\newtheorem{theorem}{Theorem}[section]
\newtheorem{lemma}[theorem]{Lemma}
\newtheorem{remark}[theorem]{Remark}
\newtheorem{corollary}[theorem]{Corollary}
\newtheorem{example}[theorem]{Example}
\newtheorem{assumption}[theorem]{Assumption}
\newcommand{\ie}{\emph{i.e.}}
\newcommand{\eg}{\emph{e.g.}}
\newcommand{\cf}{\emph{cf.}}
\newcommand{\ud}{\mathrm{d}}
\newcommand{\prob}{\mathrm{P}}
\newcommand{\expt}{\mathrm{E}}
\newcommand{\var}{\mathrm{Var}}
\newcommand{\rset}{\mathbb{R}}
\newcommand{\zset}{\mathbb{Z}}
\newcommand{\one}{\mathbf{1}}
\newcommand{\poisson}{\mathcal{P}}
\newcommand{\normal}{\mathcal{N}}
\newcommand{\binomial}{\mathcal{B}}
\newcommand{\Ordo}[1]{{\mathcal{O}}\left(#1\right)}
\newcommand{\ordo}[1]{{o}\left(#1\right)}
\newcommand{\abstr}[1]{\begin{abstract}#1\end{abstract}}
\newcommand{\thx}[1]{\thanks{#1}}
\title[Computable Weak Error Expansion for the Tau-Leap Method] {Towards
  Automatic Global Error Control: Computable Weak Error Expansion for
  the Tau-Leap Method}
\author{J. Karlsson}
\address{Applied Mathematics and Computational Sciences, KAUST, Saudi
  Arabia}
\email{jesper.karlsson@kaust.edu.sa}
\author{R. Tempone}
\address{Applied Mathematics and Computational Sciences, KAUST, Saudi Arabia}
\email{raul.tempone@kaust.edu.sa}
\subjclass[2000]{Primary: 60H10, 60H35; Secondary: 60J75, 65Y20,
  65L50}
\keywords{Tau-leap, weak approximation, reaction networks, markov
  chain, error estimation, a posteriori error estimates, backward dual
  functions}
\numberwithin{equation}{section}
\numberwithin{figure}{section}
\numberwithin{table}{section}
\begin{document}

\abstr{
  This work develops novel error expansions with computable leading
  order terms for the global weak error in the tau-leap discretization
  of pure jump processes arising in kinetic Monte Carlo models.
  Accurate computable a posteriori error approximations are the basis
  for adaptive algorithms; a fundamental tool for numerical simulation
  of both deterministic and stochastic dynamical systems. These pure
  jump processes are simulated either by the tau-leap method, or by
  exact simulation, also referred to as dynamic Monte Carlo, the
  Gillespie algorithm or the Stochastic simulation algorithm. Two types
  of estimates are presented: an \emph{a priori} estimate for the
  relative error that gives a comparison between the work for the two
  methods depending on the propensity regime, and an \emph{a posteriori}
  estimate with computable leading order term.
}


\maketitle
\setcounter{tocdepth}{1}
\tableofcontents

\section{Introduction}\label{sec:introduction}
In this work we derive a global weak error expansion with computable
leading order term for the tau-leap method.  The tau-leap method was originally
proposed by Gillespie in \cite{gillespie_tau_leap}, for approximating
homogeneous and well stirred stochastically reacting chemical systems.
In such systems different species undergo reactions at random times to
form new species or to decay. Each reaction can be modeled by a
\emph{propensity function}, directly related to the number of
particles, indicating the probability of a reaction to happen per unit
of time.  The notion \emph{well stirred} here means that the number of
reactive particle collisions are low compared to the total number of
collisions.  Depending on the number of particles in the system and
the time to next reaction the reaction process can be modeled
differently:

For all possible regimes, a homogeneous well stirred system in thermal
equilibrium can be modeled by the \emph{chemical master equation}
(CME). This ordinary differential equation describes the time
evolution of the probability of each particle configuration, \cf{}
\cite{gillespie_cme}. Since the dimension of the solution space is of
the order of all the possible configurations, the CME is in practice
impossible to solve.  On the other hand, the system can still be
simulated \emph{exactly} using the \emph{Stochastic simulation
  algorithm} (SSA), also introduced by Gillespie in
\cite{gillespie_ssa}.  The SSA numerically simulates the Markov
process described by the CME by using dynamic Monte Carlo sampling,
also referred to as \emph{kinetic Monte Carlo}. Although the SSA
generates exact path realizations for the Markov process it is only
tractable for low propensities. Indeed, since each reaction is
simulated exactly by sampling the next reaction to happen and the time
to this reaction, the total computational work becomes roughly
inversely proportional to the total propensity.  The tau-leap method,
on the other hand, approximates the SSA by evolving the chemical
system with fixed time steps, keeping the propensity fixed in each
time step, and can be seen as a forward Euler method for a stochastic
differential equation driven by Poisson random measures, \cf{}
\cite{li}.  In the limit, as the time steps go to zero, the tau-leap
solution converges to the SSA, see \cite{rathinam}.

As the number of particles in the system grows the SSA is
sometimes approximated by the \emph{chemical Langevin} diffusion
equation. Further, if both the number of particles and the total
volume of the system goes to infinity at the same speed, \ie{} the
randomness of the system becomes negligible, then the concentration of
each species over time can be modeled by a deterministic system of
ordinary differential equations, the \emph{reaction rate equations}
(RREs).
Because of different time scales in the reactions, RREs are often are
\emph{stiff}, indicating the need for stable tau-leap methods and adaptive
time-stepping.  Implicit stable tau-leap methods that deal with stiff cases
have been proposed by \cite{cao_stab, rathinam_stab}. Several authors
have discussed the importance of leap selection procedures to increase
efficiency \cite{cao_eff_step_size, gillespie_leap_size} and to avoid
negative populations \cite{anderson, cao_negative_pop,
  chatterjee_binomial, tian_burrage}.

Representing the number of particles for the species in the system by
the stochastic vector $X(t)$, the goal in this work is to approximate
the real valued quantity, $\expt [g(X(T))]$, for some given function
$g$ and initial configuration $X(0)$. We here derive an \emph{a
  posteriori} estimate for the global weak error $\expt
[g(X(T))-g(\bar X(T))]$ between exact (SSA) solution $X$ and the tau-leap
solution $\bar X$, based on an exact global error representation,
cf. Lemma \ref{lem:tau_exp_error_rep}.  The error representation uses
the value function defined by the Kolmogorov backward equation
\eqref{eq:kolmogorov}.  Also, an \emph{a priori} estimate for the
relative global weak error that is independent of the number of
particles is derived.  Both the a priori and a posteriori error
estimates are based on a continuous extension and point wise bounds of
the discretely defined value function and its derivatives. In
\cite{katsoulakis}, similar $L$-infinity bounds, exponentially
dependent on the propensity function, were derived. This work improves
those estimates showing that the value function and its derivatives
only grows polynomially with the population. This result is based on
weighted estimates and a stochastic representation of the weighted
derivatives in terms of pure jump processes with modified
propensities.

Adaptive time stepping algorithms based on computable a posteriori
error estimates are fundamental tools for numerical simulation of
both deterministic and stochastic dynamical systems. In our
case, the leading order term of the error expansion is approximated by
a discrete dual weighted propensity residual, similar to \cite{mstz3,
  mstz2} for deterministic differential equations, and \cite{mstz,
  msst, mordecki, stz} for stochastic differential equations of
diffusion and jump-diffusion type.  To the best of the authors'
knowledge there are still no results on optimal adaptive time stepping
algorithms based on a posteriori error estimates for the global weak
error in the tau-leap method.

The outline of this work is as follows: first, Section
\ref{sec:problem} presents the mathematical setting together with the
main assumptions and the relation with the Kolmogorov backward
equation upon which our results are based.  Next, Section
\ref{sec:num_approx} introduces the tau-leap method and a numerical
scheme for avoiding negative populations (following \cite{anderson}),
here called the \emph{Poisson bridge} tau-leap method.
Section \ref{sec:accuracy} presents the main results, namely an
\emph{a priori} bound on the relative error of the tau-leap method
(Theorem \ref{thm:rel_error_bnd}), and a computable \emph{a
  posteriori} error approximation using a discrete dual (Theorem
\ref{thm:aposteriori}).  
Also, from the a priori bound in Theorem \ref{thm:rel_error_bnd} a
work comparison between the tau-leap method and SSA is made.  Finally,
Section \ref{sec:examples} provides a numerical verification of our a
posteriori error estimate in Theorem \ref{thm:aposteriori}.

\section{Problem}\label{sec:problem}
We consider a well stirred system of $d$ chemical species, which
interact through $M$ chemical reaction channels.  The system is
assumed to be confined to a constant volume $\Omega$ and to be in
thermal, but not necessarily chemical, equilibrium at some constant
temperature. With $X_{t}^{(i)}$ denoting the number of molecules of
species $i$ in the system at time $t$, we want to study the evolution
of the state vector $X_t = (X_{t}^{(1)}, \ldots, X_{t}^{(d)})$, given
that the system was initially in some state $X_{t_0} = x_0$.  It is
here assumed that $X_t, x_0 \in \zset_+^d$ where $\zset_+$ denotes the
set of non-negative integers.  The goal of the computation is the
approximation of the quantity
\begin{equation*}
  \expt [ g(X_T) ],
\end{equation*}
where $g:\rset^d\to \rset$ is a given function.

Each of the $j=1,\ldots,M$ reactions is characterized by the change
\begin{equation*}
  x \to x+ \nu_j  
\end{equation*}
where $\nu_j\in \zset^d$ is a stoichiometric vector that indicates the
change in the state vector $x\in \zset_+^d$, produced by a single
firing of the reaction $j$.  Now that we have defined what happens
when each reaction takes place, we also need to indicate how often
each of those reactions may occur in time. This information is carried
by the \emph{propensity functions}, $a_j:\zset_+^d \to \rset$,
$j=1,\ldots,M$, which tell the probability of the reaction $j$
happening during the infinitesimal interval $(t,t+\ud t)$ \ie{}
\begin{equation*}
  \prob\Bigl(j \text{  fires during } (t,t+\ud t) \bigm| X_t = x\Bigr) =
  a_j(x) \ud t, \quad j=1,\ldots,M.
\end{equation*}
Usually the functions $a_j$ are just polynomials, see Remark
\ref{rem:propensity}. The compensator of our reaction
process is state dependent, and the Kolmogorov backward equation for
this pure jump process and a smooth observable $g:\rset^d\to \rset$ is
\begin{equation}\label{eq:kolmogorov}
  \begin{aligned}
    \partial_t u + \sum_{j=1}^M a_j(x) \Bigl( u(x+ \nu_j,t) - u(x,t)
    \Bigr) &= 0, && \text{ in } \zset_+^d \times [0,T),\\
    u &= g, && \text{ on } \zset_+^d \times \{T\},\\
  \end{aligned}
\end{equation}
with the corresponding value function
\begin{equation*}
  u(x,t) := \expt \bigl[ g(X_T) \bigm| X_t=x \bigr].
\end{equation*}
Here, natural boundary conditions restrict the species to the set
$\zset_+^d$, see Remark \ref{rem:bnd_val}.

Let $\mathcal{Z} := \{\nu_1,\ldots,\nu_M\}$. It is interesting to see
that the previous equation can be rewritten in the following way
\begin{equation}\label{eq:chem_equiv}
  \begin{aligned}
    \partial_t u + a_0(x)
    \int_{\mathcal{Z}}\Bigl( u(x+ z,t) - u(x,t) \Bigr) q(x,\ud z) &=
    0, && \text{ in } \zset_+^d \times [0,T),\\ 
    u(\cdot,T) &= g, && \text{ on } \zset_+^d \times \{T\},\\
  \end{aligned}
\end{equation}
\begin{equation*}
  a_0(x):= \sum_{j=1}^M a_j(x),
\end{equation*}
where the compensator measure $q(x,\ud z)$ is atomistic in the
stoichiometric vector set $\mathcal{Z}$ and such that
\begin{equation*}
  q(x,z=\nu_j) = \prob ( z=\nu_j \,|\, X_t=x ) = 
  \frac{a_j(x)}{a_0(x)}, \quad j=1,\ldots,M.  
\end{equation*}
Although \eqref{eq:kolmogorov} and \eqref{eq:chem_equiv} are
equivalent they may lead to different discretizations.  In
particular, the stochastic simulation algorithm (SSA) introduced by
Gillespie \cite{gillespie_ssa} is the natural discretization scheme
for \eqref{eq:chem_equiv}. Moreover, the corresponding error estimates
and error control algorithms behave differently.

Since \eqref{eq:kolmogorov} is driven by pure jumps with finite
activity it is in principle possible to simulate trajectories of $X_t$
exactly; the SSA does precisely that. It only requires the sample of
two random variables per time step: one to find the time of the next
reaction and another to decide which is the reaction that is firing at
that time. The drawback of this algorithm appears clearly as the sum
of the intensity of all reactions, $a_0(x)$, gets large: since all the
jump times have to be included in the time discretization the
corresponding computational work may become unaffordable. Indeed, we
have that the mean value of the number of jumps on the interval
$(t,t+\tau)$ is approximately $a_0(X_t) \tau + \ordo{\tau}$.

\begin{assumption}[Bounded population]\label{assu:boundedness}

  We here assume that species can only be transformed into other
  species or be consumed.
  This means that the state $X_t \in \zset_+$ will be bounded from
  above by a hyperplane intersecting the point $X_0$ and the
  coordinate axes, \ie{}
  \begin{equation*}
    X_t \in \Pi(X_0,n):= \{x \in \zset_+^d: n \cdot (x-X_0) \leq 0 \}.
  \end{equation*}
  for some vector $n \in \rset_+^d$ with strictly positive
  coordinates, \ie{} $n>0$.
  
  In other words, all the stoichiometric vectors should satisfy
  \begin{equation*}
  n \cdot \nu_j \le 0, \, j=1,\ldots,M.
  \end{equation*}
\end{assumption}

\begin{remark}[Preventing negative populations]\label{rem:propensity}

  To prevent the state from becoming negative after a jump we must have
  that $a_j(x)=0$ for $x+\nu_j \notin \zset_+^d$. For the same reason
  we also have that $a_j(0)=0$.  For common chemical reactions the
  propensity functions can be modeled as polynomials of the form
  \begin{equation}
    \label{eq:propensity}
    a_j(x) := c_j \prod_{i=1}^d \frac{x_i!}{(x_i+\nu_{ij}^-)!}
    \one_{(x_i + \nu_{ij}^-) \geq 0},
  \end{equation}
  which satisfies the above criteria, see
  \cite{agk,gillespie_ssa}. Here, $\nu_{ij}^- :=
  \min(\nu_{ij},0)$. From the expression \eqref{eq:propensity} it also
  follows that the propensity is monotone in $\zset_+^d$, \ie{} the
  gradient of the polynomial function $a_j$ is non-negative.
 
\end{remark}

\begin{remark}[Boundary values]\label{rem:bnd_val}

  From Remark \ref{rem:propensity} it follows that Equation
  \eqref{eq:kolmogorov} has a natural boundary condition at the
  boundaries where any component of the $x$ vector is zero.

\end{remark}


\section{The tau-leap method}\label{sec:num_approx}
To avoid the computational drawback of the SSA, \ie{} when many
reactions occur during a short time interval, the \emph{tau-leap} method
was proposed in \cite{gillespie_tau_leap}: Given a population $\bar
X_t$, and a time step $\tau>0$, the population at time $t+\tau$ is
generated by
\begin{equation}
  \label{eq:tau_leap}
  \bar X_{t+\tau} = \bar X_t + \sum_{j=1}^M \nu_j 
  \poisson_j( a_j( \bar X_t )\tau ),
\end{equation}
where $\poisson( a_j( \bar X_t )\tau )$ is a sample value from the
Poisson distribution with parameter $a_j( \bar X_t )\tau$, indicating the
sampled number of times that the reaction $j$ fires during the
$(t,t+\tau)$ interval. 

The tau-leap method \eqref{eq:tau_leap} is based on the observation
that if the propensity is constant between $t$ and $t+\tau$, the
firing probability in one reaction channel is independent of the other
reaction channels. The total number of firings in each channel is then
a Poisson distributed stochastic variable depending only on the
initial population $\bar X_t$.  Also, Equation \eqref{eq:tau_leap} is
nothing else than a forward Euler discretization of the SDE
corresponding to the Kolmogorov backward equation
\eqref{eq:kolmogorov}, (or the chemical master equation), driven by
the Poisson random measure, \cf{} \cite{li}.

In the following we let $X_t$ denote the exact process and $\bar X_t$
the tau-leap approximation.

\begin{remark}\label{rem:a_bar}
  Although the path generated by \eqref{eq:tau_leap} seems to only be
  defined for discrete time steps, it can be extended to the continuous
  time interval. Indeed, any intermediate time steps in the
  $(t,t+\tau)$ interval can be defined as a Poisson bridge with fixed
  endpoints $\bar X_t$, $\bar X_{t+\tau}$ and fixed intensity $\bar a_j :=
  a_j(\bar X_t)$.
\end{remark}

After freezing the propensity to $a(\bar X_t)$ in each time step
$[t,t+\tau]$, the error in the tau-leap method \eqref{eq:tau_leap}
comes from the variation of $a(X_s)$ for $s\in(t,t+\tau)$, where $X_s$
is the true process starting at $X_t=\bar X_t$.  To take this into
account it was in \cite{gillespie_tau_leap} proposed choosing local
time steps following the leap condition
\begin{equation}
  \label{eq:leap_condition}
  |\Delta a_j(\bar X_t)| := 
  |a_j(\bar X_{t+\tau})-a_j(\bar X_t) | \leq \epsilon a_0(\bar X_t),
\end{equation}
for a given control parameter $0 < \epsilon \ll 1$.
In order to avoid unnecessary sampling of the Poisson random variable
a pre-leap check can be done by first doing a Taylor expansion 
\begin{equation*}
  \begin{aligned}
    \Delta a_j(\bar X_t) 
    &= a_j \left( \bar X_t + 
      \sum_{i=1}^M \nu_i \poisson_i ( a_i(\bar X_t)\tau )
    \right) - a_j(\bar X_t)\\
    &\approx \nabla a_j(\bar X_t) \cdot 
    \sum_{i=1}^M \nu_i \poisson_i ( a_i(\bar X_t)\tau ),
  \end{aligned}
\end{equation*}
and approximating the mean and variance of $\Delta a_j(x)$ by
\begin{equation*}
  \frac{1}{\tau} \expt [\Delta a_j(x)] \approx \sum_{i=1}^M a_i(x)
  \left( \nabla a_j(x) \cdot \nu_i \right) =: \mu_{j}(x), \quad j = 1,\ldots,M,
\end{equation*}
and
\begin{equation*}
  \frac{1}{\tau} \var[\Delta a_j(x)] \approx \sum_{i=1}^M a_i(x) \left( \nabla
    a_j(x) \cdot \nu_i \right)^2 =:  \sigma_{j}(x)^2, \quad j = 1,\ldots,M,
\end{equation*}
respectively, see \cite{gillespie_leap_check}.
The leap size is then chosen as
\begin{equation}
  \label{eq:leap_size}
  \tau  = \min_{j=1,\ldots,M} \min \left(
    \frac{\epsilon a_0(\bar X_{t})}{|\mu_j(\bar X_{t})|}, 
    \frac{\epsilon^2 a_0(\bar X_{t})^2}{\sigma_j(\bar X_{t})^2} \right),
\end{equation}
which implies that $|\expt [\Delta a_j]|\leq \epsilon a_o$ and
$\var[\Delta a_j]\leq \epsilon^2 a_o^2$, so that the leap condition
\eqref{eq:leap_condition} holds in some statistical sense.  The leap
size control \eqref{eq:leap_size} has the advantage that it can be
computed before each step is taken; however, it relies on the \emph{ad
  hoc} parameter $\epsilon$ and only controls the local error.

Our goal is here to develop a rigorous \emph{a posteriori} global
weak error estimate for \eqref{eq:tau_leap}, with computable leading
order term, that can be used to control the error in a more systematic
way than the leap condition \eqref{eq:leap_condition}, and that can in
future work be used for efficient adaptive time stepping algorithms.
In Section \ref{sec:poisson_bridge} we present a procedure that
eliminates the possibility of negative populations $\bar X_t$,
similarly to \cite{anderson}, and in Section \ref{sec:accuracy} we
develop error estimates that are finally tested numerically in Section
\ref{sec:examples}.

\subsection{Avoiding negative population} \label{sec:poisson_bridge}
The regular tau-leap approximation \eqref{eq:tau_leap} suffers from
the undesirable property that $\bar X_t$ can become negative.  To
prevent such unphysical behavior, which is a result of the
approximation and not the process itself, the idea is to adaptively
adjust the time step $\tau$ to avoid negative populations and at the
same time leave the distribution of $\bar X_t$ unchanged. Similarly to
\cite{anderson}, this is done by post-leap checks such that if any
component of $\bar X_{t+\tau}$ becomes negative, a new step $\bar
X_{t+\tau/2}$ will be sampled using a conditional Poisson
distribution, \ie{} a \emph{Poisson bridge}. To describe this
procedure, we first introduce independent unit rate Poisson processes
$Y_j(\cdot)$, and define their corresponding \emph{internal times} as
\begin{equation*}
  \lambda_t^j := \int_0^t a_j(X_s) \, \ud s, \quad j=1,\ldots,M.
\end{equation*}
The state of the chemical system at time $t$
then satisfies
\begin{equation*}
  X_t = X_0 + \sum_{j=1}^M \nu_j Y_j ( \lambda_t^j ),
\end{equation*}
see \cite{anderson}, \ie{} each reaction channel is described by a
unit rate Poisson process with an internal time given by
$\lambda^j_t$. 
From the definition of $Y_j$ we have Poisson distributed increments
\begin{equation*}
  \begin{aligned}
    Y_j( \lambda_{t+\tau}^j ) - Y_j(
    \lambda_t^j )
    &= Y_j \left(  \lambda_t^j + \int_t^{t+\tau} a_j(X_s) \, \ud s \right) - 
    Y_j( \lambda_t^j )\\
    &= \poisson_j \left( \int_t^{t+\tau} a_j(X_s) \, \ud s \right),
  \end{aligned}
\end{equation*}
and since we have for each tau-leap step that
\begin{equation}\label{eq:delta_lambda}
  \begin{aligned}
    \int_t^{t+\tau} a_j(\bar X_s) \, \ud s = \bar{a}_j(\bar X_t) \tau
    := \Delta \lambda_t^j,
  \end{aligned}
\end{equation}
the tau-leap method can be written in
terms of increments in $Y_j$, \ie{}
\begin{equation}\label{eq:tau_lambda}
  \bar X_{t+\tau} = \bar X_t + 
  \sum_{j=1}^M \nu_j \Delta Y_j(\Delta \lambda_t^j).
\end{equation}
where
\begin{equation}\label{eq:delta_y}
  \Delta Y_j(\cdot) := \poisson_j \left( \cdot \right).
\end{equation}

As the tau-leap method steps forward in time we build up a history of
samples for the driving process $Y$, $\{(\lambda_i^j,
Y_i^j)\}_{i=0}^{n_j}$, such that $0 = \lambda_0^j < \lambda_1^j <
\ldots < \lambda_{n_j}^j$, and $0 = Y_0^j \leq Y_1^j \leq \ldots \leq
Y_{n_j}^j$, by summing increments $(\Delta \lambda_t^j, \Delta
Y_j(\Delta \lambda_t^j))$.  Starting at $X_t$ and at the last value
$(\lambda_k^j, Y_k^j)$ in the history, a regular tau-leap step samples an
increment $\Delta Y_t^j:=\Delta Y_j(\Delta \lambda_t^j)$ and saves
$(\lambda_k^j + \Delta \lambda_t^j, Y_k^j + \Delta Y_t^j)$ to the
history. Let $\binomial(n,p)$ denote the binomial distribution. As
soon as a negative population $\bar X_{t+\tau}<0$ is encountered, a new
step $\bar X_{t+\tau/2}$ is calculated by the Poisson bridge
\begin{equation*}
  \bar X_{t+\tau/2} = \bar X_t + 
  \sum_{j=1}^M \nu_j \binomial(\Delta Y_t^j, 0.5),
\end{equation*}
and $(\lambda_k^j + 0.5 \Delta \lambda_t^j, Y_k^j + \binomial(\Delta
Y_t^j, 0.5))$ is added to the saved history. If $\bar X_{t+\tau/2}$ is
non-negative we move forward along the $t$-axis, otherwise the step is
halved once again. When traversing the physical time axis $t$ we have
to check if there already exist samples to the right on the internal
time axis $\lambda$, \eg{} given the position $\lambda_k^j$ and a
future value $\lambda_{k+1}^j$ the physical time step $\tau$ must be
adjusted such that we end up on $\lambda_{k+1}^j$ for at least one $j$
and to the left of $\lambda_{k+1}^j$ for the remaining $j$. For the
reaction $j$ that ends up on $\lambda_{k+1}^j$ we use the
corresponding $Y_{k+1}^j$, and for the other reactions, a bridge
between $(\lambda_k^j, Y_k^j)$ and $(\lambda_{k+1}^j, Y_{k+1}^j)$ is
sampled. Algorithm \ref{alg:PBTL} describes in detail one step of the
Poisson bridge tau-leap method in the interval $(t,t+\tau)$.  To speed up
the algorithm we approximate, for large $np$, the binomial
distribution $\binomial(n,p)$ with the normal distribution
$\normal(np, np(1-p))$ rounded to integers and multiplied by the
indicator function with support in $[0,n]$. We here apply this approximation whenever
$np>10^4$.

\begin{algorithm}
 \caption{One step for the Poisson Bridge Tau-Leap method. Unless
   stated otherwise, it is assumed that steps in the algorithm containing the
   reaction index $j$ are performed for all reactions $j=1,\ldots,M$.}
 \label{alg:PBTL}
 \dontprintsemicolon
 \SetAlgoSkip{bigskip}
 \SetArgSty{text}
 \SetKwInOut{Input}{Input}
 \SetKwInOut{Output}{Output}
 \Input
 { 
   Initial values $(t, \bar X)$ and $(\lambda^j,
   Y^j)$. Coefficients $\nu_j$ and $a_j(x)$,
   step size $\tau$, and an ordered history
   of samples $\Lambda_j:=\{(\lambda_{i}^j, Y_{i}^j)\}_{i=0}^{n_j}$, where 
   $0 = \lambda_0^j  < \lambda_1^j < \ldots < \lambda_{n_j}^j$ and 
   $0 = Y_0^j \leq Y_1^j \leq \ldots \leq Y_{n_j}^j$.
 }
 \Output
 {
   An ordered history of samples $\Lambda_j$, the current value of
   values for $(\lambda^j, Y^j)$, and the steps $\{(t_i,\bar X_i)\}_{i=0}^m$ taken in
   the interval $[t,t+\tau]$.
 }
 \BlankLine
 
 Set local time $t_{loc} = 0$.\;
 \While{($t_{loc}<\tau$)}
 {
   Let $\bar a_j := a_j(\bar X)$, and
   let $k_j$ be the index of current $\lambda^j$ in the sample history $\Lambda_j$. 
   Set local time step $\tau_{loc} := \tau-t_{loc}$ and
   corresponding internal time step $\Delta\lambda^j := \bar a_j\tau_{loc}$.\;

   \For{($j=1,\ldots,M$)}
   {
     Save a temporary local time step $\tau_{loc}^j
     := \tau_{loc}$ for reaction $j$.\; 
     \uIf{($k_j$ is last index in $\Lambda_j$)}
     {
       Sample a new Poisson value $\Delta Y^j := \poisson( 
       \Delta\lambda^j )$, and add 
       $(\lambda_{k_j}^j+\Delta\lambda^j, Y_{k_j}^j+\Delta Y^j)$ to the history $\Lambda_j$.
     }
     \Else
     {
       \uIf{($\lambda_{k_j}^j + \Delta\lambda^j > \lambda_{k_j+1}^j$)}
       {
         Set local time step for reaction $j$:
         $\tau_{loc}^j :=  \frac{\lambda_{k_j+1}^j -
           \lambda_{k_j}^j}{\bar a_j}$.\;
         Set $\Delta\lambda^j := \lambda_{k_j+1}^j - \lambda_{k_j}^j$ and 
         $\Delta Y^j := Y_{k_j+1}^j - Y_{k_j}^j$.\;
       }
       \Else
       {
         Sample a Poisson bridge value 
         $\Delta Y^j := \binomial(Y_{k_j+1}^j - Y_{k_j}^j, \frac{
           \Delta\lambda^j}{\lambda_{k_j+1}^j-\lambda_{k_j}^j})$ 
         from the binomial distribution, and add 
         $(\lambda_{k_j}^j+\Delta\lambda^j, Y_{k_j}^j+\Delta Y^j)$ to the
         history $\Lambda_j$.
       }
     }   
   }
   Set local time step to be the
   minimum physical time to next sampled value in history: 
   $\tau_{loc} :=  \min_j \tau_{loc}^j$.\;
   \While{(any component of $\bar X+ \sum_j \nu_j \Delta Y^j$ is negative)}
   {
     Divide step size by two, \ie{} $\tau_{loc} := 0.5 \tau_{loc}$,
     and $\Delta\lambda^j := 0.5 \Delta\lambda^j$.\;

     Sample a bridge value $\Delta Y^j := \binomial( \Delta Y^j, 0.5 )$.\;

     Add $(\lambda_{k_j}^j+\Delta\lambda^j, Y_{k_j}^j+\Delta Y^j)$ to $\Lambda_j$.
   }   

   Accept new value $\bar X \leftarrow \bar X+\sum_j \nu_j \Delta Y^j$,
   increase internal time $\lambda^j \leftarrow \lambda_{k_j}^j + \Delta\lambda^j$, and
   increase local time $t_{loc} \leftarrow t_{loc} + \tau_{loc}$.\;

}
\end{algorithm}


\begin{remark}\label{rem:pre-leap}
  The post-leap check performed by the Poisson bridge tau-leap method
  in Algorithm \ref{alg:PBTL} will guarantee non-negative
  \emph{sampled} steps. There is however still a probability that
  components of the continuous tau-leap process may become negative at
  points between the sampled steps, as discussed in Remark
  \ref{rem:a_bar}.  To limit this effect we can introduce a pre-leap
  check that adjusts the time step $\tau$ such that
  \begin{equation}\label{eq:exit_prob}
    \prob \bigl(\bar X^{(i)}_{t+\tau} < 0 \bigm| \bar X_t \bigr) <
    \epsilon, \quad i=1,\ldots,d,
  \end{equation}
  for some small $\epsilon>0$, see \cite{kkst} where the exit
  probability \eqref{eq:exit_prob} is approximated by a normal
  approximation of the tau-leap step \eqref{eq:tau_leap}, leading to a
  quadratic inequality in $\tau$.

\end{remark}

\section{Accuracy and error estimation}\label{sec:accuracy}


To develop computable global error estimates for the \emph{Poisson
  bridge} tau-leap method we start with the following (non-computable)
error representation based on the backward Kolmogorov equation
\eqref{eq:kolmogorov}, along the continuous-time approximate tau-leap
paths $\bar X_t$, defined in \eqref{eq:tau_leap} and Remark
\ref{rem:a_bar}, and the difference in propensities:

\begin{lemma}[Error representation for the tau-leap method]\label{lem:tau_exp_error_rep}
  Assume that $a_j$ and $u$ are defined for $x\in\zset^d$ and that $u$
  solves the backward Kolmogorov equation \eqref{eq:kolmogorov} in
  $\zset_+^d \times [0,T]$.  For deterministic time steps we then have
  \begin{equation*}
    \begin{aligned}
      \expt \left[ g ( X_T ) - g ( \bar X_T ) \right] 
      =& \
      \sum_{j=1}^M \expt \Biggl[ \int_0^T  (a_j-\bar a_j) ( \bar X_t )       
      \Bigl( u ( \bar X_t + \nu_j, t ) - u ( \bar X_t, t ) \Bigr) \,
      \ud t \Biggr]\\
      &- \expt \left[ \int_0^T
        \phi(\bar X_t,t) \one_{\{ \bar  X_t\in\zset_-^d \}} 
       \, \ud t \right],
    \end{aligned}
  \end{equation*}
  where
  \begin{equation*}
  \phi(\bar X_t,t) := \partial_t u(\bar X_t,t) + 
  \sum_{j=1}^M \bar a_j(\bar X_t) \Bigl( u ( \bar X_t + \nu_j, t ) - u ( \bar X_t, t ) \Bigr),
  \end{equation*}
  and $\zset_-^d := \zset^d \setminus \zset_+^d$ is the set where at least one
  component is negative.

\end{lemma}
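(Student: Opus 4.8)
The plan is to apply a Dynkin--It\^o formula to the value function $u$ along the continuous-time tau-leap path, and then to exploit that $u$ solves \eqref{eq:kolmogorov} on $\zset_+^d$. First I would extend $\bar X_t$ to all $t\in[0,T]$ through the Poisson bridge of Remark \ref{rem:a_bar} and represent it by independent unit rate Poisson processes $Y_j$ run on the piecewise-linear internal clocks $\bar\lambda_t^j$, with $\ud\bar\lambda_t^j=\bar a_j(\bar X_t)\,\ud t$ as in \eqref{eq:delta_lambda}; here $\bar a_j(\bar X_t)$ is the propensity \emph{frozen} at the start of the current step, which also governs the bridge within that step. Since $\lambda\mapsto Y_j(\lambda)-\lambda$ is a unit rate martingale and each $\bar\lambda^j$ is predictable (deterministic within a step given the state at its left endpoint), the optional sampling theorem makes $\ud Y_j(\bar\lambda_t^j)-\bar a_j(\bar X_t)\,\ud t$ a (local) martingale differential.

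Next, the jump chain rule applied to $t\mapsto u(\bar X_t,t)$ gives
\begin{equation*}
  u(\bar X_T,T)-u(\bar X_0,0)
  = \int_0^T \phi(\bar X_t,t)\,\ud t
  + \sum_{j=1}^M \int_0^T \bigl(u(\bar X_{t^-}+\nu_j,t)-u(\bar X_{t^-},t)\bigr)\bigl(\ud Y_j(\bar\lambda_t^j)-\bar a_j(\bar X_t)\,\ud t\bigr),
\end{equation*}
because the deterministic part is, by the very definition of $\phi$, the drift produced by the frozen generator. Taking expectations, the stochastic integrals vanish (an integrability point addressed below), while $u(\bar X_0,0)=u(x_0,0)=\expt[g(X_T)]$ since $\bar X_0=x_0$ is deterministic, and $u(\bar X_T,T)=g(\bar X_T)$ by the terminal condition. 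Hence $\expt[g(\bar X_T)]-\expt[g(X_T)]=\expt\bigl[\int_0^T\phi(\bar X_t,t)\,\ud t\bigr]$.

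Finally I would split the integral according to whether $\bar X_t\in\zset_+^d$ or $\bar X_t\in\zset_-^d$. On $\zset_+^d$ equation \eqref{eq:kolmogorov} holds, so $\partial_t u(\bar X_t,t)=-\sum_j a_j(\bar X_t)\bigl(u(\bar X_t+\nu_j,t)-u(\bar X_t,t)\bigr)$, whence $\phi(\bar X_t,t)=\sum_j(\bar a_j-a_j)(\bar X_t)\bigl(u(\bar X_t+\nu_j,t)-u(\bar X_t,t)\bigr)$; on $\zset_-^d$ the term $\phi$ is kept untouched. Moving the overall sign across then produces the asserted identity.

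The two technical points are: (i) justifying the chain rule and the vanishing of the martingale terms, which needs $\expt\bigl[\int_0^T\bar a_j(\bar X_t)\,\bigl|u(\bar X_t+\nu_j,t)-u(\bar X_t,t)\bigr|\,\ud t\bigr]<\infty$ --- this follows from the bounded-population Assumption \ref{assu:boundedness} together with the polynomial-in-population bounds on $u$ and its first differences (the hypothesis that $u$ solves \eqref{eq:kolmogorov} already furnishes the regularity invoked here); and (ii) the treatment of the excursions of the continuous path into $\zset_-^d$, where \eqref{eq:kolmogorov} is unavailable: one must check that the Poisson-bridge extension is still driven by the frozen propensities on such excursions, so that the drift is genuinely $\phi$ there, and that the indicator $\one_{\{\bar X_t\in\zset_-^d\}}$ captures the only correction needed. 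I expect (ii), the out-of-domain bookkeeping near the boundary, to be the main obstacle; (i) is routine once the a priori bounds are available.
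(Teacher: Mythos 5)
Your proposal is correct and follows essentially the same route as the paper's proof: write $\expt[g(X_T)-g(\bar X_T)]=\expt[u(\bar X_0,0)-u(\bar X_T,T)]$, apply the Dynkin/It\^o formula along the tau-leap path with the frozen generator so that the drift is exactly $\phi$, and then use the Kolmogorov backward equation on $\zset_+^d$ (equivalently, add and subtract the true generator) while retaining $\phi$ on $\zset_-^d$. The only difference is that you spell out the martingale justification of the Dynkin step via the unit-rate Poisson representation and flag the integrability and out-of-domain issues, which the paper states without elaboration.
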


\begin{proof}
  Since $\bar X_0 = X_0$ we have
  \begin{equation}\label{eq:tau_exp_error_proof}
    \begin{aligned}
      \expt \left[ g ( X_T ) - g ( \bar X_T ) \right]
      &= \expt \left[ u (\bar X_0, 0) - u (\bar X_T, T) \right]\\
      &= \expt \left[ \int_0^T -(\partial_t + \mathcal{L}_{\bar X})
        u(\bar X_t, t) \, \ud t \right],
    \end{aligned}
  \end{equation}
  where the "infinitesimal generator" for $\bar X_t$ is defined as 
  \begin{equation*}
    \mathcal{L}_{\bar X} u(\bar X_t, t) := 
    \sum_{j=1}^M \bar a_j(\bar X_t) \bigl( u(\bar X_t+\nu_j,t)-u(\bar X_t,t) \bigr). 
  \end{equation*}
  Note the abuse of notion here: $\bar a_j$, and correspondingly
  $\mathcal{L}_{\bar X} u$, is from Remark \ref{rem:a_bar} \emph{only} defined along paths.
  Adding and subtracting
  \begin{equation*}
    \mathcal{L}_{X} u(\bar X_t, t) := 
    \sum_{j=1}^M a_j(\bar X_t) \bigl( u(\bar X_t+\nu_j,t)-u(\bar X_t,t) \bigr),
  \end{equation*}
  gives
  \begin{equation*}
    \begin{aligned}
      \expt \left[ g ( X_T ) - g ( \bar X_T ) \right] 
      =& \
      \expt \Biggl[ \int_0^T 
      \left( \mathcal{L}_{X}-\mathcal{L}_{\bar X} \right) u(\bar X_t, t)
      - \phi(\bar X_t,t) \ud t \Biggr]\\
    \end{aligned}
  \end{equation*}
  From the Kolmogorov backward equation \eqref{eq:kolmogorov} we have
  $\phi = 0$ for $\bar X_t \in \zset_+^d$ which gives Lemma
  \ref{lem:tau_exp_error_rep}.

\end{proof}

\subsection{Relative error}\label{sec:rel_error}
Assume that we have $x \in \zset_+^d$ particles and propensities
$a_j(x)$. 
Introduce the scaling $z:=x/\gamma$ where the scaling factor
$\gamma>0$ is related to the initial number of particles $X_0$, \eg{}
by taking the Euclidean norm $\gamma:=\|X_0\|$.  The relative error
can now be defined as
\begin{equation}
  \label{eq:rel_error}
  \varepsilon := \expt \bigl[ g(Z_T) - g(\bar Z_T) \bigr],
\end{equation}
where the scaled $Z_t$-process is represented by the Kolmogorov
backward equation
\begin{equation}
  \label{eq:kolmogorov_rescaled}
  \begin{aligned}
    \partial_t \tilde u + \sum_{j=1}^M \tilde a_j( z ) 
    \underbrace{
      \Bigl( \tilde u(z + \tilde \nu_j,t) - \tilde u(z,t) \Bigr) 
    }_{D_j \tilde u} &= 0, &&\text{ in } \zset_+^d/\gamma \times [0,T),\\
    \tilde u &= g, &&\text{ on } \zset_+^d/\gamma \times \{T\},\\
  \end{aligned}    
\end{equation}
and approximated by the rescaled tau-leap method
\begin{equation}
  \label{eq:tau_leap_rescaled}
  \bar Z_{t+\tau} = \bar Z_t + \sum_{j=1}^M \tilde \nu_j 
  \poisson_j( \tilde a_j( \bar Z_t )\tau ),
\end{equation}
with the scaled propensities $\tilde a_j(z):=a_j(x)=a_j(\gamma z)$ 
and jumps $\tilde \nu_j:=\nu_j/\gamma$. 

\begin{example}

  Let $g(x)=x^r:=\prod_{i=1}^d x_i^{r_i}$ and $|r| := \sum_{i=1}^d
  |r_i| \geq 1$ where $r\in\zset_+^d$ is a $d$-dimensional
  multi-index. Then
  \begin{equation*}
     \expt \bigl[ g(Z_T) - g(\bar Z_T) \bigr] = \frac{ \expt \bigl[
       g(X_T) - g(\bar X_T) \bigr] }{ g(X_0) },
  \end{equation*}
  for some $\min_i X_0^{(i)} \leq \gamma \leq \max_i X_0^{(i)}$.

\end{example}

\begin{assumption}\label{assu:basic_assumption}

  Following Assumption \ref{assu:boundedness} and Remark \ref{rem:propensity}
  we here assume that:
  \begin{enumerate}

  \item 
    The process $Z_t\in\zset_+^d/\gamma$ is bounded from above by a
    hyperplane passing through the point $Z_0$ and with normal $n>0$, \ie{}
    \begin{equation*}
      Z_t \in \tilde \Pi(Z_0,n):= \Pi(\gamma X_0,n)/\gamma = 
      \{z \in \zset_+^d/\gamma: n \cdot (z-Z_0) \leq 0 \}.
    \end{equation*}
    Consequently, the tau-leap process takes values in the larger set
    \begin{equation*}
      \bar Z_t \in \hat \Pi(Z_0,n) :=
      \{z \in \zset^d/\gamma: n \cdot (z-Z_0) \leq 0 \}.
    \end{equation*}
    
  \item \label{item:prop} The propensity $a_j:\zset^d_+\to\rset$ is
    non-negative and non-decreasing in each component.  Also, we have
    that $a_j(0)=0$, and $a_j(x)=0$ if $x+\nu_j \notin \zset_+^d$.
        
  \end{enumerate}

\end{assumption}

The goal is now to show an \emph{a priori} bound for the
discretization error $\varepsilon$ that is independent of the scaling
factor $\gamma$, see Theorem \ref{thm:rel_error_bnd}. The proof is
based on a Taylor expansion and bounds on the value function
$\tilde u$ and its derivatives. 

\begin{theorem}[Bound on relative error] \label{thm:rel_error_bnd}

  Let the process $Z_t$ and the propensities $a_j$ satisfy Assumption
  \ref{assu:basic_assumption}. Assume polynomial propensity functions
  of order $|p_j|$, with $p_j\in\zset_+^d$, and let $g \in
  C^2(\rset^d)$. Also, assume that the value function $\tilde u$
  given by \eqref{eq:kolmogorov_rescaled} and its first two spacial
  derivatives are bounded in $\tilde \Pi(Z_0,n)\times[0,T]$, independently of
  the scaling factor $\gamma$.


  For a time step $\tau=h\gamma^{-\delta}$, with $h>0$ and
  $\delta=\max_j (2|p_j|-2)$, the relative error \eqref{eq:rel_error}
  for the Poisson bridge tau-leap method is then bounded by
  \begin{equation*}
    \left| \varepsilon \right| \leq C h,
  \end{equation*}
  for some sufficiently large $\gamma$ and where the constant $C>0$ is
  independent of $h$ and the scaling factor $\gamma$.
  
\end{theorem}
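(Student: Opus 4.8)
The starting point is the error representation of Lemma~\ref{lem:tau_exp_error_rep}, applied to the rescaled backward equation \eqref{eq:kolmogorov_rescaled} and the rescaled scheme \eqref{eq:tau_leap_rescaled}. Writing $\bar Z_t$ for the continuous-time Poisson bridge interpolant and $\bar{\tilde a}_j(\bar Z_t):=\tilde a_j(\bar Z_{t_n})$ on the step $t\in[t_n,t_{n+1})$, it gives
\begin{equation*}
  \varepsilon = \sum_{j=1}^M \expt\biggl[\int_0^T (\tilde a_j-\bar{\tilde a}_j)(\bar Z_t)\,D_j\tilde u(\bar Z_t,t)\,\ud t\biggr] - \expt\biggl[\int_0^T \tilde\phi(\bar Z_t,t)\,\one_{\{\bar Z_t\in\zset_-^d/\gamma\}}\,\ud t\biggr],
\end{equation*}
with $\tilde\phi$ as in the lemma. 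The plan is to bound the two terms separately, tracking the power of $\gamma$ carried by each factor.

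For the propensity residual sum: since $g\in C^2$ and $\tilde u$ together with its first two spatial derivatives are bounded on $\tilde\Pi(Z_0,n)\times[0,T]$ uniformly in $\gamma$, and since $\tilde\nu_j=\nu_j/\gamma$, a first-order Taylor expansion gives $|D_j\tilde u(\bar Z_t,t)|=|\tilde u(\bar Z_t+\tilde\nu_j,t)-\tilde u(\bar Z_t,t)|\le C\gamma^{-1}$ (the argument staying in an $\Ordo{\gamma^{-1}}$-neighbourhood of $\tilde\Pi$, where the bounds extend by the natural boundary behaviour of Remark~\ref{rem:bnd_val}). Because $a_j$ is a polynomial of total degree $|p_j|$ and $\bar Z_t$ remains in a bounded region, the chain rule gives $|\tilde a_j(z)|=|a_j(\gamma z)|\le C\gamma^{|p_j|}$ and $|\nabla_z\tilde a_j(z)|=\gamma|\nabla_x a_j(\gamma z)|\le C\gamma^{|p_j|}$ for $\gamma$ large; hence, by the mean value theorem along the segment from $\bar Z_{t_n}$ to $\bar Z_t$,
\begin{equation*}
  |(\tilde a_j-\bar{\tilde a}_j)(\bar Z_t)| = |\tilde a_j(\bar Z_t)-\tilde a_j(\bar Z_{t_n})| \le C\gamma^{|p_j|}\,|\bar Z_t-\bar Z_{t_n}|.
\end{equation*}
Conditioning on $\mathcal F_{t_n}$, the increment $\bar Z_t-\bar Z_{t_n}$ is (a Poisson bridge built from) $\sum_i\tilde\nu_i\poisson_i(\tilde a_i(\bar Z_{t_n})(t-t_n))$, so $\expt[\,|\bar Z_t-\bar Z_{t_n}|\mid\mathcal F_{t_n}\,]\le C\gamma^{-1}\sum_i\gamma^{|p_i|}(t-t_n)$. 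Multiplying the three estimates, taking expectations, integrating over a step and summing the $T/\tau$ steps yields a bound of order $T\tau\sum_{i,j}\gamma^{|p_i|+|p_j|-2}\le CT\tau\gamma^{2\max_k|p_k|-2}$; with $\tau=h\gamma^{-\delta}$ and $\delta=2\max_k|p_k|-2$ the powers of $\gamma$ cancel and this term is $\le Ch$, uniformly in $\gamma$.

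For the negative-population correction the main work is to control $\expt[\int_0^T\tilde\phi(\bar Z_t,t)\one_{\{\bar Z_t\in\zset_-^d/\gamma\}}\,\ud t]$. From \eqref{eq:kolmogorov_rescaled} one has $\partial_t\tilde u=-\sum_j\tilde a_j D_j\tilde u$, so the same polynomial and $D_j\tilde u$ bounds give $|\tilde\phi(\bar Z_t,t)|\le C\gamma^{\max_k|p_k|-1}$, and it remains to show $\int_0^T\prob(\bar Z_t\in\zset_-^d/\gamma)\,\ud t$ is of order $h\gamma^{1-\max_k|p_k|}$ or smaller. The Poisson bridge post-leap checks of Algorithm~\ref{alg:PBTL} force all \emph{sampled} states to be non-negative, and between samples $\bar Z_t$ is a Poisson bridge whose coordinates can enter $\zset_-^d/\gamma$ only through a short, $\Ordo{\gamma^{-1}}$-scale excursion; combining a quantitative excursion probability for such bridges with the optional pre-leap check \eqref{eq:exit_prob} of Remark~\ref{rem:pre-leap} (which may be needed to force this probability below $\epsilon\tau$ per step once $\gamma$ is large) makes this term $\Ordo{h}$ as well. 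Adding the two contributions gives $|\varepsilon|\le Ch$ with $C$ independent of $h$ and $\gamma$.

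I expect this last step to be the principal obstacle: unlike the propensity residual, which reduces to a clean Taylor-and-moment estimate, it needs a genuinely quantitative bound on how often, and how far, the continuous-time tau-leap interpolant leaves $\zset_+^d/\gamma$. This is precisely where the hypothesis that $\gamma$ be sufficiently large and the precise Poisson-bridge construction of Section~\ref{sec:poisson_bridge} are used, and where a less careful argument would only give a bound that degrades with $\gamma$.
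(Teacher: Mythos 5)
Your treatment of the propensity-residual term is correct and follows essentially the paper's route: the same error representation from Lemma \ref{lem:tau_exp_error_rep}, the same $\gamma$-power counting ($|D_j\tilde u|\le C\gamma^{-1}$ from the uniform derivative bound, $|\nabla_z\tilde a_j|\le C\gamma^{|p_j|}$, first moment of the Poisson increment of order $\gamma^{|p_i|-1}(t-t_n)$), and the same cancellation via $\delta=2\max_k|p_k|-2$. You use a single mean-value bound where the paper carries out the full multi-index Taylor expansion of $\tilde a_j$ together with higher Poisson moments $\expt[(\bar Z_t-\bar Z_{t_n})^{\alpha_j}]\sim\sum_\beta C_\beta\tilde a_j^{\beta}\tau^{\beta}|\tilde\nu^{\alpha_j}|$; for a pure a priori bound on a bounded region your shortcut is legitimate and arrives at the same leading order.

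The genuine gap is in the negative-population term, and you correctly sense where it is but do not close it. You reduce the problem to showing $\int_0^T\prob(\bar Z_t\in\zset_-^d/\gamma)\,\ud t=\Ordo{h\gamma^{1-\max_k|p_k|}}$ and then appeal to the post-leap structure of Algorithm \ref{alg:PBTL} and to the pre-leap check \eqref{eq:exit_prob} of Remark \ref{rem:pre-leap}. Neither is available: the pre-leap check is an optional modification that is not among the hypotheses of the theorem, and even granting it, the condition $\prob(\bar X^{(i)}_{t+\tau}<0\mid\bar X_t)<\epsilon$ gives a fixed tolerance $\epsilon$, not the $\gamma$-dependent rate you need. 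The paper instead closes this step with a direct, self-contained Poisson tail estimate that uses no leap checks at all: neglecting positive jumps one has $\bar Z^{(i)}_t\ge\bar Z^{(i)}_{t_n}+\tilde\nu^{(i)}\poisson(\tilde a_0(\bar Z_{t_n})\tau)$, whence
\begin{equation*}
  \prob\bigl(\bar Z^{(i)}_t<0\bigm|\bar Z_{t_n}\bigr)\le
  \prob\Bigl(\poisson(\tilde a_0(\bar Z_{t_n})\tau)>q\Bigr)\le
  \frac{\bigl(\tilde a_0(\bar Z_{t_n})\tau\bigr)^{\lceil q\rceil}}{\lceil q\rceil!},
  \qquad q=-\bar Z^{(i)}_{t_n}/\tilde\nu^{(i)},
\end{equation*}
together with the observation that the probability vanishes when $\bar Z^{(i)}_{t_n}=0$ because then $a_j(\bar Z_{t_n})=0$ for every reaction with $\tilde\nu_j^{(i)}<0$. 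Since $q$ scales like $\gamma$ and $\tilde a_0\tau\sim h\gamma^{2-\max_k|p_k|}$, the factorial denominator makes this term decay faster than any power of $\gamma$, which is what actually absorbs the $\gamma^{\max_k|p_k|-1}$ prefactor on $\tilde\phi$. Without an estimate of this quantitative type your argument only establishes the first half of the bound.
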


\begin{proof}

  Throughout the proof $C$ will denote a non-negative constant value, not
  necessarily the same at each instance.  
  To show the theorem we note that from the error
  representation in Lemma \ref{lem:tau_exp_error_rep},  
  the relative error can be expressed as 
  \begin{equation}\label{eq:tau_exp_error_rep} 
    \begin{aligned}
      \varepsilon =& 
      \sum_{j=1}^M \sum_{n=0}^{N-1} \int_{t_{n}}^{t_{n+1}} 
      \underbrace{
        \expt \Bigl[
        \bigl( \tilde a_j( \bar{Z}_t ) - \tilde a_j( \bar{Z}_{t_n} ) \bigr)
        D_j \tilde u( \bar{Z}_t, t )
        \bigm| \bar{Z}_{t_n} \Bigr]
      }_{I_{j,n}}
      \ \ud t \\
      &- \sum_{n=0}^{N-1} \int_{t_{n}}^{t_{n+1}}  \expt \Bigl[ 
        \left( \Lambda_n \tilde u( \bar{Z}_t, t ) \right)
        \one_{\{ \bar  {Z}_t \in \zset_-^d/\gamma \}} 
        \bigm| \bar{Z}_{t_n} \Bigr]
        \ud t,
    \end{aligned}    
  \end{equation}
  where
  \begin{equation*}
    \Lambda_n := 
    \partial_t + \sum_{j=1}^M \tilde a_j(\bar{Z}_{t_n}) D_j.
  \end{equation*}

  For the first term in \eqref{eq:tau_exp_error_rep},
  Taylor expanding the propensity $\tilde a$ around $\bar{Z}_{t_n}$ gives
  \begin{equation*}
    I_{j,n} =
    \sum_{|\alpha_j| = 1}^{|p_j|} 
    \frac{ \tilde a_j^{(\alpha_j)}( \bar{Z}_{t_n} ) }{ \alpha_j! }
    \underbrace{
      \expt \Bigl[
      ( \bar{Z}_{t}-\bar{Z}_{t_n} )^{\alpha_j}      
      D_j \tilde u( \bar Z_t, t )
      \Bigm| \bar{Z}_{t_n} \Bigr],
    }_{\bar I_{\alpha_j,n}}
  \end{equation*}
  where $\alpha_j \in \zset_+^d$.

  According to Lemma \ref{lem:cont_ext} the value function and its two
  first spacial derivatives can be continuously extended to
  $\rset^d\times[0,T]$. The mean value theorem then yields
  \begin{equation*}
    \begin{aligned}
      \bar I_{\alpha_j,n}
      =&\
      \expt \biggl[
      ( \bar{Z}_{t}-\bar{Z}_{t_n} )^{\alpha_j}      
      \Bigl( \tilde u( \bar Z_t + \tilde \nu_j, t ) - 
      \tilde u( \bar{Z}_t, t ) \Bigr)
      \biggm| \bar{Z}_{t_n} \biggr]\\
      =&\
      \expt \biggl[
      ( \bar{Z}_{t}-\bar{Z}_{t_n} )^{\alpha_j}      
      \biggl( 
      \partial_z \tilde u( \bar{Z}_t, t ) \tilde \nu_j+ 
      \frac{1}{2} \tilde \nu_j^T \partial_z^2 \tilde u( \xi_1, t ) \tilde \nu_j
      \biggr)
      \biggm| \bar{Z}_{t_n} \biggr]\\
      =&\
      \expt \biggl[
      ( \bar{Z}_{t}-\bar{Z}_{t_n} )^{\alpha_j}      
      \partial_z \tilde u( \bar{Z}_{t_n}, s )\tilde \nu_j
      \biggm| \bar{Z}_{t_n} \biggr]\\
      &+ 
      \frac{1}{2}
      \expt \biggl[
      ( \bar{Z}_{t}-\bar{Z}_{t_n} )^{\alpha_j}      
      ( \bar{Z}_{t}-\bar{Z}_{t_n} )
      \partial_z^2 \tilde u( \xi_2, t ) \tilde \nu_j
      \biggm| \bar{Z}_{t_n} \biggr]\\
      &+ 
      \frac{1}{2}
      \expt \biggl[
      \tilde \nu_j^T
      ( \bar{Z}_{t}-\bar{Z}_{t_n} )^{\alpha_j}      
      \partial_z^2 \tilde u( \xi_1, t ) \tilde \nu_j
      \biggm| \bar{Z}_{t_n} \biggr]  \\
      =&\
      \bar I^\mathrm{I}_{\alpha_j,n} + 
      \bar I^\mathrm{II}_{\alpha_j,n} + 
      \bar I^\mathrm{III}_{\alpha_j,n},
    \end{aligned}    
  \end{equation*}
  for $\xi_1 = \alpha\bar{Z}_t + (1-\alpha)\left( \bar{Z}_t +\tilde\nu_j
  \right)$ for some $\alpha \in [0, 1]$ and $\xi_2 = \beta
  \bar{Z}_{t_n} + (1-\beta) \bar{Z}_{t}$ for some $\beta\in[0,1]$.
  
  From the tau-leap approximation of the process $Z_t$ we have
  \begin{equation*}
    \begin{aligned}
      \expt \Bigl[
      \left|\left( \bar{Z}_{t}-\bar{Z}_{t_n} \right)^{\alpha_j}\right|
      \Bigm| \bar{Z}_{t_n} \Bigr] 
      =&\
      \sum_{j=1}^M 
      \expt \Bigl[
      \poisson_j\left( \tilde a_j( \bar Z_{t_n} )(t-t_n) \right)^{|\alpha_j|}
      \bigl|\tilde \nu_j^{\alpha_j}\bigr| 
      \Bigm| \bar{Z}_{t_n} \Bigr]\\
      =&\
      \sum_{j=1}^M
      \sum_{\beta_j=1}^{|\alpha_j|}
      C_{\beta_j} \tilde a_j( \bar{Z}_{t_n})^{\beta_j} (t-t_n)^{\beta_j}
      \bigl|\tilde \nu_j^{\alpha_j}\bigr|\\
      \leq&\
      \sum_{j=1}^M
      \sum_{\beta_j=1}^{|\alpha_j|}
      C_{\beta_j} \tilde a_j( \bar{Z}_{t_n})^{\beta_j} \tau^{\beta_j}
      \bigl|\tilde \nu_j^{\alpha_j}\bigr|,
    \end{aligned}
  \end{equation*}
  for some positive constants $C_{\beta_j}$ that do not depend on
  $\gamma$.  Choosing $\tau=h\gamma^{-\delta}$ for $h,\delta>0$, and
  without loss of generality assuming that the propensity is a
  monomial $a_j(x) = Cx^{p_j}$ with $C>0$, $p_j\in\zset_+^d$ and
  $|p_j| \geq 1$, then
  \begin{equation*}
    \begin{aligned}
      \tilde a_j(\bar{Z}_{t_n})^{\beta_j} \tau^{\beta_j} \bigl|\tilde \nu_j^{\alpha_j}\bigr|
      =&\
      a_j(\gamma \bar{Z}_{t_n})^{\beta_j}
      \tau^{\beta_j} \bigl|\tilde \nu_j^{\alpha_j}\bigr|
      \\
      =&\
      a_j(\gamma \bar{Z}_{t_n})^{\beta_j} 
      h^{\beta_j} \bigl|\nu_j^{\alpha_j}\bigr| \gamma^{-\delta {\beta_j} - |\alpha_j|} \\
      \leq&\
      C \bar{Z}_{t_n}^{p_j {\beta_j}}
      h^{\beta_j} \bigl|\nu_j^{\alpha_j}\bigr| \gamma^{\beta_j (|p_j|-\delta)-|\alpha_j|}. \\ 
    \end{aligned}
  \end{equation*}

  Using the continuous extension in Remark \ref{rem:neg_ext} and
  assuming the value function $\tilde u$ and its first two derivatives
  are bounded independently of $\gamma$ inside $\tilde
  \Pi(Z_0,n)\times[0,T]$, \ie{} neglecting the probability terms in
  the growth condition in Lemma \ref{lem:growth}, leads to continuity
  and boundedness in the extended domain $\{z \in \rset^d: n \cdot
  (z-Z_0) \leq 0 \}$.
  Together with $\delta = \max_j (2|p_j|-2)$ and $h<1$, this gives
  \begin{equation*}
    \begin{aligned}
      \left|\sum_{|\alpha_j| = 1}^{|p_j|} 
      \frac{ \tilde a_j^{(\alpha_j)}( \bar{Z}_{t_n} ) }{ \alpha_j! }
      \bar I^\mathrm{I}_{\alpha_j,n} \right|
      &\leq
      C \sum_{|\alpha_j|=1}^{|p_j|} \sum_{\beta_j=1}^{|\alpha_j|} 
      \gamma^{|p_j|-|\alpha_j|+\beta_j(|p_j|-\delta)-1} h^{\beta_j}\\
      &\leq
      C \sum_{|\alpha_j|=1}^{|p_j|} \sum_{\beta_j=1}^{|\alpha_j|} 
      \gamma^{\max_j(|p_j|)-2+\beta_j(\max_j(|p_j|)-\delta)} h\\      
      &\leq
      C \sum_{|\alpha_j|=1}^{|p_j|} \sum_{\beta_j=1}^{|\alpha_j|} 
      \gamma^{\max_j(|p_j|)-2+(2-\max_j(|p_j|))} h\\      
      &\leq
      C h,\\      
    \end{aligned}
  \end{equation*}
  for $\bar{Z}_{t_n} \in \tilde \Pi(Z_0,n)$.  

  Performing the same analysis on the error contribution from the
  terms $\bar I^\mathrm{II}_{\alpha_j,n}$ and $\bar
  I^\mathrm{III}_{\alpha_j,n}$ gives the estimate in Theorem
  \ref{thm:rel_error_bnd}. Note that the additional $(\bar Z_t - \bar
  Z_{t_n})$ term in $\bar I^\mathrm{II}_{\alpha_j,n}$ and the extra
  $\tilde \nu_j$ in $\bar I^\mathrm{III}_{\alpha_j,n}$ yields that
  $\bar I^\mathrm{I}_{\alpha_j,n}=\Ordo{1}$, $\bar
  I^\mathrm{II}_{\alpha_j,n}=\Ordo{\gamma^{-1}}$ and $\bar
  I^\mathrm{III}_{\alpha_j,n}=\Ordo{\gamma^{-1}}$, so for large
  $\gamma$ the first term in the Taylor expansion is dominant.

  For the second term in \eqref{eq:tau_exp_error_rep} we have that
  since the value function and its derivatives are continuously
  extended and bounded in $\hat \Pi(Z_0,n)$
  \begin{equation*}
    \Lambda_n \tilde u(\bar{Z}_t, t) = \left( \partial_t +
      \sum_{j=1}^M \tilde a_j(\bar{Z}_{t_n}) D_j \right) \tilde
    u(\bar{Z}_t, t) \leq  \gamma^{-1} \sum_{j=1}^M C_j \tilde a_j(\bar{Z}_{t_n}).
  \end{equation*}
  and thus
  \begin{equation}\label{eq:remainder}
    \begin{aligned}
      \expt \left[ \Lambda_n \tilde u(
        \bar{Z}_t, t ) \one_{\{ \bar {Z}_t \in \zset_-^d/\gamma \}}
        \bigm| \bar Z_{t_n}\right]
      &\leq
      \gamma^{-1}
      \expt \left[ \one_{\{ \bar {Z}_t \in \zset_-^d/\gamma \}}\bigm| \bar Z_{t_n}
      \right]
      \sum_{j=1}^M C_j \tilde a_j(\bar{Z}_{t_n})\\
      &= 
      \gamma^{-1}
      \prob \left( \bar {Z}_t \in \zset_-^d/\gamma\bigm| \bar Z_{t_n}
      \right) \sum_{j=1}^M C_j \tilde a_j(\bar{Z}_{t_n})
      \\
      &\leq
      \gamma^{-1}
      \sum_{i=1}^d \prob \left( \bar {Z}_t^{(i)} < 0 \bigm| \bar
        Z_{t_n}\right)
      \sum_{j=1}^M C_j \tilde a_j(\bar{Z}_{t_n}).      
    \end{aligned}
  \end{equation}

  Neglecting positive jumps we obtain
  \begin{equation*}
    \begin{aligned}
      \bar Z_{t}^{(i)}
      &= \bar Z_{t_n}^{(i)} + \sum_{j=1}^M \tilde \nu_j^{(i)} 
      \poisson_j( \tilde a_j( \bar Z_{t_n} )(t-t_n) )\\
      &\geq
      \bar Z_{t_n}^{(i)} + \tilde \nu^{(i)}
      \poisson( \tilde a_0( \bar Z_{t_n} )\tau ) =: \hat Z_t^{(i)}, \quad t\in(t_n,t_{n+1}),
    \end{aligned}
  \end{equation*}
  where $\tilde \nu^{(i)} := \min_j \{\min(\tilde \nu_j^{(i)},0)\}
  <0$, and $\tilde a_0:=\sum_{j=1}^M \tilde a_j$.  For $\bar
  {Z}_{t_n}^{(i)}>0$ the probability of negative populations in
  \eqref{eq:remainder} can then be approximated by
  \begin{equation}
    \label{eq:prob}
    \begin{aligned}
      \prob \left( \bar {Z}_t^{(i)} <0 \bigm| \bar Z_{t_n} \right)
      &\leq
      \prob \left( \hat {Z}_t^{(i)} <0 \bigm| \bar Z_{t_n} \right)\\
      &\leq \prob \left( \poisson( \tilde a_0( \bar Z_{t_n} )\tau ) >
        \underbrace{-\bar Z_{t_n}^{(i)}/\tilde\nu^{(i)}}_{=:q>0}
        \bigm| \bar
        Z_{t_n} \right)\\
      &= \sum_{k=\lceil q \rceil}^\infty \frac{\left(\tilde a_0( \bar
          Z_{t_n} )\tau\right)^k}{k!}
      e^{-\tilde a_0( \bar Z_{t_n} )\tau}\\
      &= \sum_{k=0}^\infty \frac{\left(\tilde a_0( \bar Z_{t_n}
          )\tau\right)^{k+\lceil q \rceil}}{\left(k+\lceil q \rceil\right)!}
      e^{-\tilde a_0( \bar Z_{t_n} )\tau}\\
      &< \sum_{k=0}^\infty \frac{\left(\tilde a_0( \bar Z_{t_n}
          )\tau\right)^{\lceil q \rceil}}{\lceil q \rceil !}
      \frac{\left(\tilde a_0( \bar Z_{t_n} )\tau\right)^k}{k!}
      e^{-\tilde a_0( \bar Z_{t_n} )\tau}\\
      &\leq \frac{\left(\tilde a_0( \bar Z_{t_n}
          )\tau\right)^{C\gamma}}{(C\gamma)!},
    \end{aligned}
  \end{equation}
  where $\lceil\cdot\rceil$ denotes the ceiling and $C$ is independent
  of $\gamma$. For $\bar {Z}_{t_n}^{(i)}=0$, the probability $\prob
  \left( \bar {Z}_t^{(i)} <0 \bigm| \bar Z_{t_n} \right)$ is zero
  since $a_j(\bar {Z}_{t_n})=0$ if $\tilde \nu_j^{(i)}<0$.
  
  Combining \eqref{eq:remainder} and \eqref{eq:prob}, and using
  $a_0(x) \leq C x^{\max_j |p_j|}$ and $\tau=h\gamma^{2-2\max_j|p_j|}$ gives
  \begin{equation}
    \begin{aligned}
      \expt \left[ \Lambda_n \tilde u(
        \bar{Z}_t, t ) \one_{\{ \bar {Z}_t \in \zset_-^d/\gamma \}} \right]
      &\leq
      C \gamma^{-1}
      \tilde a_0(\bar{Z}_{t_n})
      \frac{\left(a_0( \bar Z_{t_n}
          )\tau\right)^{C\gamma}}{(C\gamma)!}\\
      &\leq
      C \gamma^{\max_j |p_j|-1}
      \frac{\left( C h \gamma^{2-\max_j |p_j|}
        \right)^{C\gamma}}{(C\gamma)!}\\
      &\approx
      C \gamma^{\max_j |p_j|-1}
      \left( C h \gamma^{1-\max_j |p_j|} \right)^{C\gamma},
    \end{aligned}
  \end{equation}
  so the remainder term \eqref{eq:remainder} can be neglected.

\end{proof}


In the theorem above we used that the relative value function $\tilde
u$ defined by \eqref{eq:kolmogorov_rescaled} and its derivatives can
be continuously extended to $\rset^d \times [0,T]$, and are bounded
independently of $\gamma$ in the domain $\hat \Pi(Z_0,n)\times[0,T]$.
In the remaining part of this section we will motivate why those
assumptions make sense.

\begin{lemma}[Extension of the value function and its derivatives]\label{lem:cont_ext}

  Assume that the process $Z_t$ and the propensities $a_j$ satisfy
  Assumption \ref{assu:basic_assumption}, and that $a_j$ and $g$ are
  $C^2$ on $\rset^d$.  The solution $\tilde u$ to the Kolmogorov
  backward equation \eqref{eq:kolmogorov_rescaled} is then locally
  bounded and smooth in time on $\zset_+^d/\gamma \times [0,T]$ and
  can be continuously extended to $\rset^d \times [0,T]$. Also, the
  derivatives $\partial_z \tilde u$ and $\partial_z^2 \tilde u$ are
  locally bounded and smooth in time on the lattice $\zset_+^d/\gamma$
  and can be continuously extended to $\rset^d \times [0,T]$.

\end{lemma}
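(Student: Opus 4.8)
The plan is to build the value function explicitly from its probabilistic representation $\tilde u(z,t) = \expt[g(Z_T) \mid Z_t = z]$ and to propagate regularity from the terminal data $g$ and the propensities $\tilde a_j$ through the dynamics. First I would observe that for each fixed lattice point $z \in \zset_+^d/\gamma$ the backward equation \eqref{eq:kolmogorov_rescaled} is, in the time variable, simply a linear ODE system: since the process $Z_t$ stays in the finite set $\tilde\Pi(Z_0,n)$ (Assumption \ref{assu:basic_assumption}, item~1), the sum $\sum_j \tilde a_j(z) D_j \tilde u(z,t)$ couples only finitely many lattice sites, and $\tilde u(\cdot,t)$ solves a finite-dimensional linear constant-in-$z$, time-autonomous ODE with terminal condition $g$. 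Standard ODE theory then gives existence, uniqueness, local boundedness, and $C^\infty$ (indeed analytic) dependence on $t$ on $\zset_+^d/\gamma \times [0,T]$; boundedness in fact follows from $|\tilde u(z,t)| \le \max_{\tilde\Pi} |g|$ by the conditional-expectation representation and the boundedness of the state space.

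Next I would treat the spatial differences. The key point is that $\partial_z \tilde u$ and $\partial_z^2 \tilde u$ (which on the lattice are really finite differences, but the statement asks for a genuine continuous extension to $\rset^d$) satisfy their own backward Kolmogorov-type equations with modified propensities. Concretely, differentiating \eqref{eq:kolmogorov_rescaled} formally in $z$, one gets that $v := \partial_{z_k} \tilde u$ solves
\begin{equation*}
  \partial_t v + \sum_{j=1}^M \tilde a_j(z)\bigl(v(z+\tilde\nu_j,t) - v(z,t)\bigr)
  = -\sum_{j=1}^M \partial_{z_k}\tilde a_j(z)\, D_j \tilde u(z,t),
\end{equation*}
with terminal data $\partial_{z_k} g$; the right-hand side is a known, bounded forcing term once $\tilde u$ itself is controlled, and the same linear-ODE argument as above yields smoothness in $t$ and local boundedness of $v$ on the lattice. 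Iterating once more handles $\partial_z^2 \tilde u$, with an extra forcing term involving $\partial_z \tilde u$ and $\partial_z^2 \tilde a_j$; here one uses that $g \in C^2$ and $a_j \in C^2$ on $\rset^d$ so that all the forcing terms make sense. This is essentially the ``stochastic representation of the weighted derivatives in terms of pure jump processes with modified propensities'' alluded to in the introduction, and I expect the rigorous version (the analogue of a Feynman--Kac / variation-of-constants formula for these difference operators) to be the step requiring the most care.

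Finally, for the continuous extension off the lattice: since $\tilde a_j$, its first two derivatives, and $g$ are defined and $C^2$ on all of $\rset^d$, the same backward equation \eqref{eq:kolmogorov_rescaled} can be posed on the shifted lattice $z_0 + \zset_+^d/\gamma$ for \emph{any} base point $z_0 \in [0,1/\gamma)^d$, and each such problem has a unique bounded solution by the argument above; patching these together over all base points $z_0$ defines $\tilde u$ on $\rset^d \times [0,T]$, and one checks continuity in $z$ by a Gr\"onwall-type estimate in $t$ together with continuity of the data (the solution depends continuously on the parameters $\tilde a_j(z_0 + \cdot)$ and terminal values $g(z_0 + \cdot)$, which vary continuously with $z_0$). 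The same construction applied to the derivative equations above extends $\partial_z \tilde u$ and $\partial_z^2 \tilde u$ continuously to $\rset^d \times [0,T]$, and consistency of the extensions (that the extended $\partial_z \tilde u$ really is the $z$-derivative of the extended $\tilde u$, at least in the finite-difference sense the proof of Theorem \ref{thm:rel_error_bnd} uses) follows because differentiation and the patching commute. I would remark that the quantitative, $\gamma$-uniform bounds are deferred to Lemma \ref{lem:growth}; here the claim is only qualitative local boundedness and continuity, which is exactly what the mean-value-theorem steps in the proof of Theorem \ref{thm:rel_error_bnd} invoke.
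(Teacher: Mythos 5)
Your proposal is correct and follows essentially the same route as the paper: interpret the backward equation on each (shifted) lattice as a finite-dimensional linear ODE system with a unique bounded, smooth-in-time solution, derive the forced backward equations for $\partial_z \tilde u$ and $\partial_z^2 \tilde u$ (these are exactly \eqref{eq:kolmogorov_rescaled2}--\eqref{eq:kolmogorov_rescaled3}), and obtain the extension to $\rset^d\times[0,T]$ by solving on the shifted lattices $(\zset_+^d+\epsilon)/\gamma$. Your added remarks on continuity in the shift parameter via continuous dependence of the ODE solution on its coefficients, and on consistency of the extended derivatives, fill in details the paper leaves implicit.
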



\begin{proof}

  First, we continuously extend $\tilde a_j$ such that it is
  non-negative, monotone in each component and $C^2$ on $\rset^d$, see
  \eg{} Example \ref{ex:propensity_extension1} and
  \ref{ex:propensity_extension2} below.  Given such an extension of
  the propensity, the value function $\tilde u$ can be continuously
  extended to the domain $\rset^d \times [0,T]$ by solving the
  Kolmogorov backward equation \eqref{eq:kolmogorov_rescaled} on the
  shifted lattice $(\zset_+^d+\epsilon)/\gamma \times [0,T]$,
  $\epsilon \in \rset^d$.

  Note that, for any point $Z_t\in\zset^d_+/\gamma$ we know that
  $Z_s\in\tilde \Pi(Z_t,n)$ for $s \geq t$, and
  \eqref{eq:kolmogorov_rescaled} can be understood as a linear constant
  coefficient system of ordinary differential equations on a finite
  lattice, with a unique bounded smooth solution $\tilde u$.


  

  The derivatives $\tilde v:=\partial_z \tilde u \in \rset^d$ and 
  $\tilde w:=\partial^2_z \tilde u \in \rset^{d \times d}$ can be defined on 
  $\zset_+^d/\gamma \times [0,T]$ and satisfy the equations
  \begin{equation}
    \label{eq:kolmogorov_rescaled2}
    \begin{aligned}
      \partial_t \tilde v + \sum_{j=1}^M \tilde a_j(z) D_j \tilde v
        &= -\sum_{j=1}^M \nabla \tilde a_j(z) D_j \tilde u, &&\text{ in }
        \zset_+^d/\gamma \times [0,T),\\ 
      \tilde v &= \nabla g, &&\text{ in } \zset_+^d/\gamma \times \{T\},\\
    \end{aligned}
  \end{equation}
  and
  \begin{equation}
    \label{eq:kolmogorov_rescaled3}
    \begin{aligned}
      \partial_t \tilde w + \sum_{j=1}^M \tilde a_j(z) D_j \tilde w &=\\ 
      \sum_{j=1}^M \Bigl(
      -2 \nabla \tilde a_j(z) &\cdot D_j \tilde v - \nabla^2 \tilde a_j(z) D_j \tilde u
      \Bigr), &&\text{ in } \zset_+^d/\gamma \times [0,T),\\
      \tilde w &= \nabla^2 g,&&\text{ in } \zset_+^d/\gamma \times \{T\}.\\
    \end{aligned}
  \end{equation}
  Using that $g\in C^2(\rset^d)$ and that $\tilde u$ is locally bounded and
  smooth, 
  we can by the same argument as for $\tilde u$ above see that $\tilde
  v$ and $\tilde w$ are locally bounded and smooth in time.

  By shifting the grid in the same manner as
  was done for $\tilde u$, the solutions $\tilde v$ and $\tilde w$ can
  be continuously extended to
  $\rset^d\times[0,T]$.

\end{proof}

\begin{remark}\label{rem:neg_ext}
  
  In addition to the above lemma we can also continuously extend the
  value function $\tilde u(z,\cdot)$ to real negative values $z\in\rset_-^d$ such that
  $\tilde u, \partial_z \tilde u$ and $\partial_z^2 \tilde u$ vanish
  for $z_i<-l<0$. This extension is used for the Taylor expansion in
  Lemma \ref{thm:rel_error_bnd}.

  Without loss of generality consider the
  extension to the domain $\rset_-^2$, with subdomains $A:=\{z \in
  (-l,0)\times(0,\infty)\}$, $B:=\{z\in (-l,0)^2 \}$ and $C:=\{z \in
  (0,\infty)\times(-l,0)\}$.

  First let $\tilde u(z,\cdot)=0$ on $z\in\rset_-^2\setminus \{A\cup
  B\cup C\}$. Next, extend the value function to the domain $A$
  by third degree polynomials in the $z_1$ direction, for all
  $z_2\geq0$, matching $\tilde u$, $\partial_{z_1} \tilde u$ and
  $\partial_{z_1}^2 \tilde u$ on $z_1=-L$ and $z_1=0$. Similarly, fit
  polynomials in the $z_2$ direction in domain $C$. In the domain $B$
  a $C^2$-extension can be made by transfinite interpolation using
  $\tilde u$, $\partial_{z}$ and $\partial_{z}^2 \tilde u$ on
  $\partial B$, see \eg{} \cite{barnhill, gordon, worsey2}.

\end{remark}


\begin{example}\label{ex:propensity_extension1}

  A natural extension to the propensity function $a_j(x):=x$, $x\in\zset$, is
  \begin{equation*}
    a_j(x) :=  
    \begin{cases}
      0, & x < 0,\\
      \hat a_j(x), & 0 \leq x \leq 1,\\
      x, & x > 1,
    \end{cases}
  \end{equation*}
  for $x\in\rset$, where $\hat a_j$ is a positive and
  monotone $C^2$ function chosen such that $a_j\in C^2(\rset)$, \eg{}
  $\hat a_j(x) := 6x^3 - 8x^4 + 3x^5$, see Figure \ref{fig:propensity}.

\end{example}

\begin{example}\label{ex:propensity_extension2}

  The propensity function $a(x):=x(x-1)$ is negative in $(0,1)$. A
  natural extension is thus to let 
  \begin{equation*}
    a(x) :=
    \begin{cases}
      0, & x < 1,\\
      \hat a(x), & 1 \leq x < 2,\\
      x(x-1), & x \geq 2,
    \end{cases}
  \end{equation*}
  for $x\in\rset$, where $\hat a_j$ is a positive and
  monotone $C^2$ function chosen such that $a_j\in C^2(\rset)$, \eg{}
  $\hat a_j(x) := 9(x-1)^3 - 11(x-1)^4 + 4(x-1)^5$, see Figure \ref{fig:propensity}.

\end{example}

\begin{figure}[hbpt]
  \centering
  \subfigure[Example \ref{ex:propensity_extension1}]{
    \includegraphics*[width=0.45\textwidth,viewport=80 200 540 600]
    {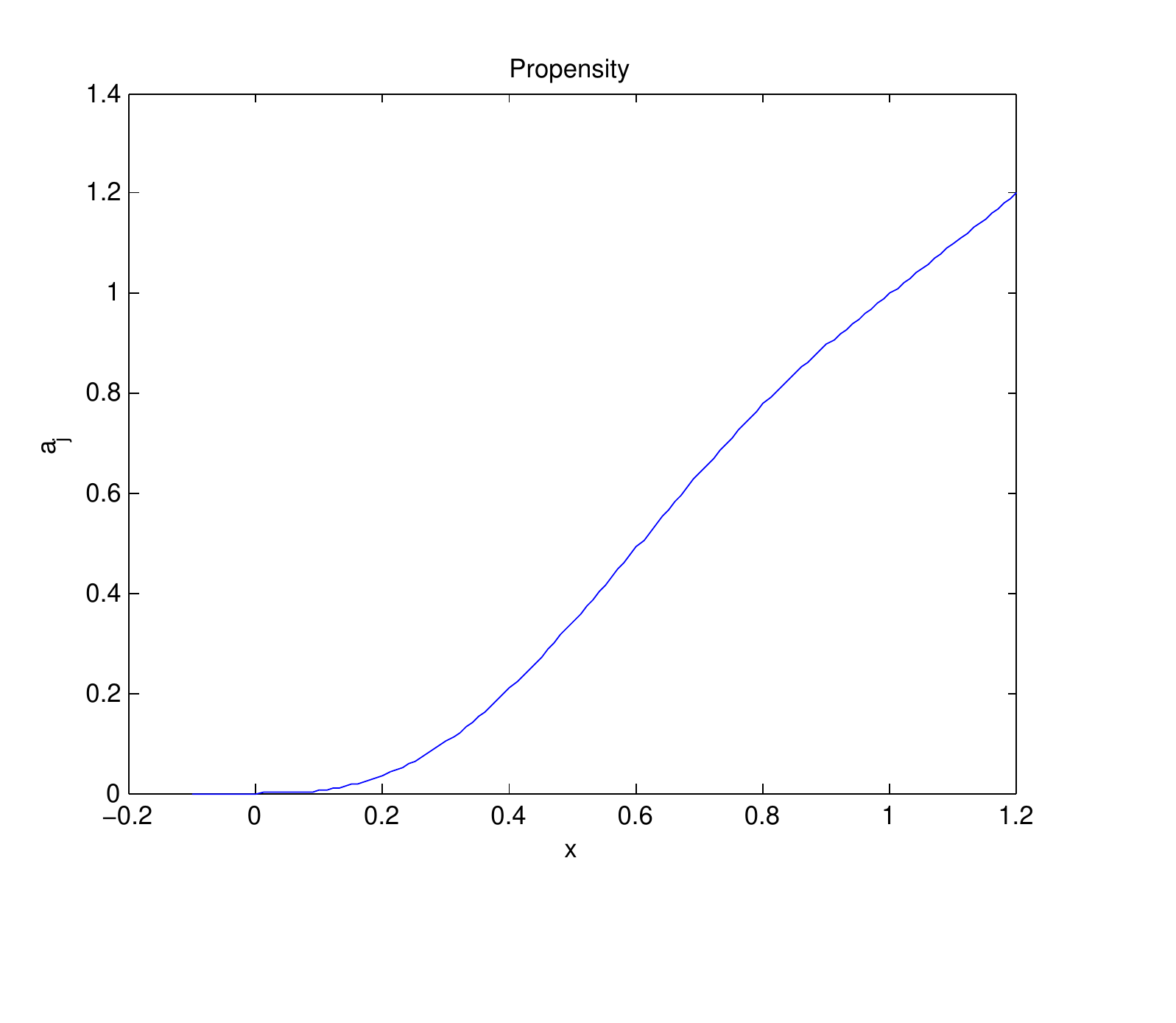}}
  \subfigure[Example \ref{ex:propensity_extension2}]{
    \includegraphics*[width=0.45\textwidth,viewport=80 200 540 600]
    {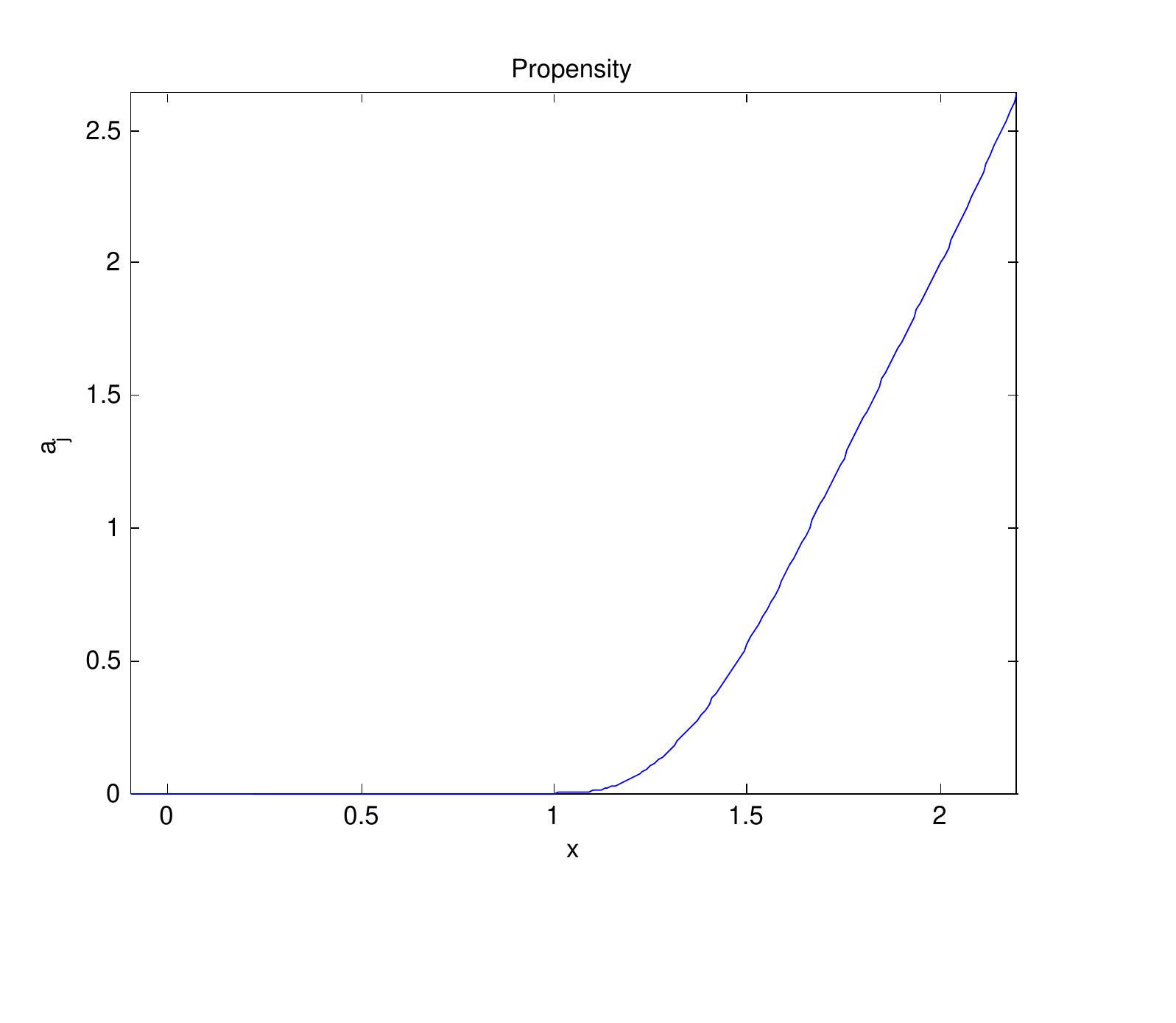}}
  \caption{Extended propensity functions in Example
    \ref{ex:propensity_extension1} and \ref{ex:propensity_extension2}.}
  \label{fig:propensity}
\end{figure}

In Lemma \ref{lem:cont_ext} we saw that $\tilde u$ and its first
two derivatives are locally bounded in $\rset^d \times
[0,T]$, however, this bound depends on $\gamma$.
The next step is to show under what conditions we can find a bound
that does not depend on $\gamma$. First, we start by generalizing the
polynomial form of the propensity \eqref{eq:propensity} in Remark
\ref{rem:propensity} into bounds:

\begin{assumption}\label{assu:prop_bnd}

  Given some multi-index $p_j \in \zset_+^d$ and
  positive constants $0 < C_L^i \leq C_U^i$, $0 < C_L' \leq C_U'$ and $0
  < C_L'' \leq C_U''$,
  assume that the propensity 
  has the bounds
  \begin{equation}
    \label{eq:prop_bnd}
    \begin{aligned}
      C_L (x+\zeta_j)_+^{p_j} &\leq a_j(x) \leq C_{U} (x+\zeta_j)_+^{p_j}, \\
      C_L' (x+\zeta_j)_+^{p_j-e_i} &\leq \partial_{x_i} a_j(x) \leq C_{U}' (x+\zeta_j)_+^{p_j-e_i}, \\
      C_L'' (x+\zeta_j)_+^{p_j-e_i-e_k} &\leq \partial_{x_{ik}} a_j(x)
      \leq C_{U}'' (x+\zeta_j)_+^{p_j-e_i-e_k},\\
    \end{aligned}
  \end{equation}
  for $i=1,\ldots,d$, $j=1,\ldots,M$ and $x\in\zset^d_+$.  Here,
  $(\cdot)_+:=\max(\cdot,0)$,
  $\zeta_{ji}:=(\nu_{ij}+1)\one_{\nu_{ij}<0}$, and $e_i$ indicate the
  unit basis vectors in $\rset^d$.
  
\end{assumption}

In the next step, we introduce a weight definition that in Lemma
\ref{lem:growth} will allow us to do find weighted estimates for
the value function and its derivatives.

\begin{remark}[Weight definition]\label{rem:weight_def}
  From the Assumption \ref{assu:prop_bnd} on boundedness of $Z_t$,
  there exists a vector $n>0$ in $\rset^d$ such that $\nu_j \cdot n <
  0$ for $j=1,\ldots,M$.  Take a weight function $y(z):=\psi(n \cdot
  z)>0$ where $\psi:(t_{\min},+\infty) \to \rset^+$ is a smooth,
  strictly decreasing function.  Observe that we have by construction
  $y(z+\nu_j) - y(z) \ge 0$, for all $z \in \tilde \Pi(Z_0,n)$.
  In particular, if $g$ has polynomial growth $g(z) \le C (z\cdot
  n)^{r_0}(z\cdot n+ 1)^{r_1}$ we can have the product $yg$
  uniformly bounded, for instance by taking $\psi(t) = (t +t_0)^{-r_0}
  (t + 1)^{-r_1}$, with some $-t_{\min} = t_0>0$ and $r_0,r_1\ge 0.$
\end{remark}

Now we are ready to present weighted estimates that are uniform with
respect to the scaling parameter $\gamma$, and on which Theorem
\ref{thm:rel_error_bnd} for the a priori relative error bound is
based.

\begin{lemma}[Growth condition]\label{lem:growth}
  
  Assume that the assumptions on boundedness of $Z_t$ and the conditions
  on $a_j$ in Assumption \ref{assu:basic_assumption} hold.
  Also, assume that the bounds \eqref{eq:prop_bnd} on the propensity
  and its derivatives in Assumption \ref{assu:prop_bnd} are satisfied.
  Let $|p|$ be the maximum reaction order in the system, \ie{}
  $|p|:=\max_j |p_j|$ for the multi-indices in \eqref{eq:prop_bnd},
  and assume that for each of the components $x_i,$ there exist a
  single component reaction whose propensity has order $|p|$.
  Finally, assume that $g$ is $C^2$ on $\rset^d$, non-negative
  with $g(0)=0$, and that
  for the weight functions
  \begin{equation*}
    \begin{aligned}
      y(z) := &\
      \left( z \cdot n + \frac{1}{\gamma}\max_j |\nu_j\cdot n| \right)^{-r_0}
      \left( 1+ z \cdot n\right)^{- r_1},\\
      y_1(z) := &\ y(z) \left( z \cdot n + \frac{1}{\gamma}\max_j | \nu_j\cdot n| \right),\\
      y_2(z) := &\ y_1(z) \left( z \cdot n + \frac{1}{\gamma}\max_j | \nu_j\cdot n| \right),\\
    \end{aligned}
  \end{equation*}
  the quantities $|yg|$, $|y_1\nabla g|$ and $|y_2\nabla^2 g|$ are bounded.

  We then have the bounds
  \begin{equation}
    \label{eq:order}
    \begin{aligned}
      |\tilde u(z,t)| 
      \leq&\ \frac{C}{y(z)},\\
      |\partial_z \tilde u(z,t)| 
      \leq&\ 
      \frac{C}{y_1(z)}+\frac{C'}{y_1(z)}
      (\gamma z\cdot n)^{|p|}
      \prob\left(
        \frac{\theta}{\gamma}< Z_T \cdot n<\eta  \biggm| Z_t=z 
      \right),\\
      |\partial^2_z \tilde u(z,t)| 
      \leq&\ 
      \frac{C}{y_2(z)}
      +\frac{C''}{y_2(z)}
      (\gamma z\cdot n)^{2|p|}
      \prob\left(
        \frac{\theta}{\gamma}< Z_T \cdot n<\eta  \biggm| Z_t=z 
      \right)^2,\\
    \end{aligned}
  \end{equation}
  for $(z,t) \in \tilde\Pi(Z_0,n) \times [0,T]$
  , for some $\theta,\eta>0$ and positive constants 
  $C$, $C'$ and $C''$ that are independent of $\gamma$.
  
\end{lemma}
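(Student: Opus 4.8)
The plan is to represent $\tilde u$ and its lattice derivatives $\tilde v:=\partial_z\tilde u$, $\tilde w:=\partial_z^2\tilde u$ probabilistically and to combine those representations with monotonicity of the weight functions along the paths of $Z$. Two structural facts will be used repeatedly. First, by Assumption~\ref{assu:basic_assumption} the map $s\mapsto n\cdot Z_s$ is non-increasing along every path, so for a fixed $\eta>0$ the event $\{\, n\cdot Z_s<\eta\ \text{for some }s\in[t,T]\,\}$ coincides with $\{\, n\cdot Z_T<\eta\,\}$. Second, by Remark~\ref{rem:weight_def} the weight $y$ satisfies $y(z+\tilde\nu_j)\ge y(z)$ on $\tilde\Pi(Z_0,n)$, while by Assumption~\ref{assu:prop_bnd} the propensities $\tilde a_j$ vanish algebraically as a component of $z$ approaches the boundary; this degeneracy is the mechanism by which the blow-up of the weights near the boundary is to be compensated.

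Step 1: the bound on $|\tilde u|$. From the stochastic representation $\tilde u(z,t)=\expt[g(Z_T)\mid Z_t=z]$, the generator of $Z$ applied to $1/y$ equals $\sum_j\tilde a_j(z)\bigl(1/y(z+\tilde\nu_j)-1/y(z)\bigr)\le0$; since $Z$ takes values in the finite lattice $\tilde\Pi(Z_0,n)$, the process $s\mapsto 1/y(Z_s)$ is a bounded supermartingale, so $\expt[1/y(Z_T)\mid Z_t=z]\le 1/y(z)$. Because $|yg|$ is bounded, $|g|\le C/y$, and hence $|\tilde u(z,t)|\le C/y(z)$; in particular $|D_j\tilde u(z,t)|\le C/y(z)$.

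Step 2: the first derivative. The vector $\tilde v$ solves \eqref{eq:kolmogorov_rescaled2} componentwise. Following the approach announced in the introduction, I recast the equation for the weighted derivative $y_1\tilde v$ as a backward Kolmogorov equation for an auxiliary finite-activity pure jump process with modified propensities, together with a zeroth-order coefficient and a source term proportional to $\nabla\tilde a_j\,D_j\tilde u$; the definition of $y_1$ (it equals $y$ times $z\cdot n$, plus a $\gamma^{-1}$-scale shift) is chosen precisely so that this zeroth-order coefficient stays bounded uniformly in $\gamma$, the degeneracy of $\tilde a_j$ at the boundary absorbing the growth of the weight. Feynman--Kac then represents $y_1(z)\tilde v(z,t)$ as an expectation of the bounded terminal datum $y_1\nabla g$ plus a discounted time integral of the weighted source; the terminal term yields the main contribution $C/y_1(z)$. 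For the source, \eqref{eq:prop_bnd} bounds $\nabla\tilde a_j$, and a mean value estimate of $D_j\tilde u$ together with Step~1 (and a bootstrap of the first-derivative bound) controls the integrand: where $n\cdot Z_s\ge\eta$ the weights absorb the polynomial growth and a Gr\"onwall argument in $t$ produces a further $C/y_1(z)$, while on the thin shell $\{\,\theta/\gamma<n\cdot Z_s<\eta\,\}$ the residual growth is at most $(\gamma\,z\cdot n)^{|p|}$ and is multiplied by the shell indicator, which by the first structural fact transfers to $\{\,\theta/\gamma<n\cdot Z_T<\eta\,\}$, giving exactly the correction $\tfrac{C'}{y_1(z)}(\gamma z\cdot n)^{|p|}\,\prob(\theta/\gamma<Z_T\cdot n<\eta\mid Z_t=z)$; finally, $n\cdot z=0$ contributes nothing because the relevant propensities vanish there.

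Step 3 and the main obstacle. Starting from \eqref{eq:kolmogorov_rescaled3}, the same device applied entrywise to $y_2\tilde w$ produces a backward equation whose source contains $\nabla\tilde a_j\,D_j\tilde v$ and $\nabla^2\tilde a_j\,D_j\tilde u$. Inserting the Step~2 bound for $\tilde v$ into the first of these, the main part again gives $C/y_2(z)$, while the boundary part now carries two near-boundary penalties — the excursion of the path into the shell, and the probability factor already present in the $\tilde v$-bound — which combine, via the first structural fact and the strong Markov property, into the squared probability and the doubled power $(\gamma z\cdot n)^{2|p|}$ of \eqref{eq:order}. The main obstacle throughout is exactly this boundary bookkeeping. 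In the regime considered here the propensities $\tilde a_j$ are of size $\gamma^{|p_j|}$, so a naive weighting produces zeroth-order coefficients of size $\gamma^{|p|}$ and source terms that do not close under Gr\"onwall; one must verify that the specific weights $y,y_1,y_2$ make the zeroth-order coefficients bounded uniformly in $\gamma$, that their algebraic blow-up near $\partial(\zset_+^d/\gamma)$ is exactly matched by the algebraic degeneracy of $\tilde a_j$, and that every unabsorbed factor of $\nabla\tilde a_j$ or $\nabla^2\tilde a_j$ is paired with the small event that the path reaches the $\gamma^{-1}$-scale shell near the boundary. The remaining ingredients — the Taylor and mean value estimates for the differences $D_j$, the verification of the Feynman--Kac formula, and Gr\"onwall — are routine.
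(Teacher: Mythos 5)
Your overall architecture is the paper's: a weighted maximum principle for $\tilde u$ (your supermartingale argument for $1/y(Z_s)$ is the dual formulation of the paper's bound $\|y\tilde u(\cdot,t)\|_\infty\le\|yg\|_\infty$ and is fine), followed by rewriting the equations for $y_1\partial_z\tilde u$ and $y_2\partial_z^2\tilde u$ as backward equations for a modified jump process $\hat Z$ with reduced propensities $\hat a_j=\tilde a_j\,y_1(z)\,y_1(z+\tilde\nu_j)^{-1}$ and a killing rate $\beta=\sum_j\tilde a_j D_j y_1\, y_1(\cdot+\tilde\nu_j)^{-1}$, a Feynman--Kac representation, and a split of the time integral according to whether the path sits in the shell $\{\theta/\gamma<n\cdot\hat Z_s<\eta\}$, with monotonicity of $s\mapsto n\cdot\hat Z_s$ converting the shell event into the terminal-time event appearing in \eqref{eq:order}. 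The bootstrap of the first-derivative bound into the source of the second-derivative equation, producing the squared probability and the power $2|p|$, also matches the paper.

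The gap is in your stated mechanism for the ``main part'' of the source term in Steps 2 and 3. You claim the weights are chosen so that the zeroth-order coefficient stays bounded uniformly in $\gamma$ and that the weighted source is absorbed by the weights, after which Gr\"onwall closes the estimate. Neither holds, and if both did, the lemma would follow with no probability correction at all, contradicting the form of \eqref{eq:order}. The weighted source $|y_1\nabla\tilde a_j D_j\tilde u|$ is only bounded by $C\gamma^{|p_j|}(z\cdot n)^{|p_j|}$, i.e.\ it grows like $\gamma^{|p|}$ away from the boundary, so a Gr\"onwall argument against a bounded killing rate leaves an uncontrolled factor $\gamma^{|p|}$. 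What actually closes the estimate in the paper is a \emph{lower} bound on the killing rate, $\beta\ge \xi_0 C(\gamma\eta)^{|p|}$ on the set $\{n\cdot\hat Z_s\ge\eta\}$ --- this is precisely where the hypothesis that each component admits a single-component reaction of order $|p|$ enters, a hypothesis your argument never uses --- so that $\int_t^T\gamma^{|p|}e^{-\xi_0 C(\gamma\eta)^{|p|}\kappa(T-s)}\,\ud s$ is bounded independently of $\gamma$. The exponential discount, not the weight, kills the $\gamma^{|p|}$ growth, and the complementary event $\{|I_\eta^s|<\kappa(T-s)\}$, of small probability, is what generates the probability factor in \eqref{eq:order}. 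Your skeleton is correct, but the central cancellation is attributed to the wrong object, and as described the estimate would not close.
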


\begin{proof}

  Throughout the proof $C$ will denote a constant value, not
  necessarily the same at each instance. Also, for simplicity we will
  assume that the propensity has the form $a_j(x):=Cx^{p_j}$. In the
  general case with bounds \eqref{eq:prop_bnd}, the proof is
  essentially the same. The proof is divided into three sections
  corresponding to the bounds in \eqref{eq:order}.
  
  \subsection*{Bound on $\tilde u$:}

  Let $U(z,t) := y(z) \tilde u(z,t)$.
  Since $g$ has polynomial growth we can take $y$ as in Remark \ref{rem:weight_def}
  with 
  \begin{equation*}
    t_0 = \max_j |\tilde \nu_j\cdot n| := |\tilde \nu^*\cdot n|.
  \end{equation*}
  Also, since $g(0) = 0$ we have $r_0>0$.
  Then we have
  \begin{equation*}
    \begin{aligned}
      \partial_t U + \sum_{j=1}^M \tilde a_j
        \Bigl(
        y \tilde u(z + \tilde\nu_j,t) - U
        \Bigr)
        &= 0, &&\text{ in } \tilde\Pi(Z_0,n) \times [0,T),\\
        U &= yg, &&\text{ on } \tilde\Pi(Z_0,n) \times \{T\},\\
    \end{aligned}    
  \end{equation*}  
  where $\| U(\cdot,T) \|_\infty = \| y \tilde u(\cdot,T) \|_\infty =
  \| y g \|_\infty$ by definition is bounded.  Adding and subtracting
  $\sum_{j=1}^M \tilde a_j( z ) U(z + \tilde\nu_j,t)$ yields
  \begin{equation}
    \label{eq:kolmogorov_modified}
    \begin{aligned}
      \partial_t U +\sum_{j=1}^M \tilde a_j D_j U &= \sum_{j=1}^M
      \tilde a_j \tilde u(z + \tilde\nu_j,t) D_j y,
    \end{aligned}    
  \end{equation}  
  with the stochastic representation
  \begin{equation*}
    U(z,t) = \expt\left[ y(Z_T)g(Z_T) - 
      \int_t^T \sum_{j=1}^M   \tilde a_j(Z_s) \tilde u(Z_s + \tilde\nu_j,s) 
      D_j y(Z_s) \Biggm| Z_t=z \, \ud s \right].
  \end{equation*}
  Since the right hand side of \eqref{eq:kolmogorov_modified} is
  non-negative we obtain
  \begin{equation*}
    \|U(\cdot,t)\|_\infty \leq \| U(\cdot,T) \|_\infty \leq C,
  \end{equation*}
  and therefore
  \begin{equation*}
    \tilde u(z,t) \leq C y(z)^{-1} \le C \left( z \cdot n +
      \frac{|\nu^*\cdot n|}{\gamma}\right)^{r_0} \left( 1+ z \cdot
      n\right)^{r_1},
  \end{equation*}
  which is the first estimate in \eqref{eq:order}.
   
  \subsection*{Bound on $\partial \tilde u$:}

  For the bound on the first derivative $\tilde v:=\partial_z \tilde u
  \in \rset^d$, 
  we consider the weighted function $V(z,t) := y_1(z) \tilde v(z,t) \in \rset^d$
  with the weight 
  \begin{equation*}
    y_1(z) := y(z) \left( z \cdot n + \frac{|\nu^*\cdot n|}{\gamma}\right)
    = \left( z \cdot n + \frac{|\nu^*\cdot n|}{\gamma}\right)^{1-r_0}
    \left( 1+ z \cdot n\right)^{- r_1} \in \rset,
  \end{equation*}
  such that the product $y_1 \nabla g$ is bounded uniformly in $z$ and $\gamma$.  
  Define an auxiliary function
  \begin{equation*}
    R(z,t) =  \sum_{j=1}^M      \tilde a_j(z) \tilde v(z + \tilde\nu_j,t) D_j y_1(z).
  \end{equation*}
  This yields
  \begin{equation*}
    \begin{aligned}
      \partial_t V + \sum_{j=1}^M  \tilde a_j D_j V 
      &=
      R(z,t) -
     \sum_{j=1}^M \nabla\tilde a_j(z) y_1(z) D_j \tilde u(z,t),
      &&\text{ in } \tilde \Pi(Z_0,n) \times [0,T),\\
      V &= y_1 \nabla g, &&\text{ on } \tilde\Pi(Z_0,n) \times \{T\}.\\
    \end{aligned}    
  \end{equation*}  
  Denote 
  \begin{equation}\label{eq:beta_def}
  \aligned
    \beta_j(z) :=&  \tilde a_j(z) D_jy_1(z)
    y_1(z+\tilde\nu_j)^{-1},  \\
    \beta(z) :=&  \sum_{j=1}^M \beta_j(z),
    \endaligned
  \end{equation}
  and observe that $\beta_j(0) =0$ and $\beta_j \ge 0$.
  Since
    \begin{equation*}
    R(z,t) = \sum_{j=1}^M V(z+\tilde\nu_j,t) \beta_j(z),
  \end{equation*}
  we have
  \begin{equation*}
    \begin{aligned}
      \partial_t V +\sum_{j=1}^M  (\tilde a_j - \beta_j) D_j V -\beta V
      &=
      -y_1\,\sum_{j=1}^M  \nabla \tilde a_j D_j \tilde u,
      &&\text{ in } \tilde\Pi(Z_0,n) \times [0,T),\\
      V &= y_1 \nabla g, &&\text{ on } \tilde\Pi(Z_0,n) \times \{T\}.\\
    \end{aligned}    
  \end{equation*}  
  The solution $V$ can now be stochastically represented as
  \begin{equation}
    \label{eq:Vint}
    \begin{aligned}
      V(z,t) = &\
      \underbrace{
        \expt \Bigl[ y_1(\hat Z_T) \nabla g(\hat Z_T) e^{-\int_t^T \beta(\hat Z_{s}) \ud s}
        \Bigm| \hat Z_t=z \Bigr]
      }_{= V_1}\\
      &\ + 
      \underbrace{
        \expt \biggl[ \int_t^T y_1(\hat Z_s) \,e^{-\int_s^T \beta(\hat
          Z_{t'}) \ud t'}\,  \sum_{j=1}^M \,\nabla\tilde a_j(\hat Z_s)  D_j
        \tilde u(\hat Z_s,s)  
        \ud s  \biggm| \hat Z_t=z \biggr]
      }_{= V_2},
    \end{aligned}
  \end{equation}
  where $\hat Z_t$ is a modified jump process with smaller propensities
  \begin{equation*}
    \hat a_j(z) := (\tilde a_j-\beta_j)(z) = \tilde
    a_j(z)\Bigl(1-D_jy_1(z)y_1(z+\tilde\nu_j)^{-1}\Bigr)
    = \tilde a_j(z)y_1(z)y_1(z+\tilde\nu_j)^{-1}.
  \end{equation*}
  Indeed, since $y_1(z+\tilde\nu_j) \geq y_1(z) \geq 0$ by definition, we have $0 \leq \hat a_j(z)
  \leq \tilde a_j(z)$.

  Observe that since the product $|y_1\nabla g|$ is bounded, we have $|V_1|\le
  C$. We now consider $V_2$, and first focus on a lower bound for the
  functions $\beta_j$ defined in \eqref{eq:beta_def}. From the
  assumption $a_j(x):=Cx^{p_j}$ we obtain
    \begin{equation*}
    \beta_j(z) = C \gamma^{|p_j|} z^{p_j} \xi_j(z),
  \end{equation*}
  and
  \begin{equation*}
    \begin{aligned}
      \xi_j(z\cdot n) :=&\ D_jy_1(z)y_1(z+\tilde\nu_j)^{-1}\\
       =&\ 1 - \left(1 -
        \frac{|\tilde \nu_j\cdot n|}{z\cdot n + |\tilde \nu^*\cdot
          n|}\right)^{r_0-1} \left(1 -
        \frac{|\tilde \nu_j \cdot n|}{1+z\cdot n}\right)^{r_1}.
    \end{aligned}
  \end{equation*}
  Observe that $\xi_j$ is strictly decreasing with respect to the
  product $z\cdot n$ on
  $(0,+\infty)$, 
  and that $\xi(+\infty) = 0^+$.  Since the values
  $z\in\tilde\Pi(Z_0,n)$ and $\tilde \nu$ are bounded by assumption
  \ref{assu:boundedness}, we conclude that $\xi_j(z)\ge \xi_0>0$ and
  then
  \begin{equation*}
    \beta_j(z) \ge C \xi_0 \gamma^{|p_j|} z^{p_j},
  \end{equation*}
  for all $z\in\tilde\Pi(Z_0,n)$.
  
  Given $\eta>0$, let us now introduce a family of time intervals,
  \begin{equation*}
    I_\eta^t := \{ s\in[t,T] : \hat Z_s\cdot n \geq \eta\}.
  \end{equation*}
  Let   
  \begin{equation*}
    |p| = \max_j |p_j|,
  \end{equation*}
  and observe that, thanks to the assumption on the existence of all
  the single component reactions with order $|p|$,
  \begin{equation}\label{eq:lower_bound_intbeta}
    \int_t^T \beta(\hat Z_s) ds 
    \ge  \xi_0 C \, |I_\eta^t| (\gamma\eta)^{|p|}.
  \end{equation}
  On the other hand, we have 
  \begin{equation}\label{eq;bound_forcing}
    \begin{aligned}
      | \partial_{z_i} \tilde a_j(z) y_1(z) D_j
      \tilde u(z,s) | \le & C |  \partial_{z_i} a_j(\gamma z) y_1(z) y(z)^{-1}| \\
      = & C \gamma^{|p_j|} z^{p_j-e_i}
      \left( z \cdot n + \frac{|\nu^* \cdot n|}{\gamma}\right)\\
      \le & C \gamma^{|p_j|} ( z \cdot n )^{{|p_j|}-1}  \left( z
        \cdot n + \frac{|\nu^* \cdot n|}{\gamma}\right).
    \end{aligned}
  \end{equation}
  Let $0<\kappa <1$ denote a time fraction.  Then thanks to \eqref
  {eq:lower_bound_intbeta} and \eqref{eq;bound_forcing} the term $V_2$
  in \eqref{eq:Vint} can be estimated as
  \begin{equation*}
    \begin{aligned}
      |V_2(z,t)| 
      \leq &\
      C\, \, \int_t^T  \expt \biggl[ \gamma^{|p|} (\hat Z_s \cdot n)^{|p|}
      e^{- \xi_0 C\, \,(\gamma\eta)^{|p|} \,
        |I_\eta^s|   }    \biggm| \hat Z_t=z \biggr]\ud s\\
      \leq &\ 
      C\, \gamma^{|p|}\, \int_t^T 
      e^{- \xi_0\, C\, \,(\gamma\eta)^{|p|} \,
        \kappa (T-s) }  \ud s \\
      & +
      C\, \gamma^{|p|} \expt \biggl[  \int_t^T  ( \hat Z_s \cdot n)^{|p|} 
      1_{
        |I_\eta^s| < \kappa (T-s)  }    \ud s  \biggm| \hat Z_t=z \biggr] \\
      \leq  &   
      C\, \frac{1-e^{- \xi_0 C\, \,(\gamma\eta)^{|p|} \, 
          \kappa (T-t) }}{ \xi_0 C\,\eta^{|p|} \, 
        \kappa } +
      C\, \gamma^{|p|}  \int_t^T \prob( |I_\eta^s| < \kappa (T-s) | \hat Z_t=z  ) \ud s.
    \end{aligned}
  \end{equation*}
  Observe, that since the process $\hat Z_s\cdot n$ is non increasing, we have 
  \begin{equation*}  1_{
      |I_\eta^{s_1}| < \kappa (T-s_1)  } \le 1_{
      |I_\eta^{s_2}| < \kappa (T-s_2)  }\le 1_{
      \hat Z_T \cdot n < \eta  }, \text{  for }s_1 \le s_2<T,
  \end{equation*}
  and then, for a given constant $\theta>1$,
  \begin{equation}
    \label{eq:bnd_non_incr}
    \begin{aligned}
      \gamma^{|p|} & \expt \biggl[  \int_t^T  (\hat Z_s \cdot n)^{|p|}
      1_{
        |I_\eta^s| < \kappa (T-s)  }    \ud s  \biggm| \hat Z_t=z \biggr] \\
      &\le 
      \gamma^{|p|} \expt \biggl[  \int_t^T   (\hat Z_s \cdot n)^{|p|}
      1_{
        \hat Z_T \cdot n<\eta  } \left( 1_{
          \hat Z_s \cdot n\le \frac{\theta}{\gamma} }
        + 1_{
          \hat Z_s \cdot n> \frac{\theta}{\gamma}} \right)  \ud
      s  \biggm| \hat Z_t=z \biggr] \\ 
      &\le 
      \theta^{|p|} \expt \biggl[  \int_t^T   
      1_{
        \hat Z_T \cdot n\le \frac{\theta}{\gamma} } \ud s
      \biggm| \hat Z_t=z \biggr] \\ 
      &+ 
      \gamma^{|p|} (z \cdot n)^{|p|} \expt \biggl[  \int_t^T  1_{
        \frac{\theta}{\gamma}<\hat Z_T \cdot n<\eta }  \ud s
      \biggm| \hat Z_t=z \biggr] \\ 
      &\le  (T-t)
      \theta^{|p|}
      \prob\left(  \hat Z_T \cdot n\le \frac{\theta}{\gamma}
        \biggm| \hat Z_t=z \right) \\ 
      &+ 
      (T-t)    \gamma^{|p|} (z\cdot n)^{|p|} \prob\left(
        \frac{\theta}{\gamma}<\hat Z_T \cdot n<\eta  \biggm|
        \hat Z_t=z \right). \\ 
    \end{aligned}
  \end{equation}

  Observe that when the process $\hat Z_s \cdot n$ is within a
  distance $\mathcal{O}(1/\gamma)$ from zero there are only bounded
  contributions to the derivative of $\tilde u$.  Therefore, what really
  deteriorates the derivative estimate is the time spent between
  $\theta/\gamma$ and $\eta.$

  \subsection*{Bound on $\partial ^2 \tilde u$:}
  Similar to the bound on $\partial \tilde u$ we consider the weighted
  function $W(z,t) := y_2(z) \tilde w(z,t) = y_2(z) \partial^2_z \tilde
  u(z,t) \in \rset^{d\times d}$ with the weight 
  \begin{equation*}
    y_2(z) = y_1(z) \left( z \cdot n + \frac{|\nu^*\cdot
        n|}{\gamma}\right) \in \rset,
  \end{equation*}
  such that the product $y_2 \nabla^2 g$ is bounded uniformly in $z$
  and $\gamma$.  Following the same procedure as for $V$ we obtain
  from \eqref{eq:kolmogorov_rescaled3} that
  \begin{equation*}
    \begin{aligned}
      \partial_t W + \sum_{j=1}^M  (\tilde a_j - \beta_j) D_j W -
      \beta_j W
      &= - R,
      &&\text{ in } \tilde\Pi(Z_0,n) \times [0,T),\\
      W &= y_2 \nabla^2 g, &&\text{ on } \tilde\Pi(Z_0,n) \times \{T\},\\
    \end{aligned}    
  \end{equation*}  
  where
  \begin{equation}
    \label{eq:beta_new}
    \beta(z) :=  \sum_{j=1}^M  \beta_j(z), \quad 
    \beta_j(z) := \tilde a_j(z) D_j y_2(z) y_2(z + \tilde\nu_j)^{-1},
  \end{equation}
  and  
  \begin{equation*}
    R(z,t) =  \sum_{j=1}^M  
    2y_2(z) \nabla \tilde a_j(z) \otimes D_j\tilde v(z,t)
    + \nabla^2 \tilde a_j(z) y_2(z) D_j\tilde u(z,t),
  \end{equation*}
  with '$\otimes$' denoting the tensor product.
  This gives the
  stochastic representation formula
  \begin{equation}
    \label{eq:Wint}
    \begin{aligned}
      W(z,t) =&\
      \underbrace{
        \expt \Bigl[ y_2(\hat Z_T) \nabla^2 g(\hat Z_T) e^{-\int_t^T \beta(\hat Z_{s}) \ud s}
        \Bigm| \hat Z_t=z \Bigr]
      }_{=:W_1}\\
      &+ 
      \underbrace{
        \expt \biggl[ \int_t^T y_2(\hat Z_s) \,
        e^{-\int_s^T \beta(\hat Z_{t'}) \ud t'}\,
        R(\hat Z_s,s)
        \ud s  \biggm| \hat Z_t=z \biggr]
      }_{=:W_2},
    \end{aligned}
  \end{equation}
  where the modified process $\hat Z_t$ has propensity $\hat a_j(z) :=
  \tilde a_j(z) - \beta_j(z)$.
  
  Since $|y_2\nabla^2 g|$ is bounded we have that $|W_1| \leq C$. For $W_2$
  we note that \eqref{eq:lower_bound_intbeta} holds also for the new
  $\beta$ in \eqref{eq:beta_new} and the new modified process $\hat
  Z_t$.  Also, we have the bounds
  \begin{equation*}
    \begin{aligned}
    | \partial_{z_i} \tilde a_j(z) y_2(z) D_j
    \tilde v(z,s) | 
    &\le \ C |  \partial_{z_i} a_j(\gamma z) y_2(z) |\\
    \cdot &\left| \gamma^{|p|} (z \cdot n)^{|p|} 
    \prob \left(
    \theta\gamma^{-1}<\hat Z_T \cdot n < \eta \bigm| \hat Z_s=z
    \right) 
    y_1(z)^{-1} \right| \\
    &= \ C \gamma^{|p_j|} z^{p_j-e_i}
    \left( z \cdot n + \frac{|\nu^* \cdot n|}{\gamma}\right)\\
    \cdot & \left| \gamma^{|p|} (z \cdot n)^{|p|} 
    \prob \left(
    \theta\gamma^{-1}<\hat Z_T \cdot n < \eta \bigm| \hat Z_s=z
    \right) 
    y_1(z)^{-1} \right| \\
    &\le \ C \gamma^{2|p_j|} ( z \cdot n )^{2|p_j|} 
    \prob \left(
    \theta\gamma^{-1}<\hat Z_T \cdot n < \eta \bigm| \hat Z_s=z
    \right), 
    \end{aligned}
  \end{equation*}
  and similarly
  \begin{equation*}
    \begin{aligned}
    | \partial_{z_i z_k} \tilde a_j(z) y_2(z) D_j
    \tilde u(z,s) | \le &\ C |  \partial_{z_i z_k} a_j(\gamma z) y_2(z) / y(z)| \\
    = &\ C \gamma^{|p_j|} z^{p_j-e_i-e_k}
    \left( z \cdot n + \frac{|\nu^* \cdot n|}{\gamma}\right)^2\\
    \le &\ C \gamma^{|p_j|} ( z \cdot n )^{{|p_j|}-2}  \left( z
      \cdot n + \frac{|\nu^* \cdot n|}{\gamma}\right)^2\\
    \le &\ C \gamma^{|p_j|} ( z \cdot n )^{|p_j|}.
    \end{aligned}
  \end{equation*}
  In the same manner as in \eqref{eq:bnd_non_incr} we obtain
  \begin{equation*}
    \begin{aligned}
      |W_2(z,t)| 
      \leq &\
      C  \int_t^T  \expt \biggl[ \gamma^{|p|} (\hat Z_s \cdot n)^{|p|}
      e^{- \xi_0 C \,(\gamma\eta)^{|p|} \, |I_\eta^s|} \biggm| \hat Z_t=z \biggr]\ud s\\
      & + C  \int_t^T  \expt \biggl[
      \gamma^{2|p_j|} ( \hat Z_s \cdot n )^{2|p_j|}
      \prob \left(
        \theta\gamma^{-1}<\hat Z_T \cdot n < \eta \bigm| \hat 
        Z_{s'}=\hat Z_s
      \right)\\
      &\qquad\qquad\qquad \times e^{- \xi_0 C \,(\gamma\eta)^{|p|} \,
        |I_\eta^s|}    \biggm| \hat Z_t=z \biggr]\ud s\\
      \leq &\
      C  + C \gamma^{|p|} (z \cdot n)^{|p|} \prob \left(
        \theta\gamma^{-1}<\hat Z_T \cdot n < \eta \bigm| \hat 
        Z_{s}=z
      \right)
      \\
      & + C \gamma^{2|p|} (z \cdot n)^{2|p|} \prob \left(
        \theta\gamma^{-1}<\hat Z_T \cdot n < \eta \bigm| \hat 
        Z_{s}=z
      \right)^2,\\
    \end{aligned}
  \end{equation*}
  and in particular if the quantity \eqref{eq:crossing_assu} is uniformly bounded in
  $\gamma$ we have that $|W| \leq C$.

\end{proof}

\begin{corollary}\label{cor:growth}
  
  Assume, in addition to the assumptions made in Lemma
  \ref{lem:growth}, that given $z>0$ there exists $\eta(z)$ s.t. for a
  sufficiently small constant $\theta>0$ the quantity
  \begin{equation}\label{eq:crossing_assu}
    \gamma^{|p|} (z \cdot n)^{|p|} \prob\left(
      \frac{\theta}{\gamma}<\hat Z_T \cdot n<\eta  \biggm| \hat Z_t=z \right) <C,
  \end{equation}
  is uniformly bounded in $\gamma$.  Then
  \begin{equation*}
    \label{eq:order2}
    \begin{aligned}
      |\tilde u(z,t)| 
      \leq&\ 
      C \left( z \cdot n + \max_j |\tilde \nu_j\cdot n| \right)^{r_0}
      \left( 1+ z \cdot n\right)^{r_1},\\
      |\partial_z \tilde u(z,t)| 
      \leq&\ 
      C \left( z \cdot n + \max_j |\tilde \nu_j\cdot n| \right)^{r_0-1}
      \left( 1+ z \cdot n\right)^{r_1},\\ 
      |\partial^2_z \tilde u(z,t)| 
      \leq&\ 
      C \left( z \cdot n + \max_j |\tilde \nu_j\cdot n| \right)^{r_0-2}
      \left( 1+ z \cdot n\right)^{r_1},\\
    \end{aligned}
  \end{equation*}
  for $(z,t) \in \tilde\Pi(Z_0,n) \times [0,T]$.

  The assumption \eqref{eq:crossing_assu} is trivially true whenever
  $n\cdot \nu_j =0 \,$ for all $j=1,\ldots,M$, \eg{} for a system
  with only reversible reactions of the form $X_1 \rightleftharpoons
  X_2$.  In that case the product $\hat Z_t\cdot n = z \cdot n$
  remains constant for all $t\le s\le T.$


  
\end{corollary}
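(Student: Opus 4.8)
The plan is to read the corollary off directly from the weighted estimates \eqref{eq:order} proved in Lemma \ref{lem:growth}: under the crossing assumption \eqref{eq:crossing_assu} the probability contributions appearing in those estimates become uniformly bounded in $\gamma$, and what remains is just the reciprocal of the weights $y$, $y_1$, $y_2$, which I would rewrite in closed form.

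First I would record that, since $\tilde\nu_j=\nu_j/\gamma$, one has $\max_j|\tilde\nu_j\cdot n| = \gamma^{-1}\max_j|\nu_j\cdot n|$, so that
\begin{equation*}
  \frac{1}{y(z)} = \left(z\cdot n + \max_j|\tilde\nu_j\cdot n|\right)^{r_0}(1+z\cdot n)^{r_1},
\end{equation*}
and $1/y_1(z)$, $1/y_2(z)$ are obtained from this by lowering the exponent $r_0$ to $r_0-1$ and $r_0-2$, respectively. The bound on $\tilde u$ in \eqref{eq:order} then already reads $|\tilde u(z,t)|\le C/y(z)$, which is precisely the first inequality of the corollary.

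Next, for the first derivative, the only term in \eqref{eq:order} beyond $C/y_1(z)$ is of the form $\frac{C'}{y_1(z)}(\gamma z\cdot n)^{|p|}\,\prob\!\left(\theta\gamma^{-1}<\hat Z_T\cdot n<\eta \bigm| \hat Z_t=z\right)$; by \eqref{eq:crossing_assu} the factor $(\gamma z\cdot n)^{|p|}\prob(\cdots)$ is bounded by a constant independent of $\gamma$, so this term is absorbed into $C/y_1(z)$. For the second derivative the corresponding extra term carries $(\gamma z\cdot n)^{2|p|}\prob(\cdots)^2$, which is exactly the square of the quantity controlled in \eqref{eq:crossing_assu}, hence again uniformly bounded, and it is absorbed into $C/y_2(z)$. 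Combining this with the closed forms of $1/y_1$ and $1/y_2$ gives the three displayed estimates on $\tilde\Pi(Z_0,n)\times[0,T]$.

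Finally, for the last assertion I would note that if $n\cdot\nu_j=0$ for all $j$, then every jump $\tilde\nu_j$ of the modified process $\hat Z_s$ is orthogonal to $n$, whence $\hat Z_s\cdot n = z\cdot n$ for all $s\in[t,T]$ and in particular $\hat Z_T\cdot n = z\cdot n$ almost surely; choosing any $\eta(z)\le z\cdot n$ makes the event $\{\theta\gamma^{-1}<\hat Z_T\cdot n<\eta(z)\}$ empty, so the left-hand side of \eqref{eq:crossing_assu} vanishes and the hypothesis holds trivially. Since the substantive work has already been done in Lemma \ref{lem:growth}, I do not expect a genuine obstacle here; the only points needing care are the bookkeeping of the weight exponents and recognizing that the second-derivative correction term is exactly the square of the quantity controlled by \eqref{eq:crossing_assu}.
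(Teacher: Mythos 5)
Your proposal is correct and follows essentially the same route as the paper: the corollary has no separate proof there precisely because it is read off from the bounds \eqref{eq:order} of Lemma \ref{lem:growth} by absorbing the probability terms via \eqref{eq:crossing_assu} (noting that the second-derivative term is the square of the controlled quantity) and rewriting $1/y$, $1/y_1$, $1/y_2$ using $\max_j|\tilde\nu_j\cdot n|=\gamma^{-1}\max_j|\nu_j\cdot n|$. Your argument for the case $n\cdot\nu_j=0$ — that $\hat Z_s\cdot n$ is then constant so the crossing event is empty for $\eta(z)\le z\cdot n$ — is exactly the paper's closing remark.
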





\subsection{Computational work}\label{sec:work}

The results in the previous section now give us the possibility to
judge in which regimes and for which propensities the Posson bridge
tau-leap method is expected to be more efficient than the SSA. As
mentioned in Section \ref{sec:introduction}, the computational work of
the SSA is roughly inversely proportional to the total propensity, and
becomes intractable as the number of particles grow. The tau-leap
method, on the other hand, approximates the process by using fixed
time steps, and may lose accuracy as the number of particles grow
unless the step size is adjusted accordingly. A reasonable way to
compare the two methods is thus to keep the required accuracy of the
tau-leap method fixed.

From Theorem \ref{thm:rel_error_bnd} we see that the computational
work for the Poisson bridge tau-leap method to achieve a relative error
$\varepsilon:=\expt \bigl[ g(\bar Z_T) - g(Z_T) \bigr]$, using the time
step $\tau = h\gamma^{2-2p}$, is
\begin{equation}\label{eq:tl_work}
  \it{Work}_{TL} \approx \frac{C}{\tau} \approx \frac{C\gamma^{2p-2}}{\varepsilon}.
\end{equation}
The comparable work for the SSA is
\begin{equation*}
  \it{Work}_{SSA} \approx \frac{C}{\tau_{SSA}} \approx C\tilde a(Z_0) =
  C a(\gamma Z_0) \approx C \gamma^p Z_0^p,  
\end{equation*}
and then we have
\begin{equation}\label{eq:work_comparison}
  \frac{\it{Work}_{SSA}}{\it{Work}_{TL}} \approx
  C \gamma^{2-p} Z_0^p \varepsilon.
\end{equation}
Thus, asymptotically as $\gamma\to\infty$, for $p=1$ the tau-leap method
outperforms the SSA. For $p=2$ the methods are comparable and for
$p>2$ the SSA seems to be the right choice.  Note, that for $p=1$ the
step size $\tau=h$ is independent of the number of particles even
though the time between reactions is of order $\gamma^{-p}$.


\begin{remark}
  The estimated work \eqref{eq:tl_work} and consequently the
  comparison \eqref{eq:work_comparison} is a worst case scenario.  For
  a simple decaying reaction, $X\to\emptyset$, with propensity $a(x) =
  x^p$ for $p=1,2,3$, stoichiometric number $\nu=-1$ and initial
  number of particles $\gamma$, the tau-leap method is
  \begin{equation*}
    \begin{aligned}
      \bar X_{t_{n+1}} &= \bar X_{t_n} - \poisson( \bar X_{t_n}^p 
      \tau_n ), \quad p=1,2,3,\\
      \bar X_{t_0} &= \gamma.
    \end{aligned}
  \end{equation*}
  Rewrite
  \begin{equation*}
    \poisson( \bar X_{t_n}^p \tau_n ) = \bar X_{t_n}^p \tau_n +
    \sqrt{\bar X_{t_n}^p \tau_n} \Delta W_{t_n},
  \end{equation*}
  where
  \begin{equation*}
    \Delta W_{t_n} := \frac{\poisson( \bar X_{t_n}^p
      \tau_n ) - \bar X_{t_n}^p \tau_n }{\sqrt{\bar X_{t_n}^p
        \tau_n} }.
  \end{equation*}
  The Berry-Ess\'en theorem, see \cite{durrett}, implies that $\Delta
  W_{t_n}$ approaches a $N(0,\sqrt{\tau_n})$ distributed variable as
  the number of particles grow. Neglecting the (relatively decreasing)
  stochastic term the tau-leap method can thus be approximated by the
  mean field equation
  \begin{equation*}
    \begin{aligned}
    \bar X_t' &= - \bar X_t^p, \quad t\in(0,T], \quad p=1,2,3,\\
    \bar X_0 &= \gamma.
    \end{aligned}
  \end{equation*}
  with scaled solutions
  \begin{equation*}
     \bar Z_t = e^{-t}, \quad 
     \bar Z_t = \frac{1}{\gamma t+1}, \quad 
     \bar Z_t = \frac{1}{\sqrt{ 2\gamma^2 t + 1 }},
  \end{equation*}
  for $p=1,2,3$, respectively. Since $g(\bar Z_T)$ is essentially independent of
  $\gamma$ for $p=1$, the work is also expected to be independent of
  $\gamma$, as indicated in \eqref{eq:tl_work}. For $p=2,3$ the
  relative solution $\bar Z_T$ decays as $\gamma^{-1}$ and most of the
  decay happen in $\gamma^{-p}$ time, so for the interval
  $t\in(\gamma^{-p},T)$ less work is required and using the time-step
  $\tau=h\gamma^{2-2p}$ in the interval $t\in(0,\gamma^{-p})$ implies
  that the estimate \eqref{eq:tl_work} gains at least one order of
  $p$.

\end{remark}
\begin{remark}
  In \cite{agk}, the authors consider the case of constant density,
  by assuming that
  \begin{equation*}
    c_j = C\gamma^{1-|\nu_j^-|}, \quad |\nu_j^-|:=\sum_{i=1}^{d} |\nu_{ji}^-|,
  \end{equation*}
  in \eqref{eq:propensity}.
  This implies that the propensity can be written as
  \begin{equation*}
    \tilde a_j(z) = a_j(\gamma x) = \gamma A(x),
  \end{equation*}
  for some uniformly bounded function $A$, and the 'effective'
  propensity is thus linear in terms of $\gamma$.  Using this
  assumption, the authors of \cite{agk} show that choosing $\tau =
  \gamma^{-\beta}$, for some $\beta\in(0,1)$, gives a relative error
  of the same order.  Using the same constant density assumption would,
  with our analysis, lead to a step size $\tau=h$, for some $h>0$, and a
  relative error of the same order, which has the additional advantage
  that the step size is independent of $\gamma$ and the unknown
  constant $\beta$.

\end{remark}


\subsection{Dual approximation}

In Theorem \ref{thm:rel_error_bnd} we showed an \emph{a priori} estimate for the
relative global error.  For an \emph{a posteriori} error estimate we
have the following result:
\begin{theorem}[A posteriori error]\label{thm:aposteriori}


  Assume that Lemma \ref{lem:cont_ext} and the growth condition in
  Corollary \ref{cor:growth} hold.  Given $M_\omega$ sample paths $\{
  \bar X_{t_n}(\omega_i)\}_{i=1}^{M_\omega}$ from the Poisson bridge
  tau-leap method, with deterministic time steps $\{t_n\}_{n=0}^{N}$,
  the relative error can be approximated by
  
  
  \begin{equation}\label{eq:aposteriori}
    \begin{aligned}
      \expt \bigl[ g(Z_T)-g(\bar Z_T) \bigr]
      = &\
      \frac{1}{M_\omega} \sum_{i=1}^{M_\omega} 
      I(\omega_i)
      + \Ordo{\gamma^{-1}\tau_{max}^2} + \varepsilon_d + \varepsilon_\omega,
    \end{aligned}
  \end{equation}
  where
  \begin{equation*}
    I :=
    \sum_{n=0}^{N-1}
    \frac{\tau_n}{2}
    \tilde \varphi_{t_{n+1}} \cdot
    \biggl(
    \sum_{j=1}^M
    \Bigl( \tilde a_j(\bar Z_{t_{n+1}}) - 
    \tilde a_j(\bar Z_{t_n})
    \Bigr) 
    \tilde \nu_j
    \biggr).
  \end{equation*}
  Here, the time steps are $\tau_n:=t_{n+1}-t_n$ and the dual weight
  $\tilde \varphi_{t_n} \in \rset^d$ is defined by the backward
  problem
  \begin{equation}\label{eq:dual_equations}
    \begin{aligned}
      \tilde \varphi_{t_n} &= J_{t_n}
      \tilde \varphi_{t_{n+1}} 
      , \quad t_n=0,\ldots,N-1,\\
      \tilde \varphi_T &= g'(Z_T),
    \end{aligned}
  \end{equation}
  with $J_{t_n} \in \rset^{d\times d}$ defined by
  \begin{equation}\label{eq:variation}
    J_{t_n} := 
    Id + \sum_{j=1}^M \tilde \nu_j \cdot \nabla \tilde a_j(\bar Z_{t_n}) 
    \left( 
      \tau_n +  \frac{1}{2\sqrt{\tilde a_j(\bar Z_{t_n})}} \Delta \bar W_{t_n}^j  
    \right) {\one}_{\tilde a(\bar Z_{t_n})>0},
  \end{equation}    
  where
  \begin{equation}\label{eq:aux_increment}
    \Delta \bar W_{t_n}^j := \frac{ \Delta Y_{t_n}^j/\gamma - \tau_n
      \tilde a_j ( \bar
      Z_{t_n} ) }{\sqrt{\tilde a_j ( \bar Z_{t_n} ) }},
  \end{equation}
  are approximate Wiener increments, defined for $\tilde a(\bar Z_{t_n})>0$.
  The process $\bar Z_{t_n}$ has increments $\Delta Y_{t_n}^j$, as
  described by \eqref{eq:delta_y} in Section \ref{sec:poisson_bridge}.
  
  Finally, $\varepsilon_\omega$ is the standard Monte Carlo error with
  $\var [\,\varepsilon_\omega\,] = M_\omega^{-1}\var[\,I\,]$, and
  $\varepsilon_d$ is a remainder of diffusion type
  \begin{equation*}
    \varepsilon_d =     
    \sum_{n=0}^{N-1} 
    \frac{\tau_n}{2} 
    \Bigl(
    \partial_z \tilde u(\bar Z_{t_{n+1}},t_{n+1}) -
    \expt \bigl[ \ \tilde \varphi_{t_{n+1}} \bigm| \mathcal{F}_{t_{n+1}} \bigr]
    \Bigr)
    \cdot
    \sum_{j=1}^{M}    
    \Bigl( \tilde a_j(\bar Z_{t_{n+1}}) - 
    \tilde a_j(\bar Z_{t_n})
    \Bigr) 
    \tilde \nu_j,
  \end{equation*}
  where the $\sigma$-algebra $\mathcal{F}_{t}$ is generated by the history
  of $\bar Z_{t}$ up to time $t$.

  
\end{theorem}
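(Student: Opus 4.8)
The plan is to transform the exact error representation of Lemma~\ref{lem:tau_exp_error_rep} (in its rescaled form \eqref{eq:tau_exp_error_rep}) into a computable, dual-weighted propensity residual in four moves: discard the negative-population contribution, reduce the interior integrand by two successive Taylor expansions combined with the Poisson-bridge structure of the continuous tau-leap path, replace the non-computable value-function gradient by the discrete dual $\tilde\varphi$, and finally replace the expectation by a Monte Carlo average over the sampled paths. First I would note that, exactly as in the proof of Theorem~\ref{thm:rel_error_bnd} (the computation leading to \eqref{eq:prob} and the lines following it), the term in \eqref{eq:tau_exp_error_rep} carrying the indicator $\one_{\{\bar Z_t\in\zset_-^d/\gamma\}}$ is bounded by a quantity decaying faster than any power of $\gamma$, hence absorbed into the $\Ordo{\gamma^{-1}\tau_{max}^2}$ remainder. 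What remains is
\begin{equation*}
  \varepsilon=\sum_{j=1}^M\sum_{n=0}^{N-1}\int_{t_n}^{t_{n+1}}
  \expt\Bigl[\bigl(\tilde a_j(\bar Z_t)-\tilde a_j(\bar Z_{t_n})\bigr)\,D_j\tilde u(\bar Z_t,t)\Bigr]\,\ud t+(\text{negligible}).
\end{equation*}

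Next, on each $(t_n,t_{n+1})$ the continuous extension of $\bar Z$ is, conditionally on the step endpoints $\bar Z_{t_n},\bar Z_{t_{n+1}}$, the binomial/Poisson bridge of Remark~\ref{rem:a_bar}, so that $\expt[\bar Z_t-\bar Z_{t_n}\mid\bar Z_{t_n},\bar Z_{t_{n+1}}]=\frac{t-t_n}{\tau_n}(\bar Z_{t_{n+1}}-\bar Z_{t_n})$. I would Taylor expand the polynomial propensity, $\tilde a_j(\bar Z_t)-\tilde a_j(\bar Z_{t_n})=\nabla\tilde a_j(\bar Z_{t_n})\cdot(\bar Z_t-\bar Z_{t_n})+\cdots$, use $D_j\tilde u(\bar Z_t,t)=\partial_z\tilde u(\bar Z_t,t)\cdot\tilde\nu_j+\tfrac12\tilde\nu_j^T\partial_z^2\tilde u(\cdot)\tilde\nu_j$ together with $\partial_z\tilde u(\bar Z_t,t)=\partial_z\tilde u(\bar Z_{t_{n+1}},t_{n+1})+\cdots$, take the conditional expectation on the endpoints, and integrate in $t$; since $\int_{t_n}^{t_{n+1}}\frac{t-t_n}{\tau_n}\,\ud t=\tau_n/2$ this yields the leading contribution
\begin{equation*}
  \expt\Biggl[\sum_{n=0}^{N-1}\frac{\tau_n}{2}\,\partial_z\tilde u(\bar Z_{t_{n+1}},t_{n+1})\cdot\sum_{j=1}^M\bigl(\tilde a_j(\bar Z_{t_{n+1}})-\tilde a_j(\bar Z_{t_n})\bigr)\tilde\nu_j\Biggr],
\end{equation*}
after replacing $\nabla\tilde a_j(\bar Z_{t_n})\cdot(\bar Z_{t_{n+1}}-\bar Z_{t_n})$ by $\tilde a_j(\bar Z_{t_{n+1}})-\tilde a_j(\bar Z_{t_n})$. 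Every discarded term (higher-order propensity terms, the Hessian term, the space--time expansion of $\partial_z\tilde u$) is of the type already controlled in the proof of Theorem~\ref{thm:rel_error_bnd}: combining the moment bounds there on $(\bar Z_t-\bar Z_{t_n})^{\alpha_j}$, the scaling $\tau=h\gamma^{-\delta}$, and the $\gamma$-uniform bounds on $\tilde u,\partial_z\tilde u,\partial_z^2\tilde u$ from Corollary~\ref{cor:growth}, one checks that they sum to $\Ordo{\gamma^{-1}\tau_{max}^2}$.

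Then I would replace the non-computable gradient $\partial_z\tilde u(\bar Z_{t_{n+1}},t_{n+1})$ by the discrete dual $\tilde\varphi_{t_{n+1}}$. The matrix $J_{t_n}$ in \eqref{eq:variation} is obtained by differentiating one step of \eqref{eq:tau_leap_rescaled} with respect to its initial value once the Poisson count is written as $\Delta Y_{t_n}^j/\gamma=\tilde a_j(\bar Z_{t_n})\tau_n+\sqrt{\tilde a_j(\bar Z_{t_n})}\,\Delta\bar W_{t_n}^j$, with $\Delta\bar W_{t_n}^j$ as in \eqref{eq:aux_increment} held fixed: differentiating $z\mapsto z+\sum_j\tilde\nu_j\bigl(\tilde a_j(z)\tau_n+\sqrt{\tilde a_j(z)}\,\Delta\bar W^j\bigr)$ gives precisely \eqref{eq:variation}. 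Hence the backward recursion \eqref{eq:dual_equations} is the discrete adjoint of the first-variation flow of the scheme, so that $\tilde\varphi_{t_{n+1}}$ is a pathwise flow derivative and, by the Markov property, $\expt[\tilde\varphi_{t_{n+1}}\mid\mathcal F_{t_{n+1}}]$ is the gradient at $(\bar Z_{t_{n+1}},t_{n+1})$ of the value function of the Gaussian-incremented tau-leap scheme. Since $A_n:=\tfrac{\tau_n}{2}\sum_j(\tilde a_j(\bar Z_{t_{n+1}})-\tilde a_j(\bar Z_{t_n}))\tilde\nu_j$ and $\partial_z\tilde u(\bar Z_{t_{n+1}},t_{n+1})$ are $\mathcal F_{t_{n+1}}$-measurable, the tower property gives $\expt[A_n\cdot\partial_z\tilde u(\bar Z_{t_{n+1}},t_{n+1})]=\expt[A_n\cdot\tilde\varphi_{t_{n+1}}]+\expt\bigl[A_n\cdot(\partial_z\tilde u(\bar Z_{t_{n+1}},t_{n+1})-\expt[\tilde\varphi_{t_{n+1}}\mid\mathcal F_{t_{n+1}}])\bigr]$; summing over $n$ identifies the first sum with $\expt[I]$ and the second with $\varepsilon_d$. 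Replacing $\expt[I]$ by $M_\omega^{-1}\sum_i I(\omega_i)$ introduces the centered Monte Carlo error $\varepsilon_\omega$ with $\var[\varepsilon_\omega]=M_\omega^{-1}\var[I]$, which gives \eqref{eq:aposteriori}.

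The main obstacle is the book-keeping in the second step: one must verify that \emph{all} the discarded Taylor remainders, summed over the $N\sim T/\tau_{max}$ steps, are of order $\gamma^{-1}\tau_{max}^2$ uniformly in $\gamma$, and this is exactly where the weighted a priori estimates of Lemma~\ref{lem:growth}/Corollary~\ref{cor:growth} and the balance $\tau=h\gamma^{-\delta}$ are essential, mirroring the argument in the proof of Theorem~\ref{thm:rel_error_bnd}. Note that $\varepsilon_d$ is not claimed to be of higher order; it is the genuine remainder of the estimate, and its ``diffusion type'' name reflects that, under the Berry--Ess\'en/chemical-Langevin heuristic used to identify $J_{t_n}$, the quantity $\expt[\tilde\varphi_{t_{n+1}}\mid\mathcal F_{t_{n+1}}]$ is the gradient of the value function of a diffusion approximation of the process, so $\varepsilon_d$ measures the weak discretization error of that diffusion and is expected to be small — which is the purpose of the numerical verification in Section~\ref{sec:examples}.
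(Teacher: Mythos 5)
Your proposal is correct and follows essentially the same route as the paper: starting from the error representation of Lemma \ref{lem:tau_exp_error_rep}, obtaining the $\tau_n/2$-weighted propensity residual at $t_{n+1}$ (the paper phrases this as the trapezoidal rule, noting the integrand vanishes at $t_n$, while you justify the same quadrature via the conditional mean of the Poisson bridge), Taylor expanding $\tilde u$ as in Theorem \ref{thm:rel_error_bnd} with Corollary \ref{cor:growth} controlling the remainders, and then swapping $\partial_z\tilde u$ for the discrete dual by the tower property so that the mismatch is exactly $\varepsilon_d$ and the sampling step is $\varepsilon_\omega$. Your additional identification of $J_{t_n}$ as the first variation of the Gaussian-incremented step matches the remark the paper gives immediately after the theorem.
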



\begin{proof}
  
  The proof of the above theorem comes from applying the
  Trapezoidal rule to the error representation formula
  in Lemma \ref{lem:tau_exp_error_rep}, Taylor expanding $\tilde u$ as in the
  first part of the proof of Theorem \ref{thm:rel_error_bnd}, and
  using Corollary \ref{cor:growth}. For given
  time steps $\tau_n$ this gives
  \begin{equation}\label{eq:trapezoidal}
    \begin{aligned}
      \expt \bigl[ & g(Z_T)-g(\bar Z_T) \bigr] =\\
      &= \sum_{j=1}^M \sum_{n=0}^{N-1} \int_{t_{n}}^{t_{n+1}} 
      \expt \Bigl[ 
      \bigl( \tilde a_j( \bar{Z}_s ) - \tilde a_j( \bar{Z}_{t_n} ) \bigr)
      \bigl( \tilde u( \bar Z_s + \tilde \nu_j, s ) - 
      \tilde u( \bar{Z}_s, s ) \bigr)
      \Bigr]
      \ \ud s\\
      &= \sum_{j=1}^M \sum_{n=0}^{N-1} \frac{\tau_n}{2} 
      \expt \Bigl[ \bigl( \tilde a_j( \bar{Z}_{t_{n+1}} ) -
      \tilde a_j( \bar{Z}_{t_n} ) \bigr) 
      \partial_z \tilde u( \bar Z_{t_{n+1}}, t_{n+1} ) \cdot \tilde \nu_j
      \Bigr]\\
      &\qquad\qquad + \Ordo{\gamma^{-1}\tau_n^3}\\
      &=  \sum_{j=1}^M \sum_{n=0}^{N-1} \frac{\tau_n}{2} 
      \expt \Bigl[ \bigl( \tilde a_j( \bar{Z}_{t_{n+1}} ) -
      \tilde a_j( \bar{Z}_{t_n} ) \bigr) 
      \expt \bigl[ \ \tilde \varphi_{t_{n+1}} \bigm|
      \mathcal{F}_{t_{n+1}} \bigr] \cdot \tilde \nu_j
      \Bigr]\\
      &\qquad\qquad + \Ordo{\gamma^{-1}\tau_n^3} + \varepsilon_d\\
      &= \sum_{j=1}^M \sum_{n=0}^{N-1} \frac{\tau_n}{2} 
      \expt \Bigl[ \bigl( \tilde a_j( \bar{Z}_{t_{n+1}} ) -
      \tilde a_j( \bar{Z}_{t_n} ) \bigr) 
      \tilde \varphi_{t_{n+1}} \cdot \tilde \nu_j
      \Bigr] + \Ordo{\gamma^{-1}\tau_n^3} + \varepsilon_d.
    \end{aligned}
  \end{equation}
  
\end{proof}

\begin{remark}

  Note that \eqref{eq:variation} is the approximation of the first
  variation $\partial X_{n+1} / \partial X_{n}$ for the chemical
  Langevin equation,
  \begin{equation}\label{eq:langevin}
     X_{{t_{n+1}}} =  X_{t_n} + \sum_{j=1}^M \nu_j 
    \Big( \tau_n a_j (  X_{t_n} ) + \Delta W_{t_n}^j \sqrt{a_j(
      X_{t_n})} \Big),
  \end{equation}
  where $\Delta W_{t_n}^j \sim N(0,\tau_n)$ are standard Wiener
  increments, and $\tilde \varphi_{t_n}$ is thus an approximation to
  the discrete dual for the corresponding value function. This means
  that the $\varepsilon_d$ term contains errors from both
  approximating the value function and the dual for the Langevin equation.

\end{remark}

\section{Examples}\label{sec:examples}
The goal is here to numerically verify the error representation in
Lemma \ref{lem:tau_exp_error_rep} and the a posteriori error estimate
in Theorem \ref{thm:aposteriori}. This is a fundamental step for
future work developing an appropriate adaptive algorithm for the
Poisson bridge tau-leap method in the spirit of \cite{mstz}.

\subsection{Testing the estimates}\label{sec:testing_estimates}

To test the error representation in Lemma \ref{lem:tau_exp_error_rep} we use
the approximation
\begin{equation*}
  \underbrace{ \expt [ g(X_T) - g( \bar X_T) ] }_{=:lhs} =
  \underbrace{ \sum_{n=0}^{N-1} \sum_{j=1}^M  \expt  \left[ I_{j,n}
    \right] }_{:=rhs} + \Ordo{\tau_{max}^2},
\end{equation*}
where
\begin{equation}
  \begin{aligned}
    I_{j,n} = &\ \frac{\tau_n}{2} 
    \Bigl( a_j ( \bar X_{t_{n+1}} ) - a_j ( \bar X_{t_n} ) \Bigr) \\
    & \times \Bigl( 
    u( \bar X_{t_{n+1}} + \nu_j, t_{n+1}) -
    u( \bar X_{t_{n+1}}, t_{n+1}) \Bigr),
  \end{aligned}
\end{equation}
and show that both $lhs$ and $rhs$ decay as $\Ordo{\tau_{max}}$
asymptotically, and that the efficiency index $lhs/rhs$ approaches
$1$.

To estimate the true value function $u$, we solve the Kolmogorov
backward equation \eqref{eq:kolmogorov}, which by Assumption
\ref{assu:boundedness} will become a $\prod_{i=1}^d ( X^{(i)}_{max}+1
)$ dimensional (stiff) system of ordinary differential equations,
since the number of particles in the chemical system will either be
constant or decrease over time, \eg{} $X^{(i)}_t \in
\{0,1,\ldots,X^{(i)}_{max}\}$ for some upper bound $X_{max}\in
\rset^d$.

 
For large values of $X_{max}$ the discretization is chosen such that
the system is solved for a subset of logarithmically distributed
integers in $[0,X^{(i)}_{max}]$, see Figure
\ref{fig:logarithmic_grid}. Of course, this distribution is in no
sense optimal and ideally a spatially adaptive algorithm should be
used, but it can be expected that points close to any $X^{(i)}=0$ will
have a greater contribution to the error.

\begin{figure}[hbpt]
  \centering
  \includegraphics*[width=0.7\textwidth]{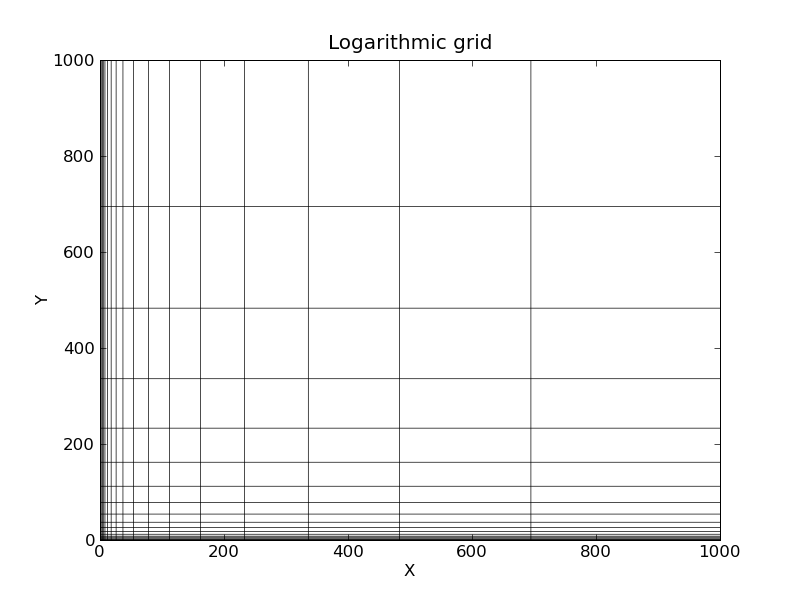}
  \caption{Example of logarithmic grid used for solving the backward
    Kolmogorov equation in $\zset_+^2$.}
  \label{fig:logarithmic_grid}
\end{figure}

Calculating $lhs$ with sufficient accuracy can be very demanding for
problems with many species and a high number of particles. A less
demanding way is to use the approximation
\begin{equation*}
  \begin{aligned}
    lhs &= \expt [ g ( X_T ) - g ( \tilde X_T ) ] 
    + \expt [ g ( \tilde X_T ) - g ( \bar X_T ) ] \\
    &= \underbrace{2 \expt [ g ( \tilde X_T ) -
      g ( \bar X_T ) ] }_{lhs_{approx}} +  \Ordo{\tau_{max}^2},    
  \end{aligned}
\end{equation*}
where the process $\tilde X_t$ is generated by using half the step
size of $\bar X_t$ and the sample path $(\lambda,Y)$ generated by
$\bar X_t$. The estimate $lhs_{approx}$ does not use the value
function at all, and has the sample variance of order $\tau M^{-1}$
compared to
$M^{-1}$
for $lhs$, see \cite{li}.

In practice, the estimate $rhs$ in itself is not much of use since it
requires knowledge of the value function $u$, and thus $rhs$ must be
approximated by computable quantities, \eg{} as in Theorem
\ref{thm:aposteriori}.  For this purpose, let
\begin{equation*}
  rhs_{dual} := 
  \expt\left[ \sum_{n=0}^{N-1} \sum_{j=1}^M \frac{\tau_n}{2} 
    \left( a_j \left( \bar X_{n+1} \right)
      - a_j \left( \bar X_n \right) \right)
    \varphi_n \cdot \nu_j \right],
\end{equation*}
where the approximate discrete dual is given as in Theorem
\ref{thm:aposteriori} but without the scaling factor $\gamma$.  The
accuracy of $rhs_{dual}$ is of great importance to construct a proper
adaptive algorithm, see \eg{} \cite{mstz,msst,stz}. To show this we
use the error density
\begin{equation}\label{eq:error_indicator}
  \rho_{j,n} := \left| \frac{1}{2\tau_n} \expt [ \left( a_j \left( X_{n+1} \right)
      - a_j \left( X_n \right) \right) \varphi_n \cdot \nu_j ] \right|,
\end{equation}
defined for the initial deterministic time steps, \ie{} not the time
steps given by the Poisson bridge tau-leap method in Section
\ref{sec:poisson_bridge}, see \cite{mordecki}. This gives the total
error
\begin{equation*}
  \varepsilon := rhs_{dual} = \sum_{n=0}^{N-1} \sum_{j=1}^M \tau_n^2 \rho_{j,n},
\end{equation*}
and the current work $\it{Work}_{TL} := N$ is then compared with the estimated
work to achieve the same error $\varepsilon$ for an optimal adaptive
mesh
\begin{equation}\label{eq:adaptive_work}
  \it{Work}_a = \left( \sum_{n=0}^{N-1} \sqrt{\rho_n} \tau_n \right)^2 ,
\end{equation}
and a uniform mesh
\begin{equation}\label{eq:uniform_work}
  \it{Work}_u = T \sum_{n=0}^{N-1} \rho_n \tau_n .
\end{equation}

\subsection{Reactive decay}
In this example we have an irreversible reaction where molecules of a
single species spontaneously disappear, possibly into another particle
type.  The chemical reaction for reactive decay (or isomerization) can
be written as
\begin{equation}\label{eq:decay}
  X \to \emptyset
\end{equation}
and is here described by the linear propensity function 
\begin{equation}
  \label{eq:decay_prop}
  a(x) = cx,  
\end{equation}
initial value $X(0) = X_0 \in \zset_+$, and stoichiometric number
$\nu=-1$.  We test four different cases: a high number of particles or
a low number of particles, that becomes negative often or not so often,
see Table \ref{tab:decay}. In Figure \ref{fig:decay_realizations} a
few realizations of $X_t$ for $t\in[0,1]$ are shown for each example.

\begin{table}[hbpt]
  \centering
  \begin{tabular}{c|cccc}
    Example & 1 & 2 & 3 & 4 \\
    \hline
    $X_0$ & 10 & $10^6$ & 10 & $10^6$\\ 
    $c$ & 0.2 & 1.0 & 2.0 & 7.0
  \end{tabular}
  \caption{Reactive decay, \cf{} \eqref{eq:decay} and \eqref{eq:decay_prop}.}
  \label{tab:decay}
\end{table}

As quantities of interest we take the first moments of $X$, \ie{}
$g(x)=x^{mom}$ for $mom=1,2,3$ and with final time $T=1$.  In Figure
\ref{fig:decay_convergence_mom1}, \ref{fig:decay_convergence_mom2} and
\ref{fig:decay_convergence_mom3} the convergence of $lhs_{approx}$,
$rhs$ and $rhs_{dual}$ can be seen for the different moments.  In all
cases we see a linear $O(\tau)$ convergence and in Figure
\ref{fig:decay_efficiency_mom1}, \ref{fig:decay_efficiency_mom2} and
\ref{fig:decay_efficiency_mom3}, we see that the corresponding
efficiency indices stay close to $1$, \ie{} the computable error
approximation in Theorem \ref{thm:aposteriori} agrees well with the
error representation formula in Lemma \ref{lem:tau_exp_error_rep}. Comparing
the current work $W$ with the work estimates for an adaptive mesh in
\eqref{eq:adaptive_work}, and a completely uniform mesh in
\eqref{eq:uniform_work}, shows that for these relatively non-stiff
examples adaptivity will not make any improvement, see Table
\ref{tab:decay_steps}.
In this case, a uniform mesh is thus most
suitable, and the examples are only run for verification purposes.
Also note that the error in Example 2 and 4, \ie{} for a high number of
particles, is almost completely governed by the deterministic error
and require few realizations to achieve a low statistical error of the
estimates $lhs$, $rhs$ and $rhs_{dual}$.

\begin{figure}[hbpt]
  \centering
  \subfigure[Example 1]{
    \includegraphics*[width=0.45\textwidth,viewport=80 260 480 590]
    {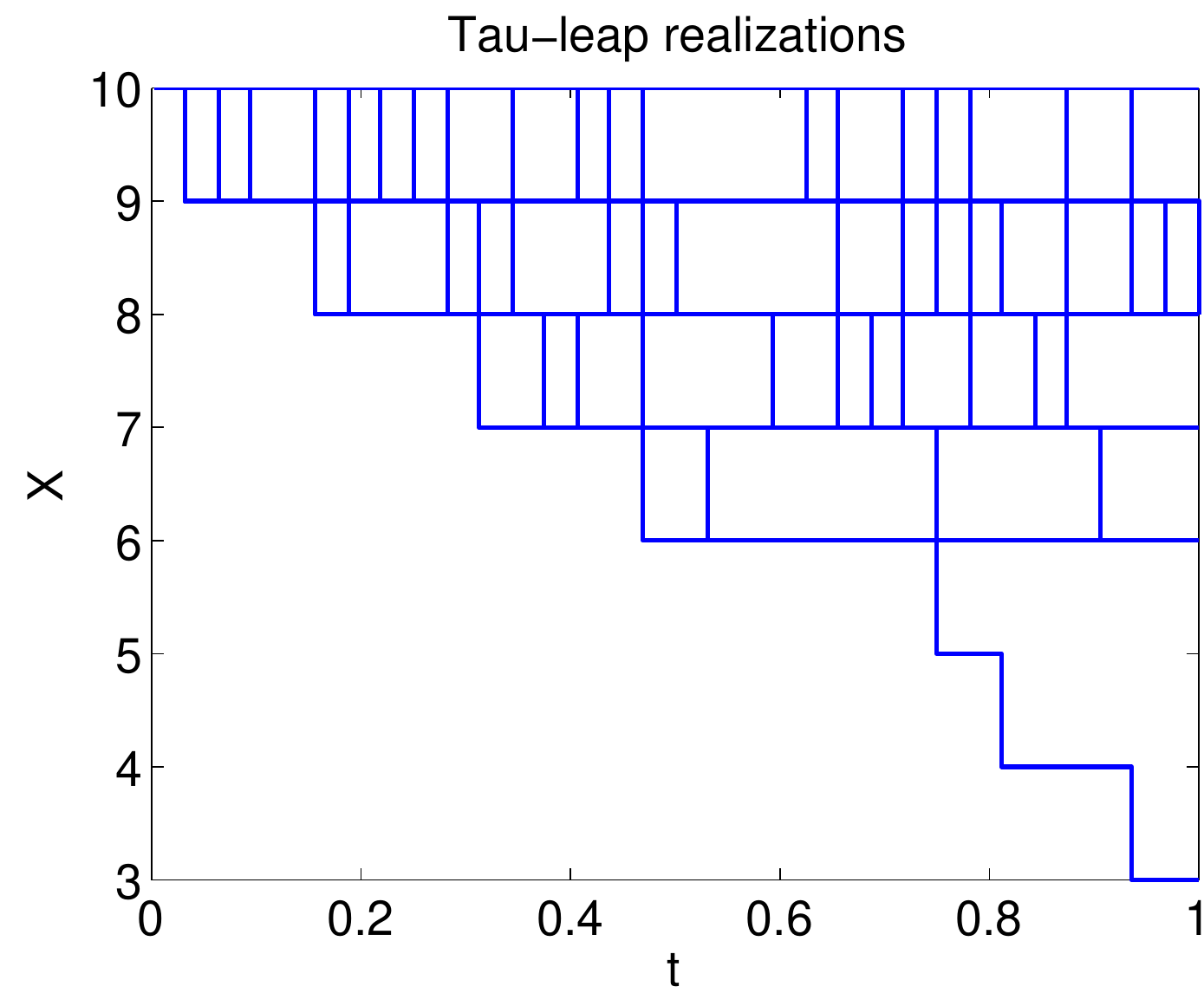}}
  \subfigure[Example 2]{
    \includegraphics*[width=0.45\textwidth,viewport=80 260 480 590]
    {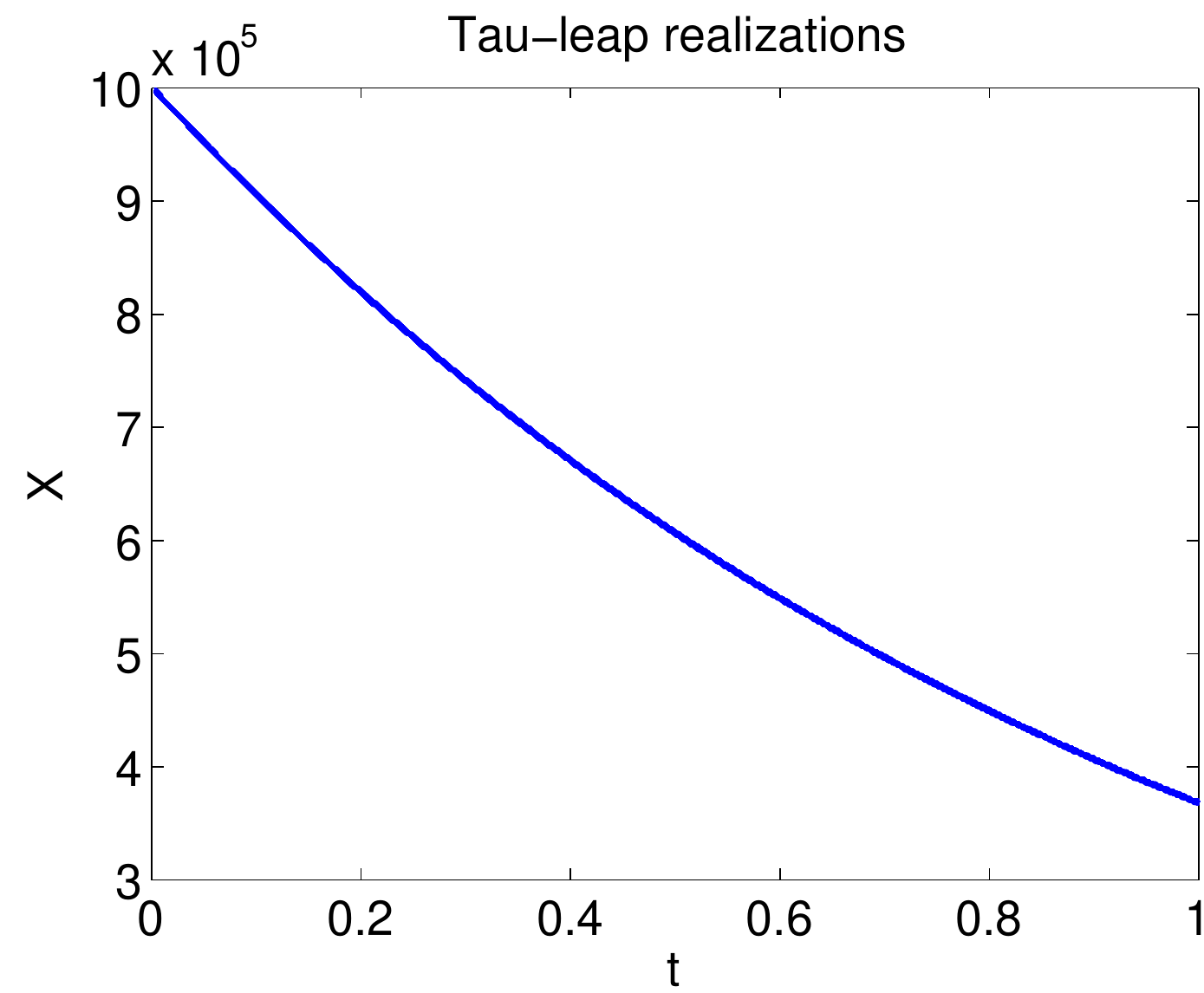}}
  \subfigure[Example 3]{
    \includegraphics[width=0.45\textwidth,viewport=80 260 480 590]
    {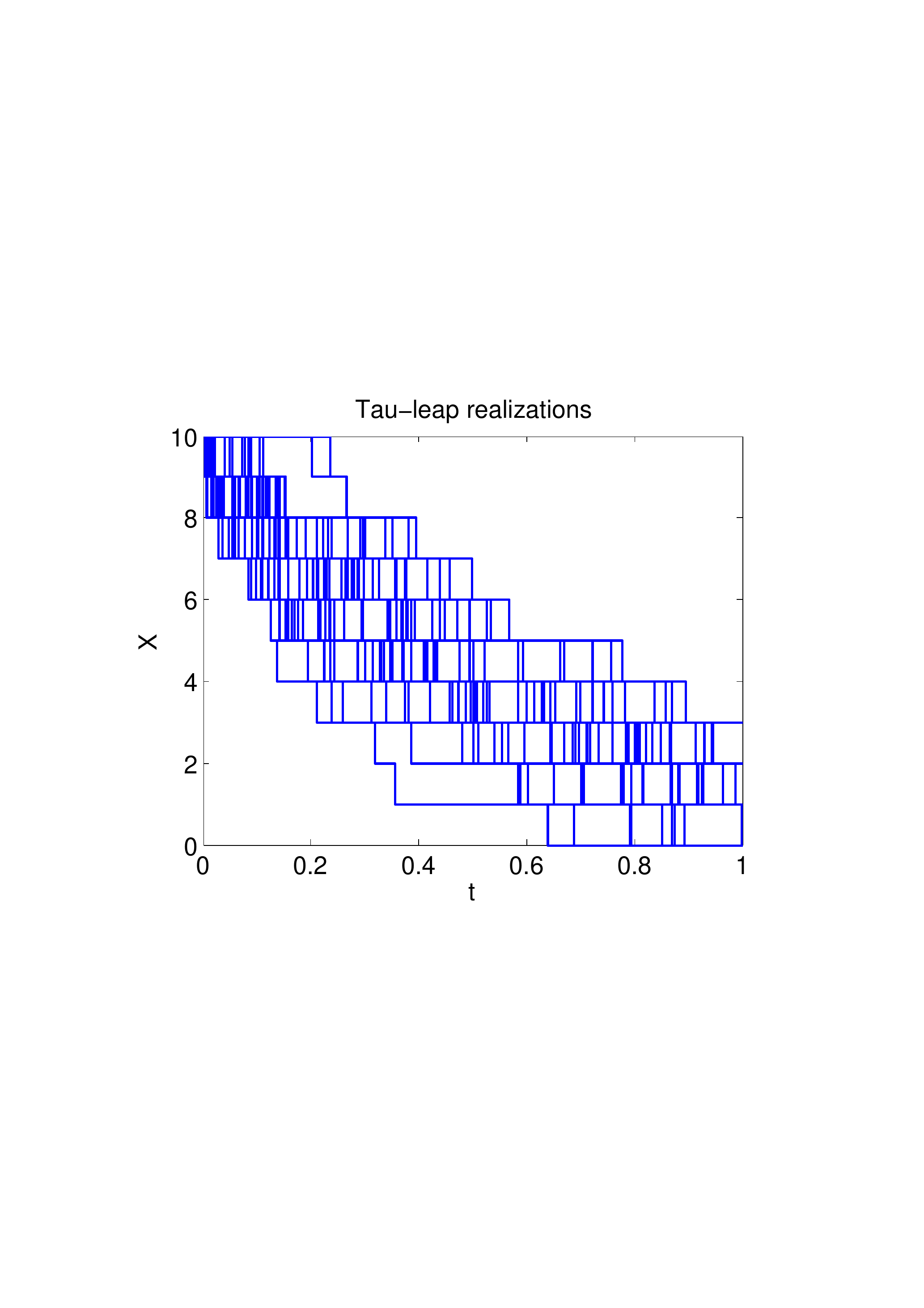}}
  \subfigure[Example 4]{
    \includegraphics[width=0.45\textwidth,viewport=80 260 480 590]
    {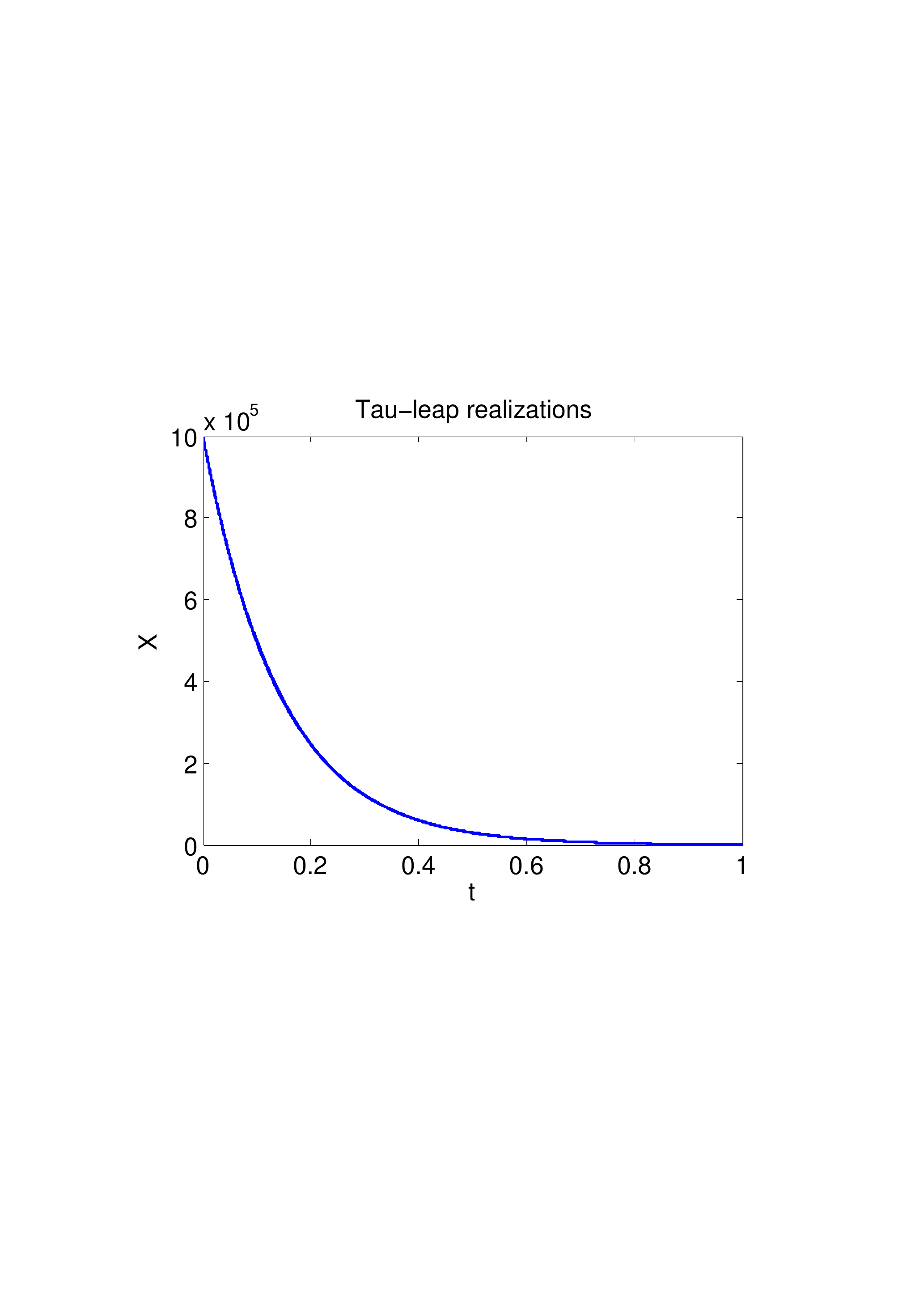}}
  \caption{A few realizations of reactive decay for Example 1 to 4. These
    examples are mostly suited for uniform meshes and adaptivity will
    thus not make any improvement.}
  \label{fig:decay_realizations}
\end{figure}

\begin{figure}[hbpt]
  \centering
  \subfigure[Example 1]{
    \includegraphics*[width=0.45\textwidth,viewport=70 220 500 580]
    {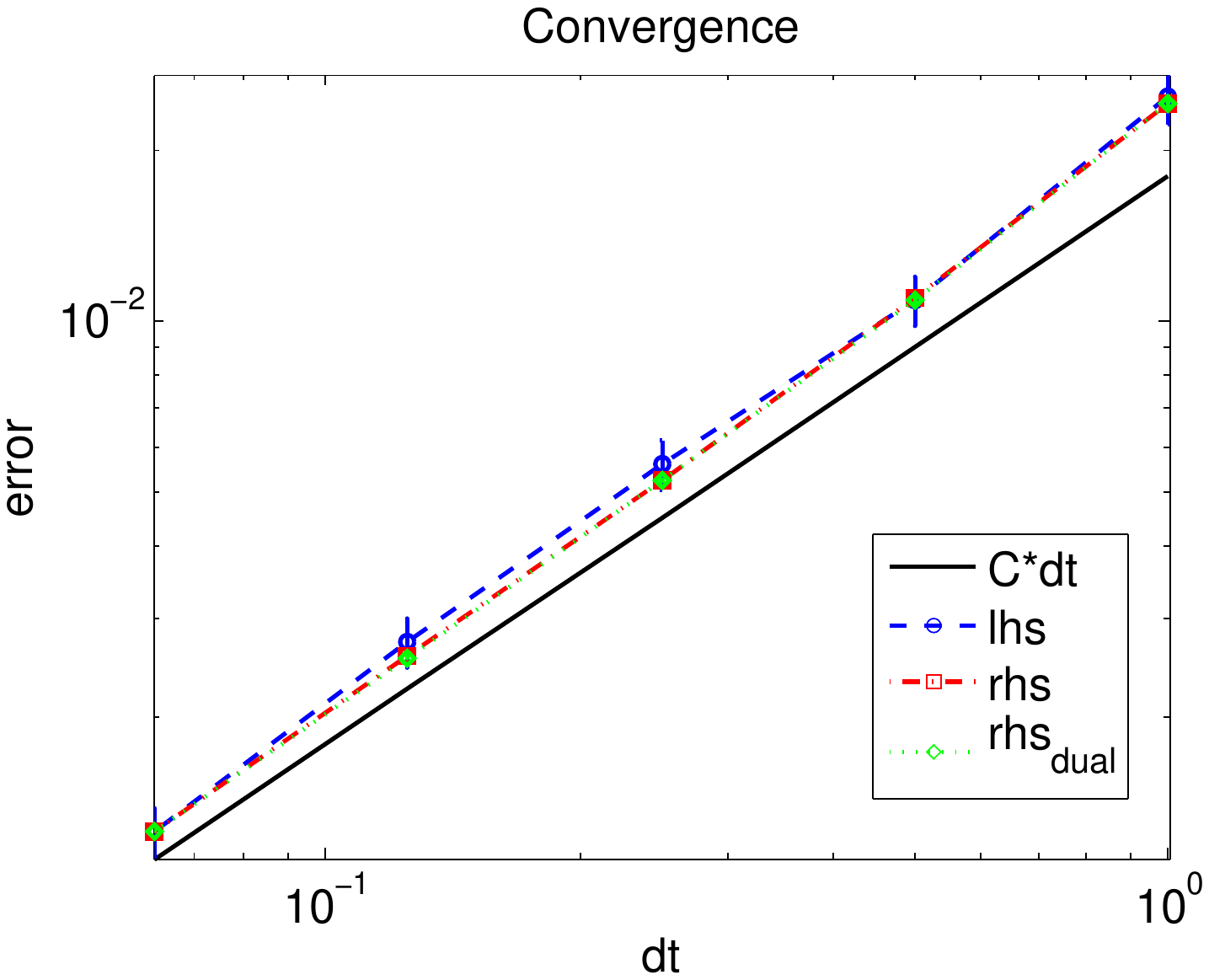}}
  \subfigure[Example 2]{
    \includegraphics*[width=0.45\textwidth,viewport=70 220 500 580]
    {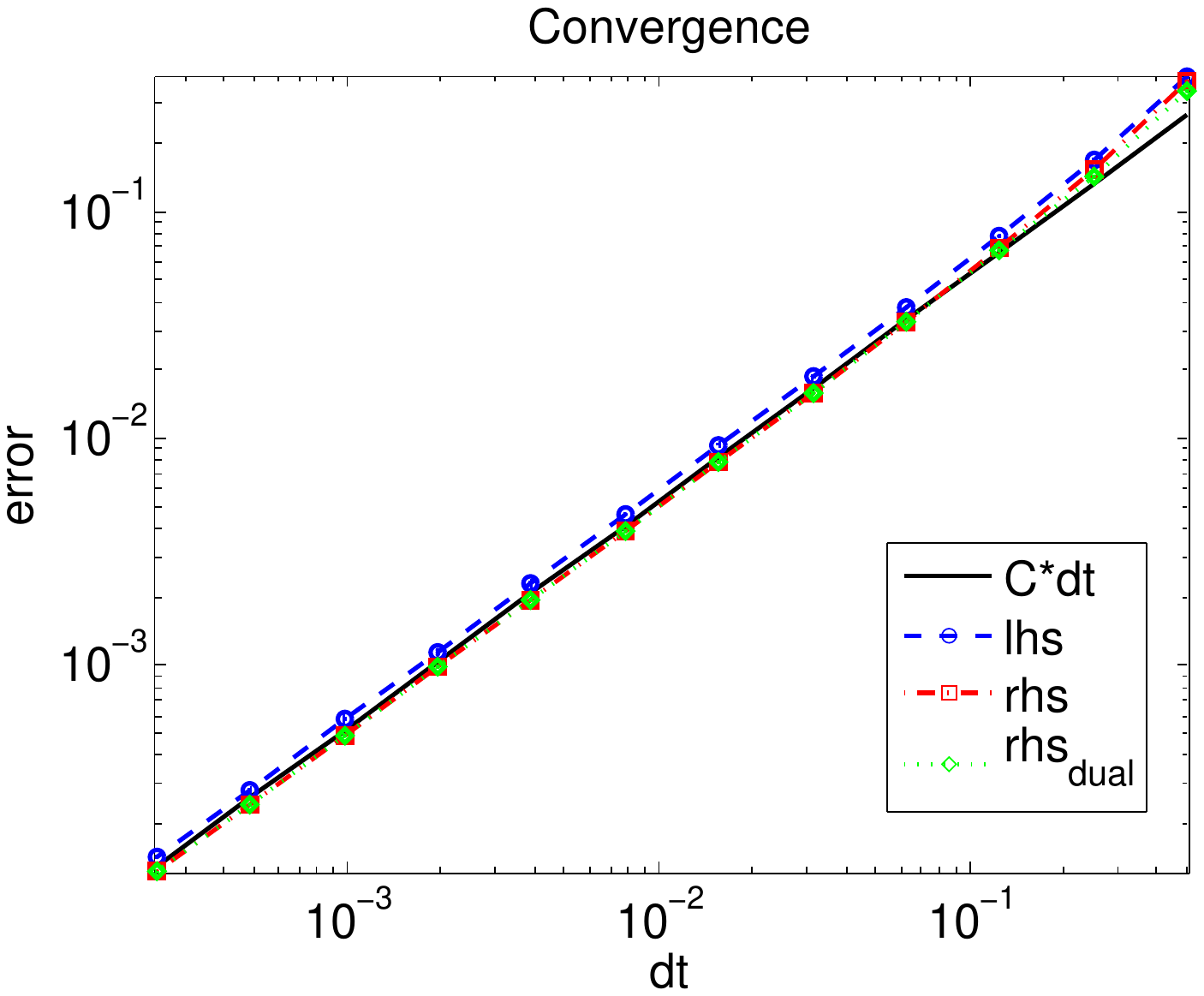}}
  \subfigure[Example 3]{
    \includegraphics[width=0.45\textwidth,viewport=70 220 500 580]
    {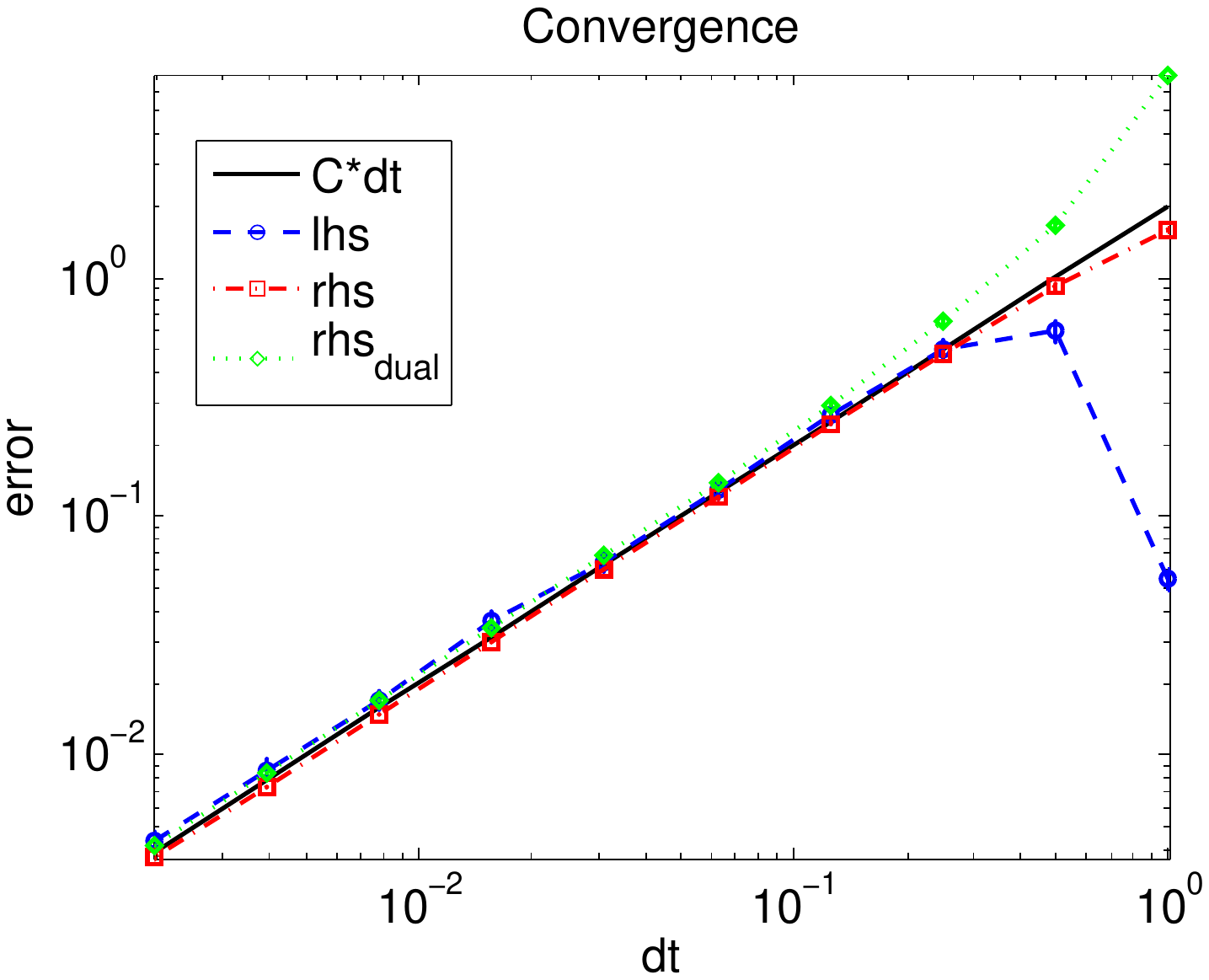}}
  \subfigure[Example 4]{
    \includegraphics[width=0.45\textwidth,viewport=70 220 500 580]
    {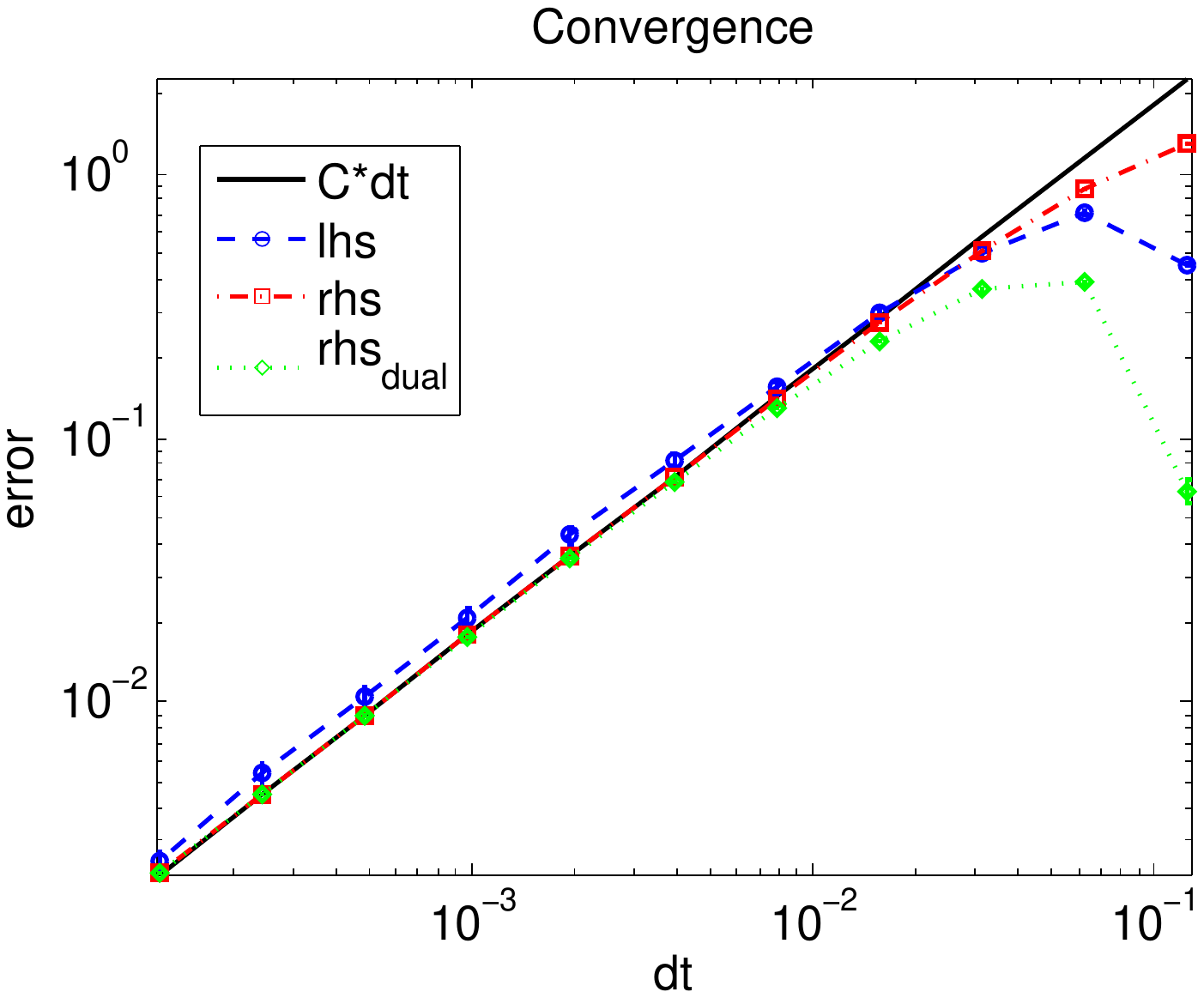}}
  \caption{Reactive decay: Convergence of the errors $lhs_{approx}$,
    $rhs$ and $rhs_{dual}$ with respect to the maximum time step.
    Here, $g(x)=x$ and the data is scaled by dividing with $u(x_0,0)$.
    For each data point the number of samples is controlled by the
    standard error such that $1.96 SE \leq 0.1 \mu$, where $\mu$ is
    the sample mean and $SE$ the standard error of the mean. Bars
    indicate the $95\%$ confidence intervals.}
  \label{fig:decay_convergence_mom1}
\end{figure}
\begin{figure}[hbpt]
  \centering
  \subfigure[Example 1]{
    \includegraphics*[width=0.45\textwidth,viewport=70 220 500 580]
    {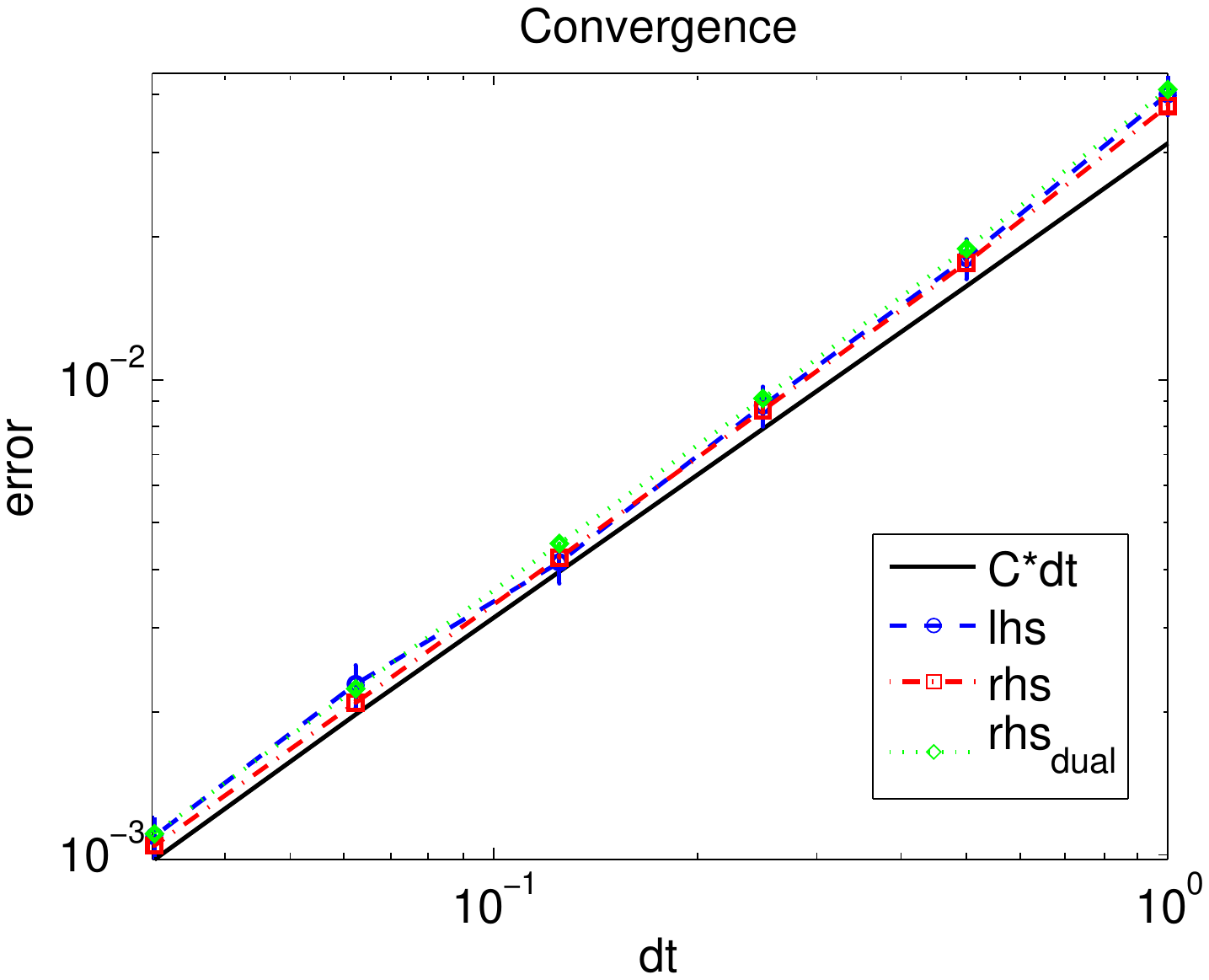}}
  \subfigure[Example 2]{
    \includegraphics*[width=0.45\textwidth,viewport=70 220 500 580]
    {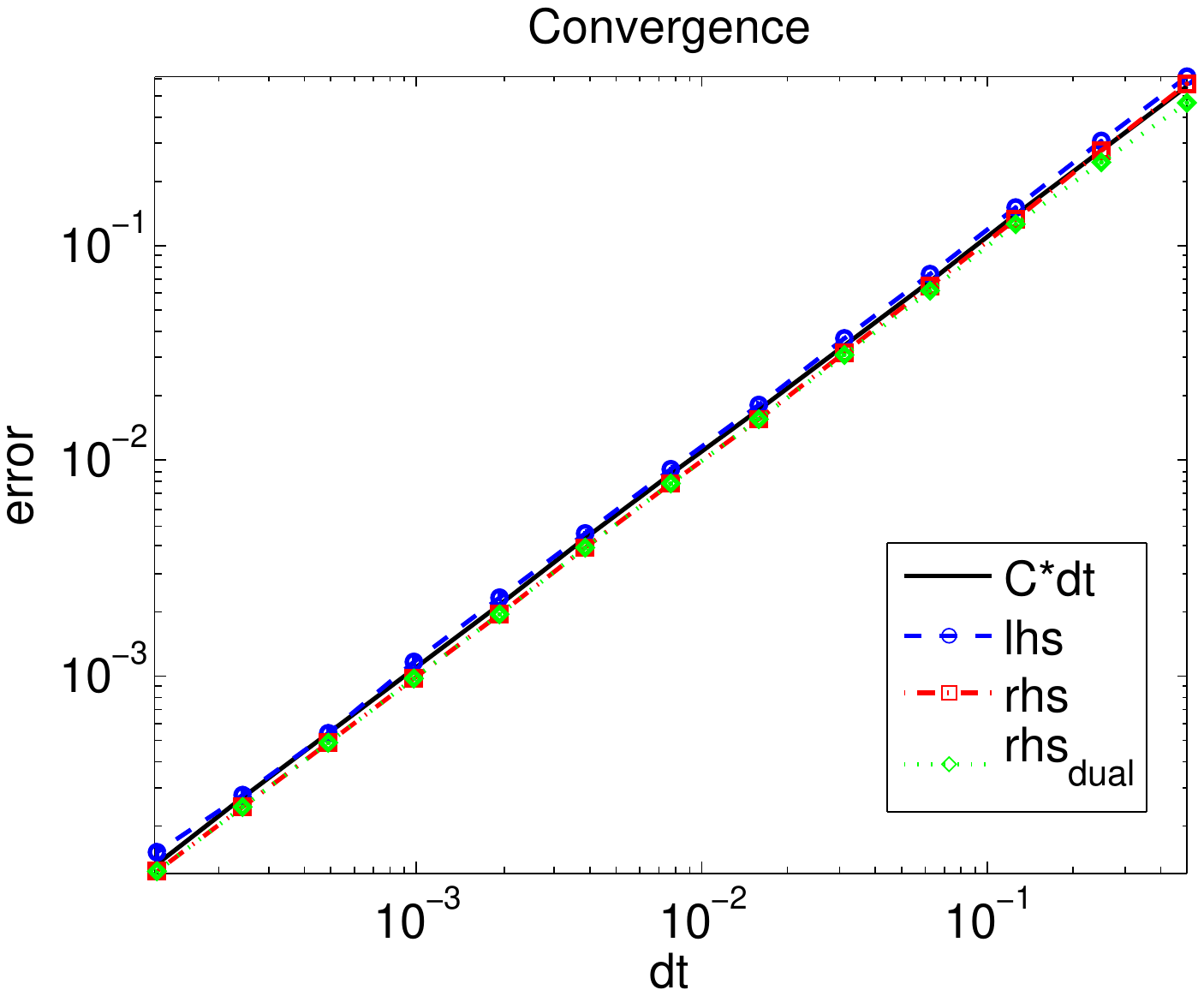}}
  \subfigure[Example 3]{
    \includegraphics[width=0.45\textwidth,viewport=70 220 500 580]
    {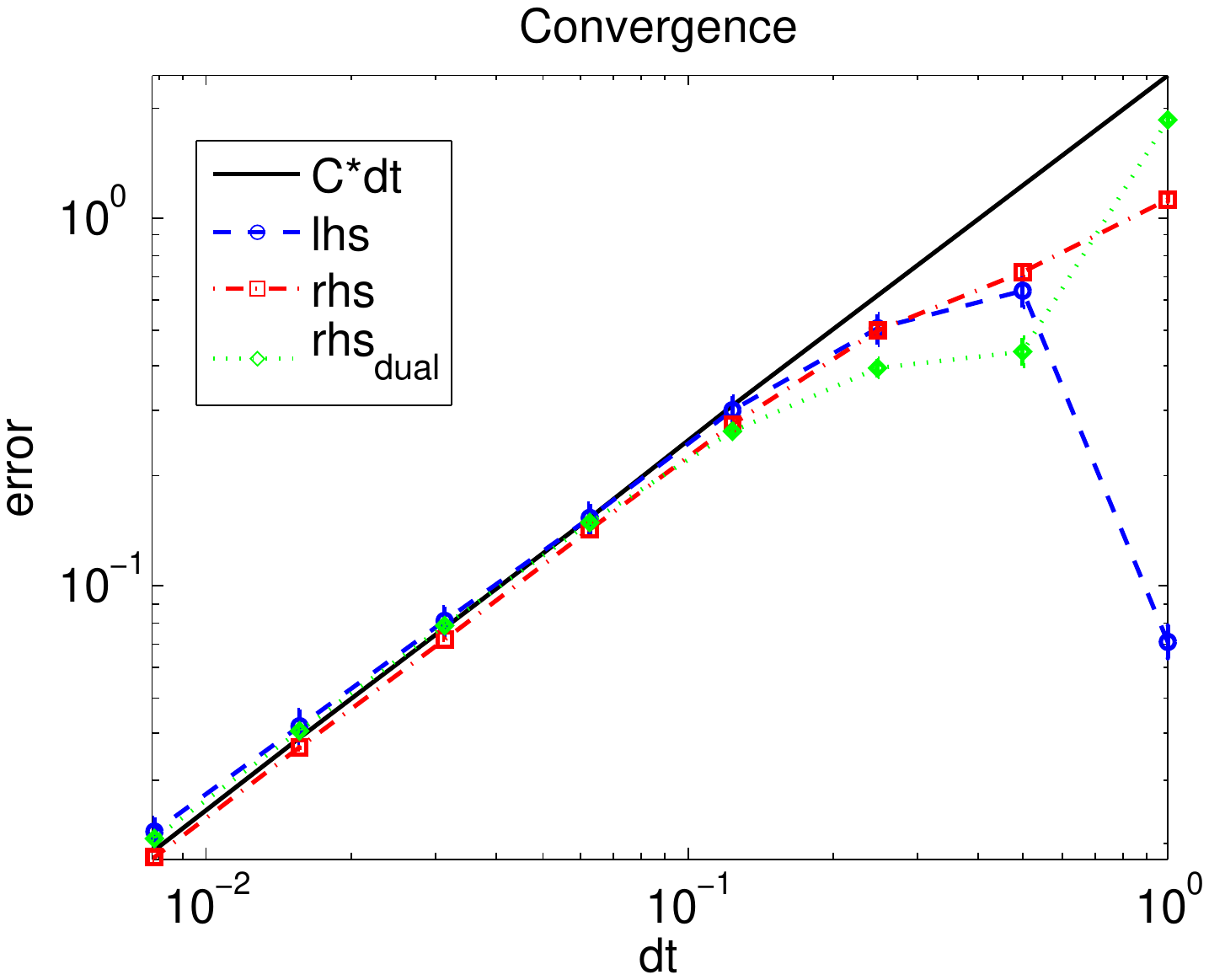}}
  \subfigure[Example 4]{
    \includegraphics[width=0.45\textwidth,viewport=70 220 500 580]
    {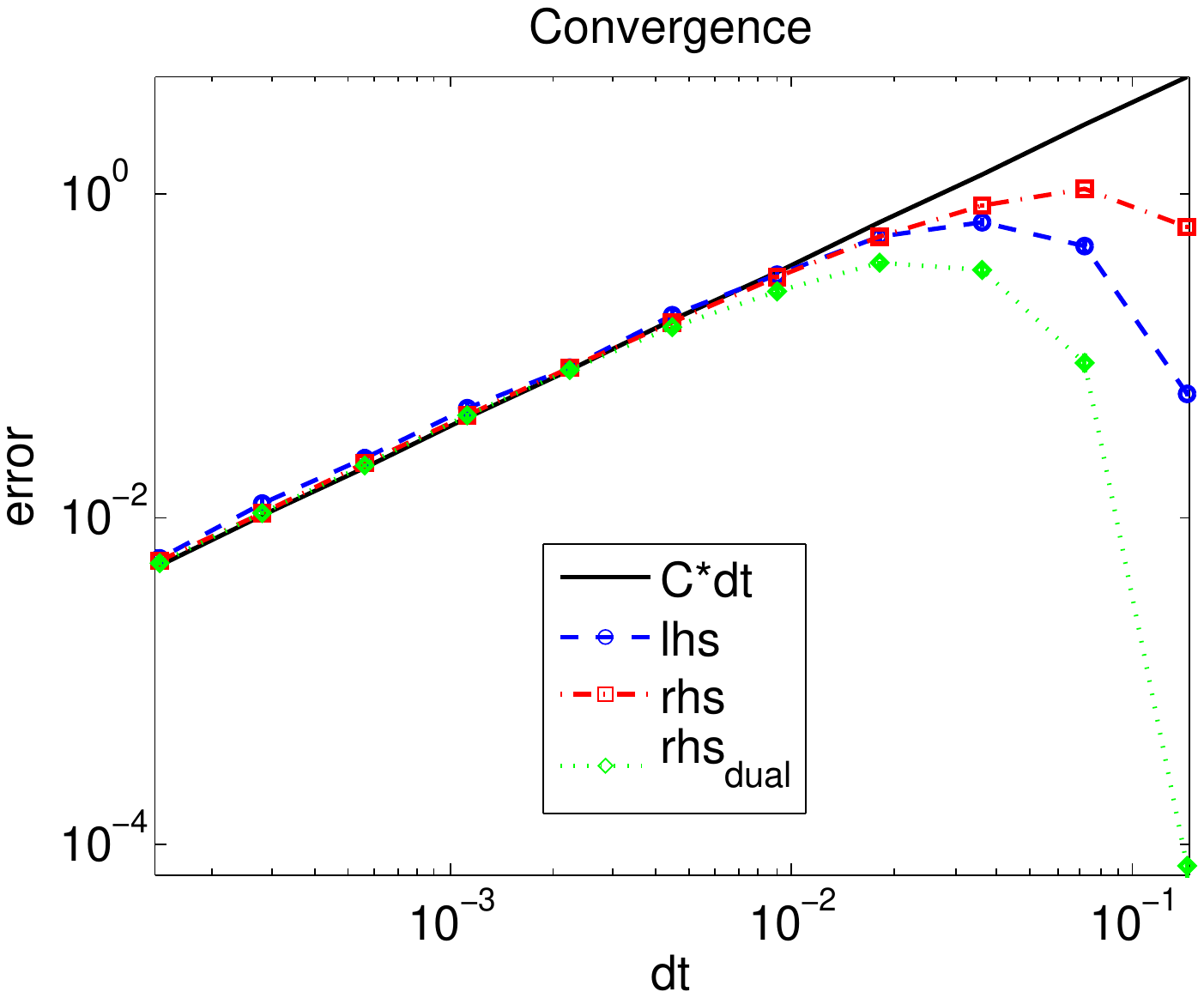}}
  \caption{Reactive decay: convergence for second moment of $X$.}
  \label{fig:decay_convergence_mom2}
\end{figure}
\begin{figure}[hbpt]
  \centering
  \subfigure[Example 1]{
    \includegraphics*[width=0.45\textwidth,viewport=70 220 500 580]
    {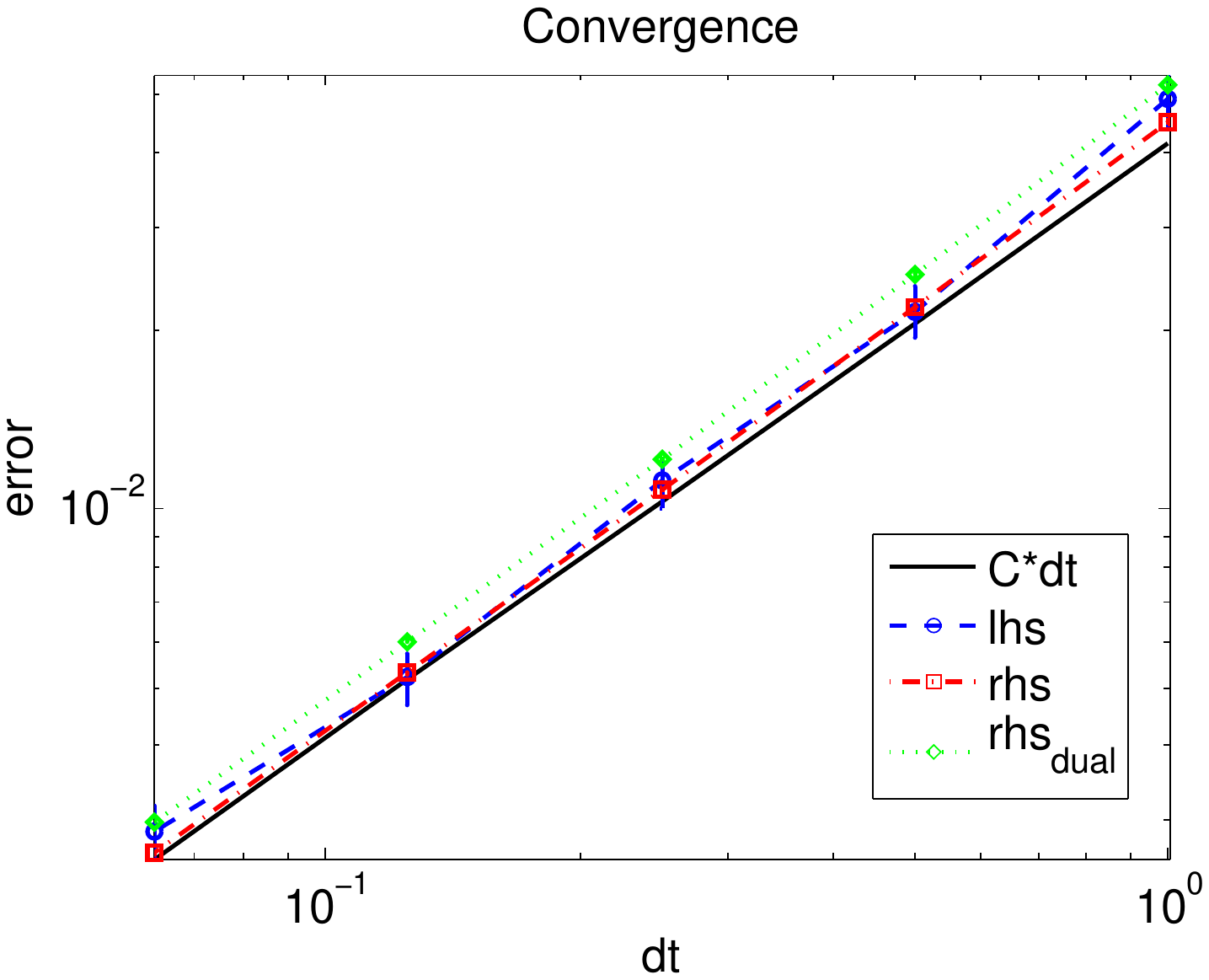}}
  \subfigure[Example 2]{
    \includegraphics*[width=0.45\textwidth,viewport=70 220 500 580]
    {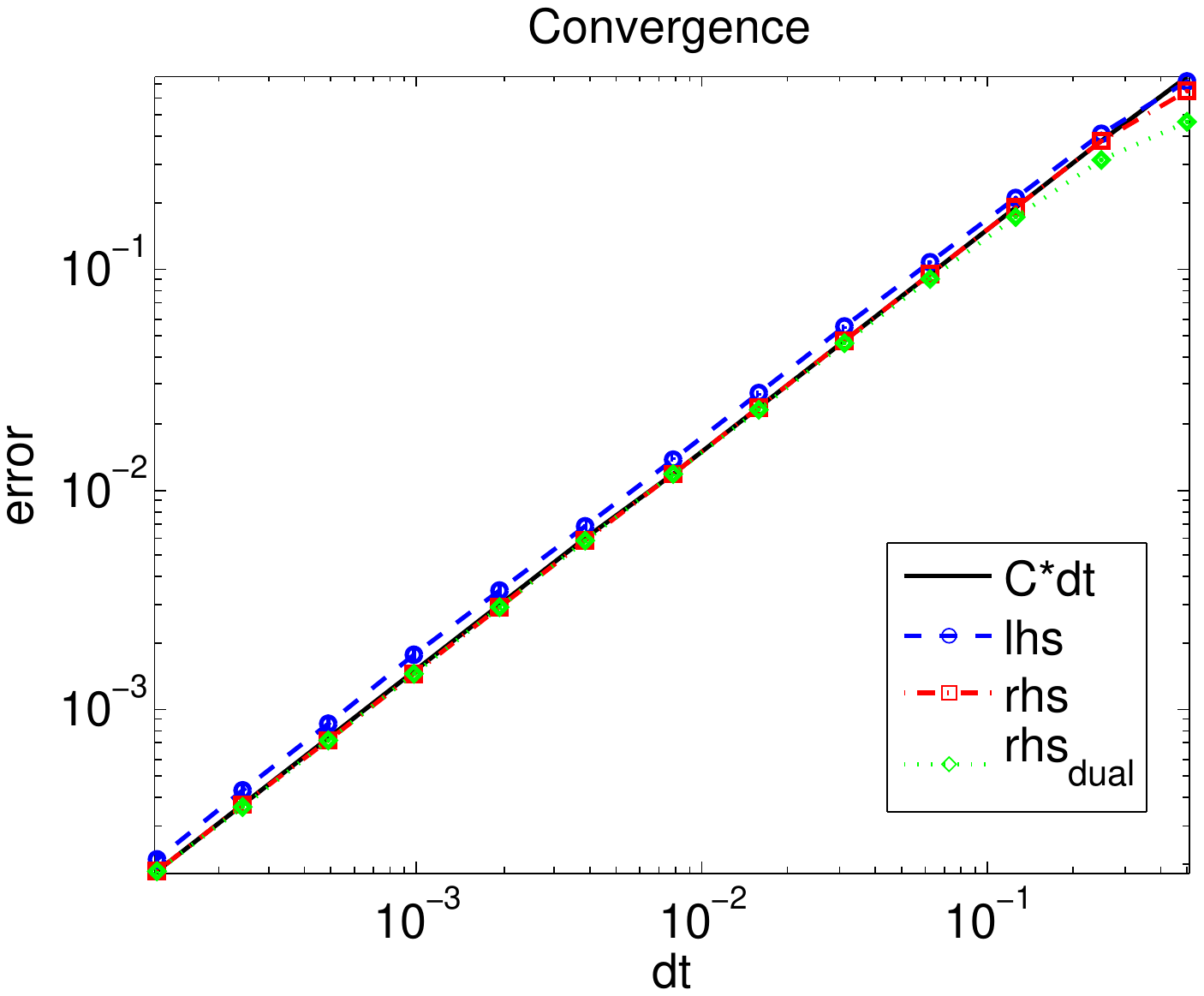}}
  \subfigure[Example 3]{
    \includegraphics[width=0.45\textwidth,viewport=70 220 500 580]
    {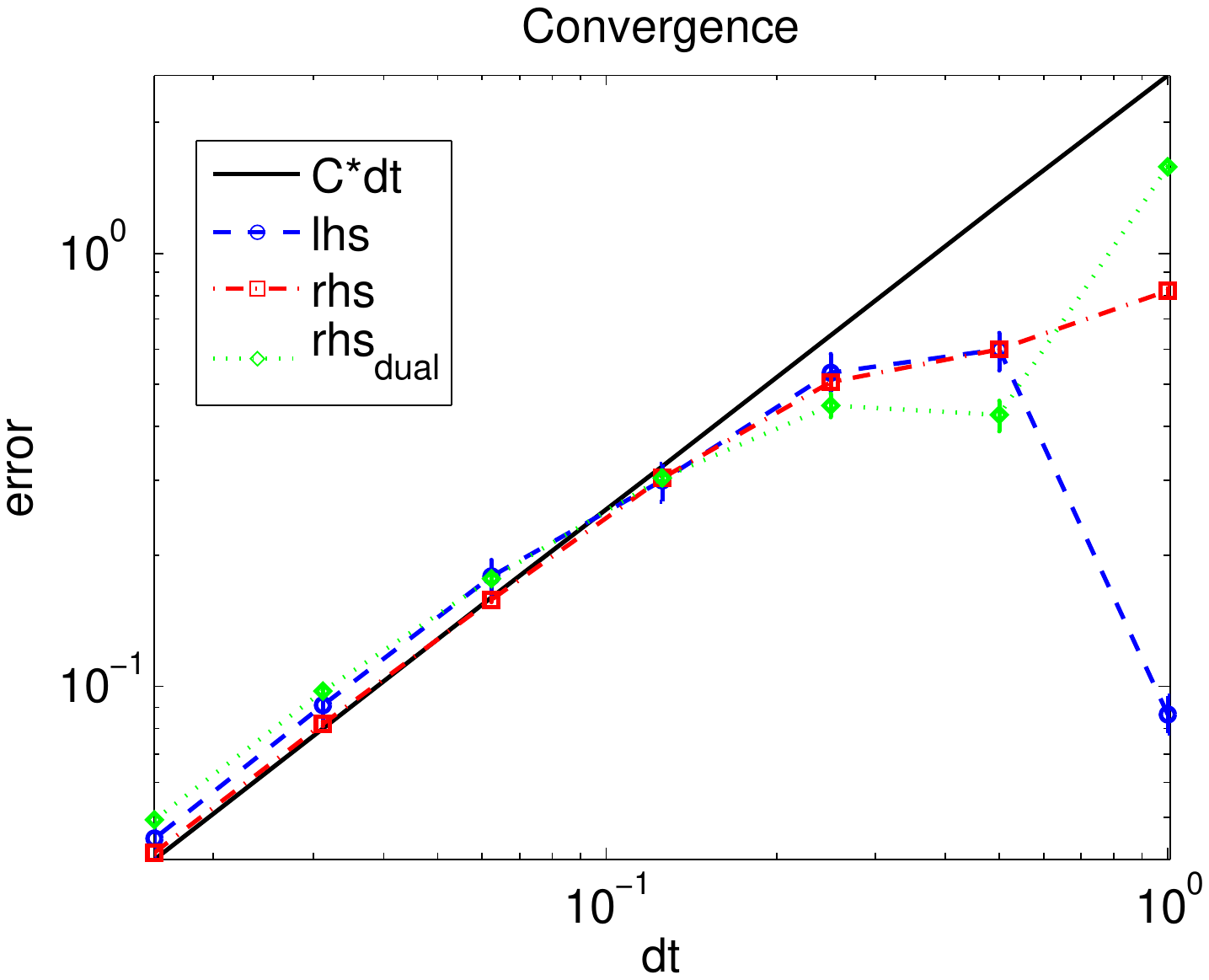}}
  \subfigure[Example 4]{
    \includegraphics[width=0.45\textwidth,viewport=70 220 500 580]
    {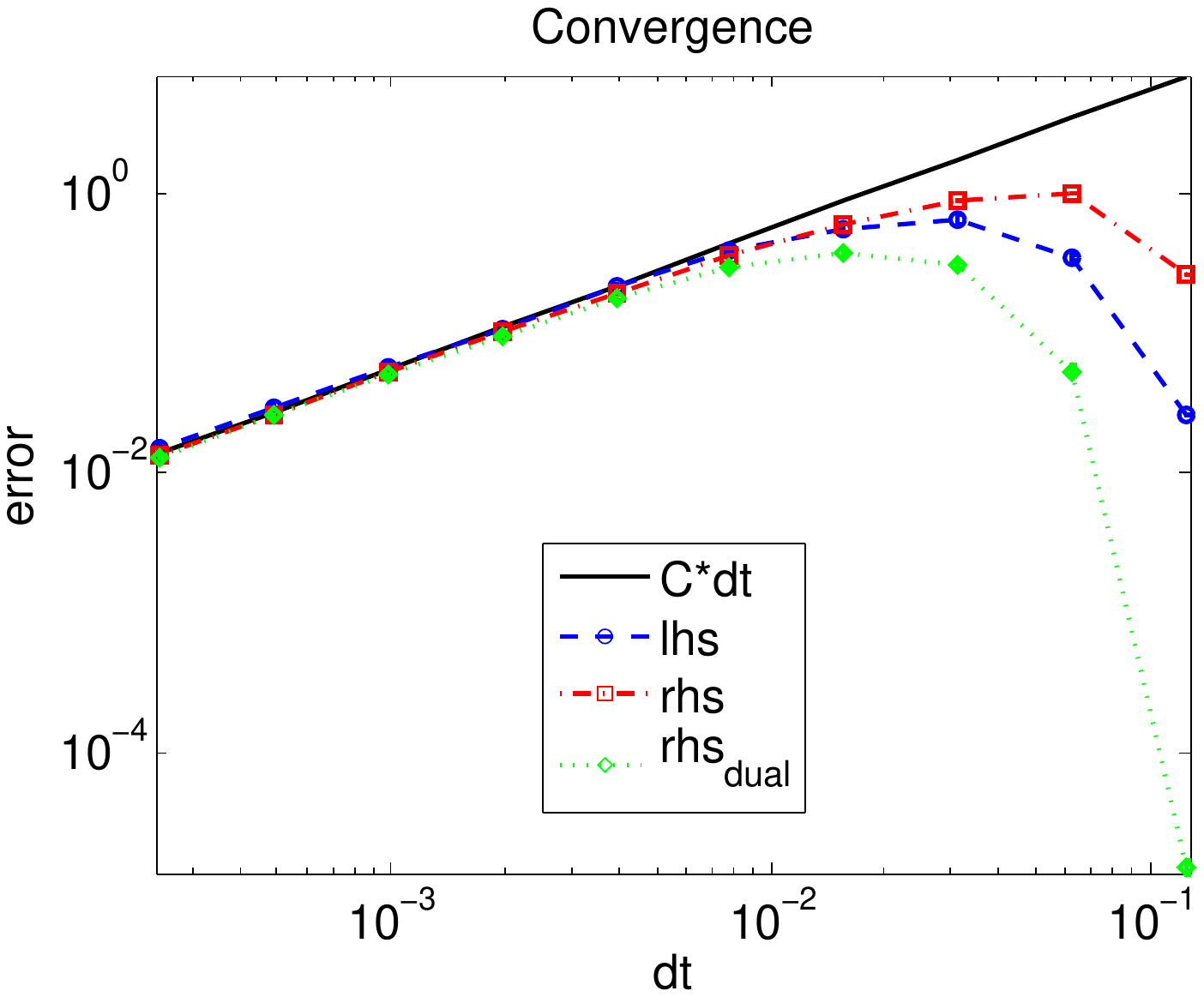}}
  \caption{Reactive decay: convergence for third moment of $X$.}
  \label{fig:decay_convergence_mom3}
\end{figure}

\begin{figure}[hbpt]
  \centering
  \subfigure[Example 1]{
    \includegraphics*[width=0.45\textwidth,viewport=70 220 500 580]
    {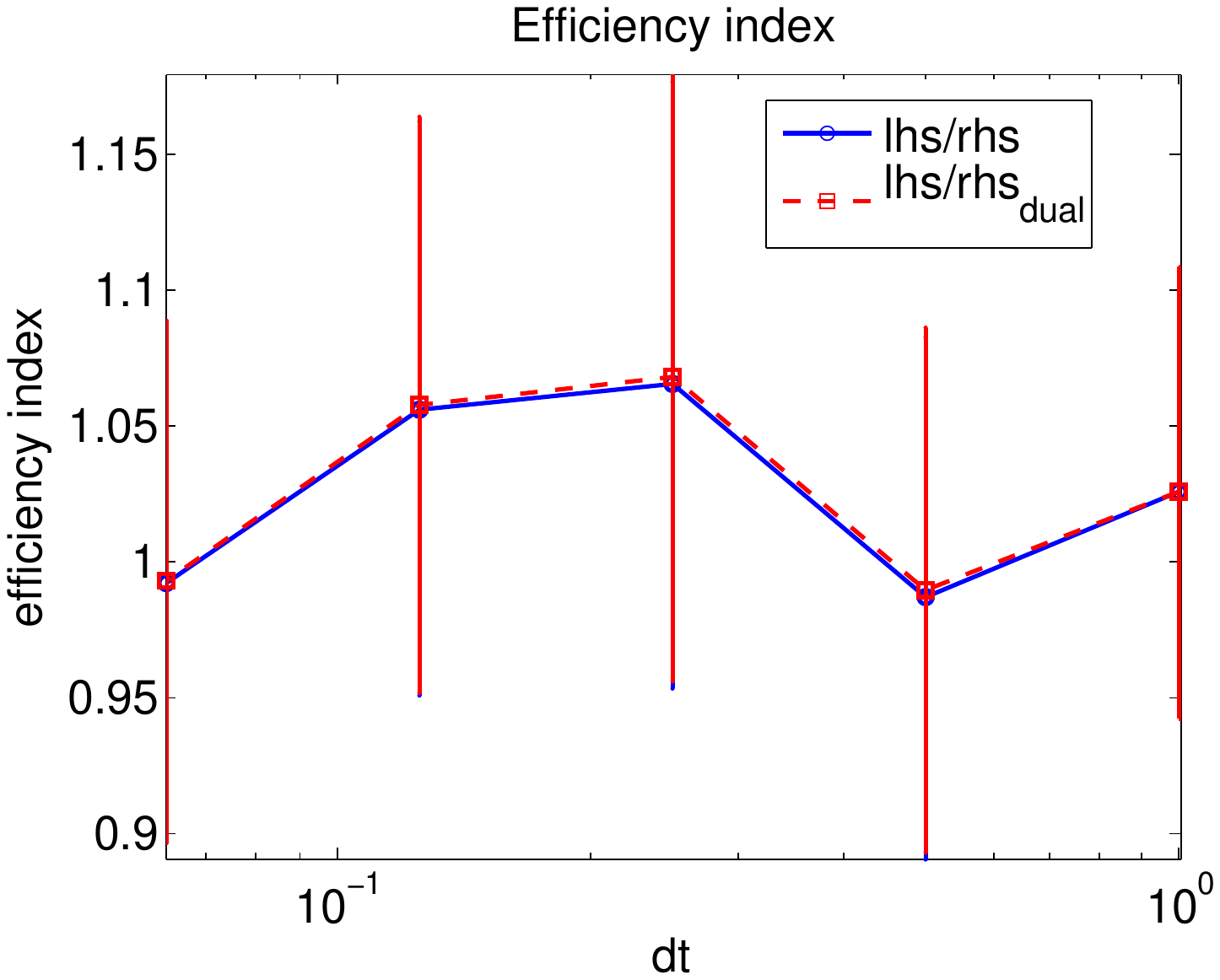}}
  \subfigure[Example 2]{
    \includegraphics*[width=0.45\textwidth,viewport=70 220 500 580]
    {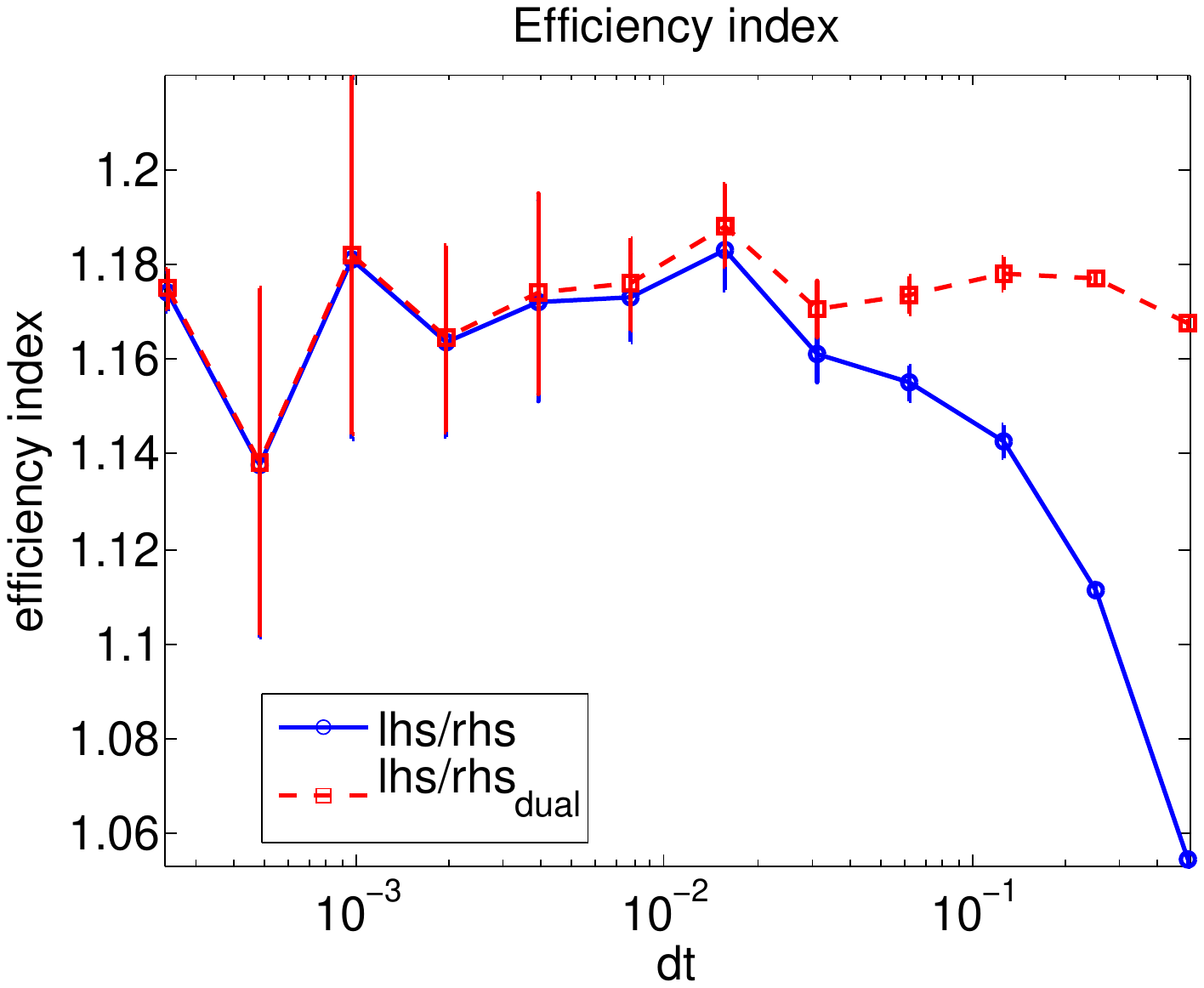}}
  \subfigure[Example 3]{
    \includegraphics[width=0.45\textwidth,viewport=70 220 500 580]
    {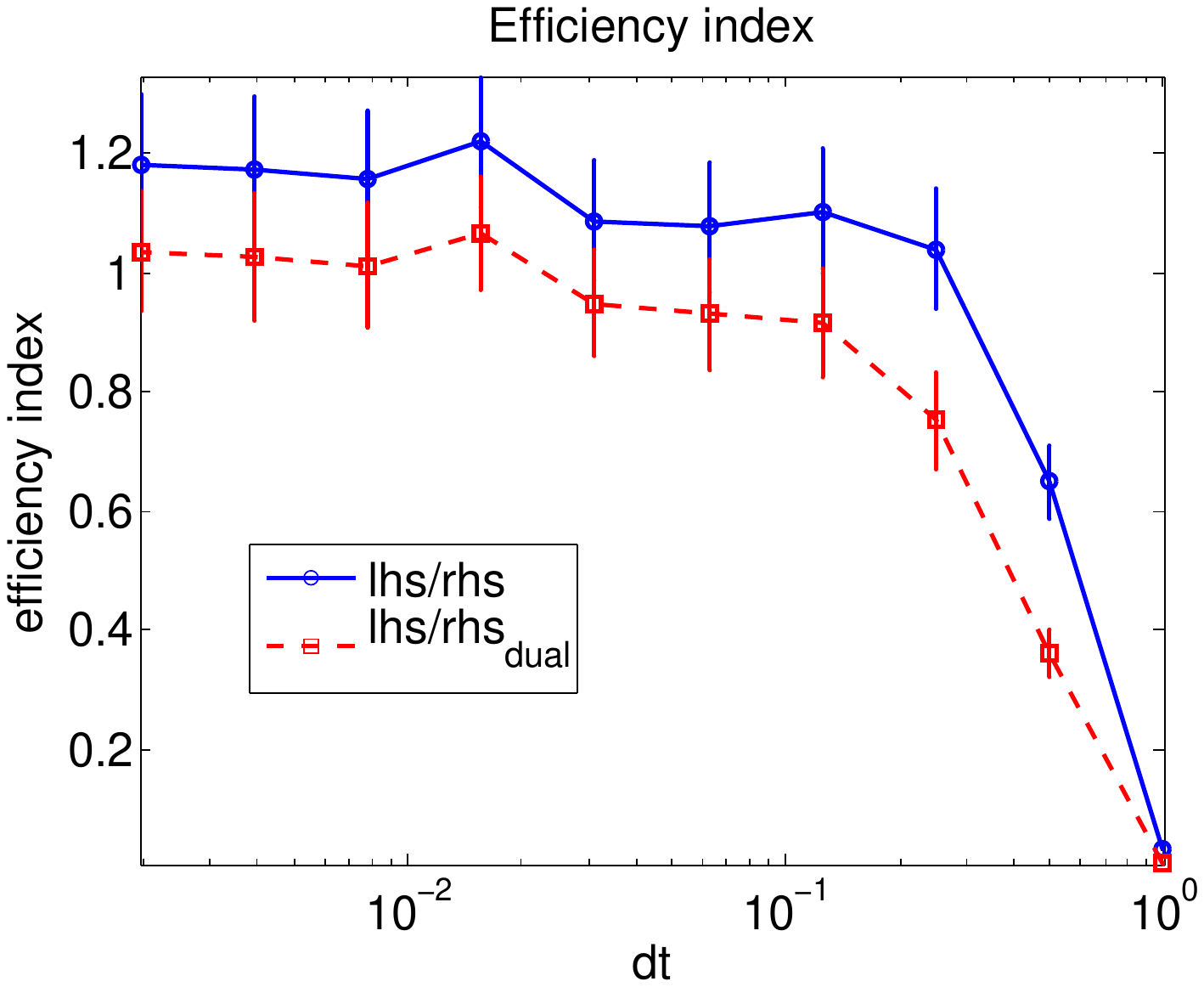}}
  \subfigure[Example 4]{
    \includegraphics[width=0.45\textwidth,viewport=70 220 500 580]
    {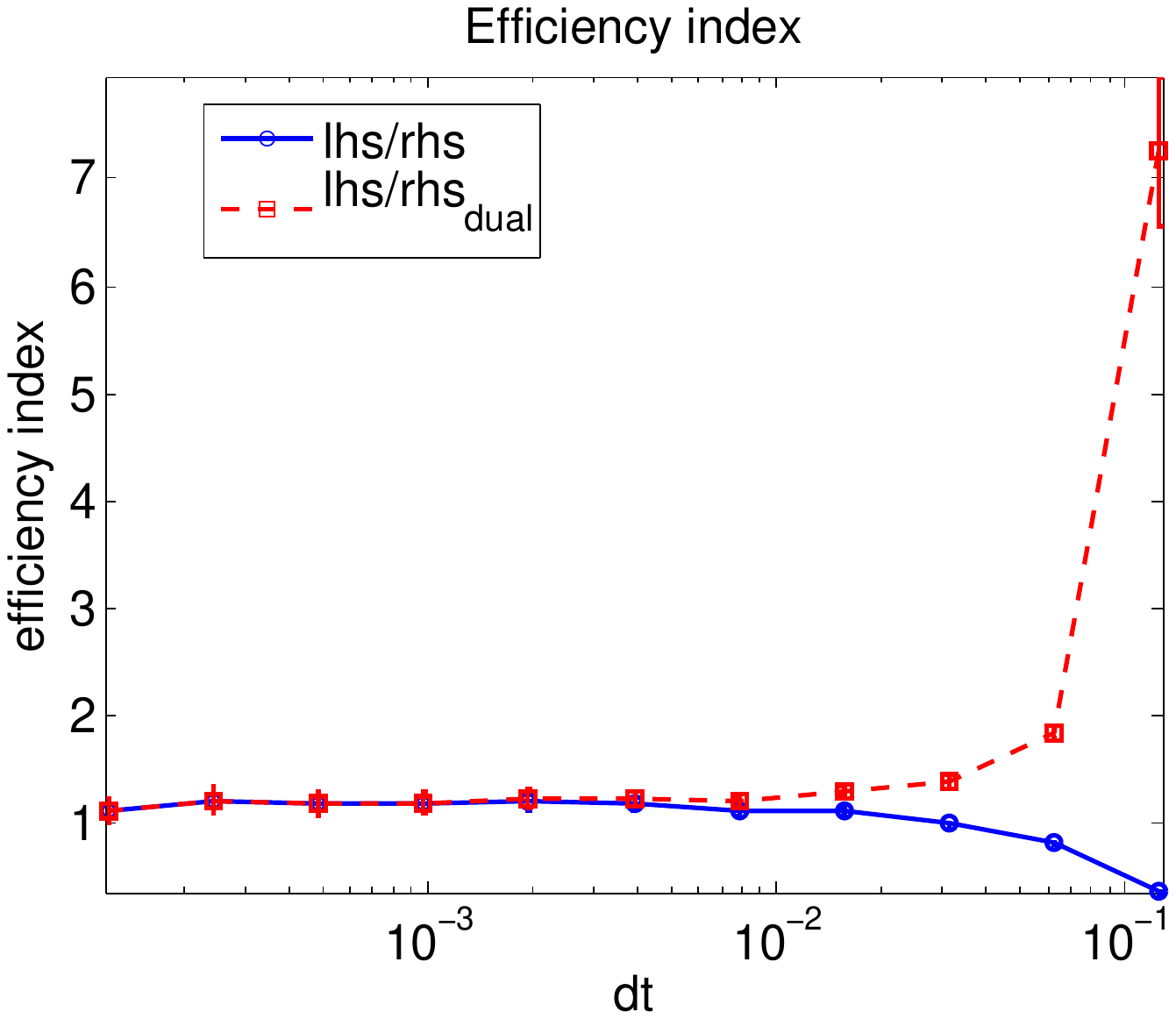}}
  \caption{Reactive decay: Efficiency index for first moment of $X$
    with $95\%$ confidence interval. The mean and confidence intervals were
    calculated by using 200 Bootstrap samples.}
  \label{fig:decay_efficiency_mom1}
\end{figure}
\begin{figure}[hbpt]
  \centering
  \subfigure[Example 1]{
    \includegraphics*[width=0.45\textwidth,viewport=70 220 500 580]
    {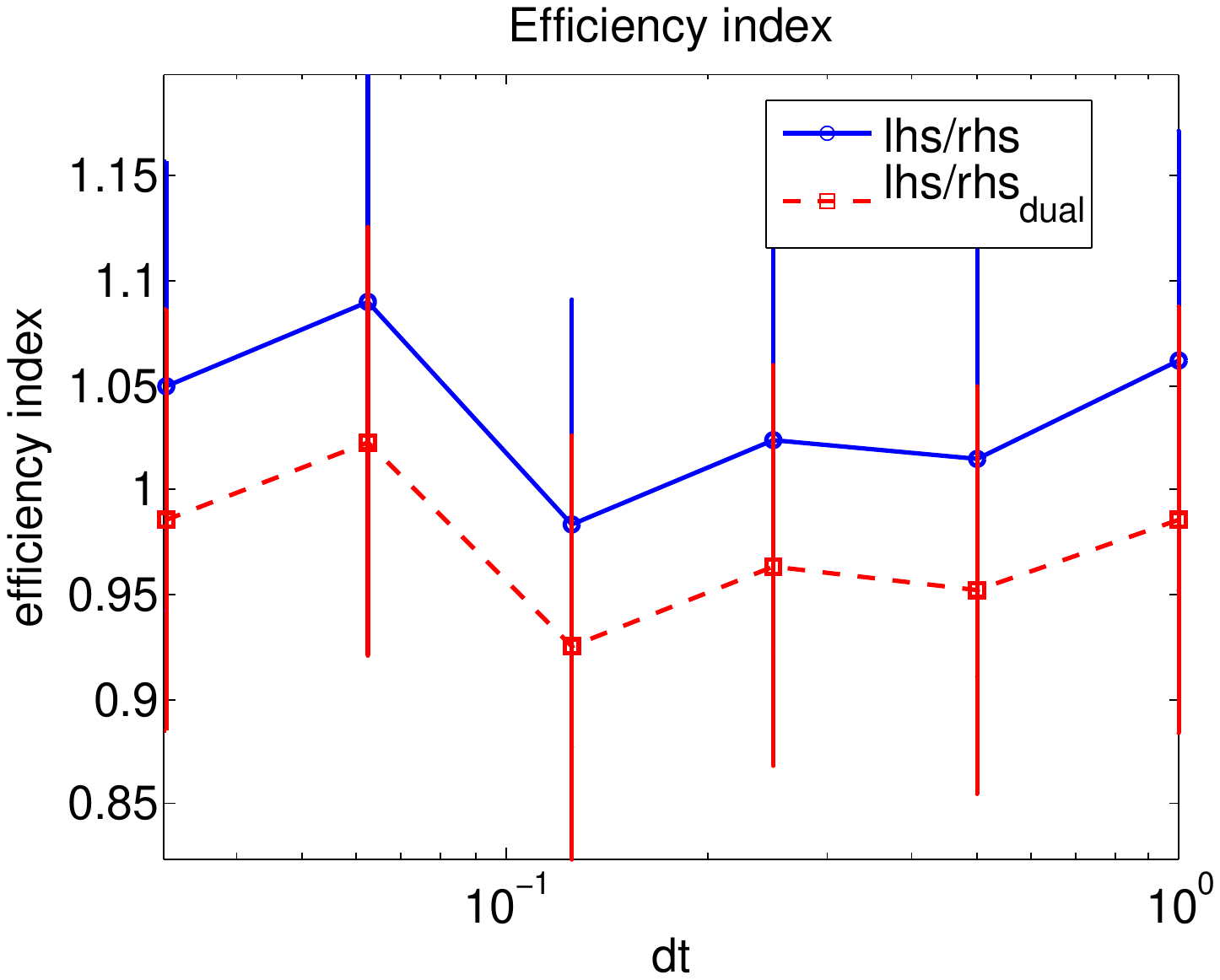}}
  \subfigure[Example 2]{
    \includegraphics*[width=0.45\textwidth,viewport=70 220 500 580]
    {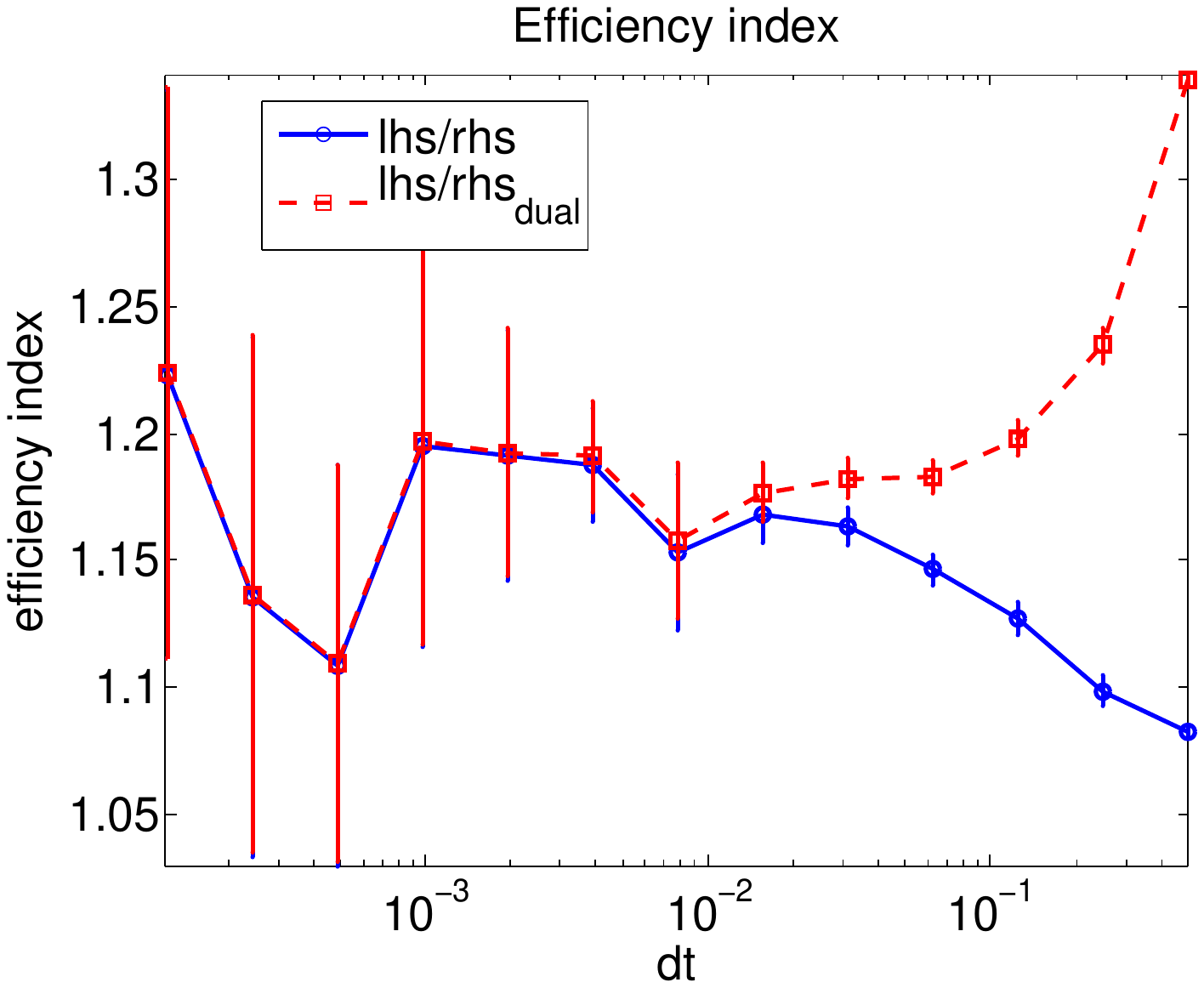}}
  \subfigure[Example 3]{
    \includegraphics[width=0.45\textwidth,viewport=70 220 500 580]
    {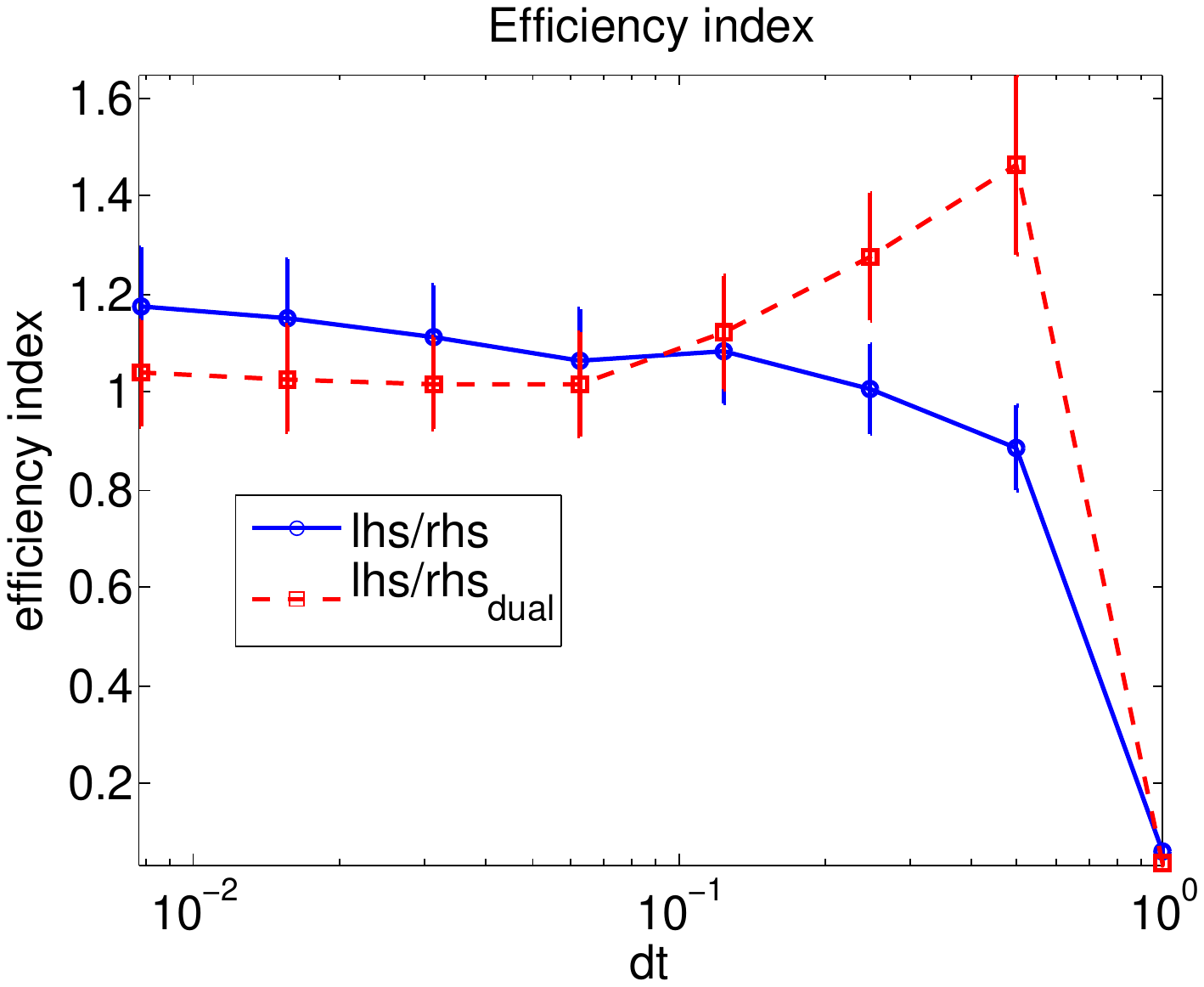}}
  \subfigure[Example 4]{
    \includegraphics[width=0.45\textwidth,viewport=70 220 500 580]
    {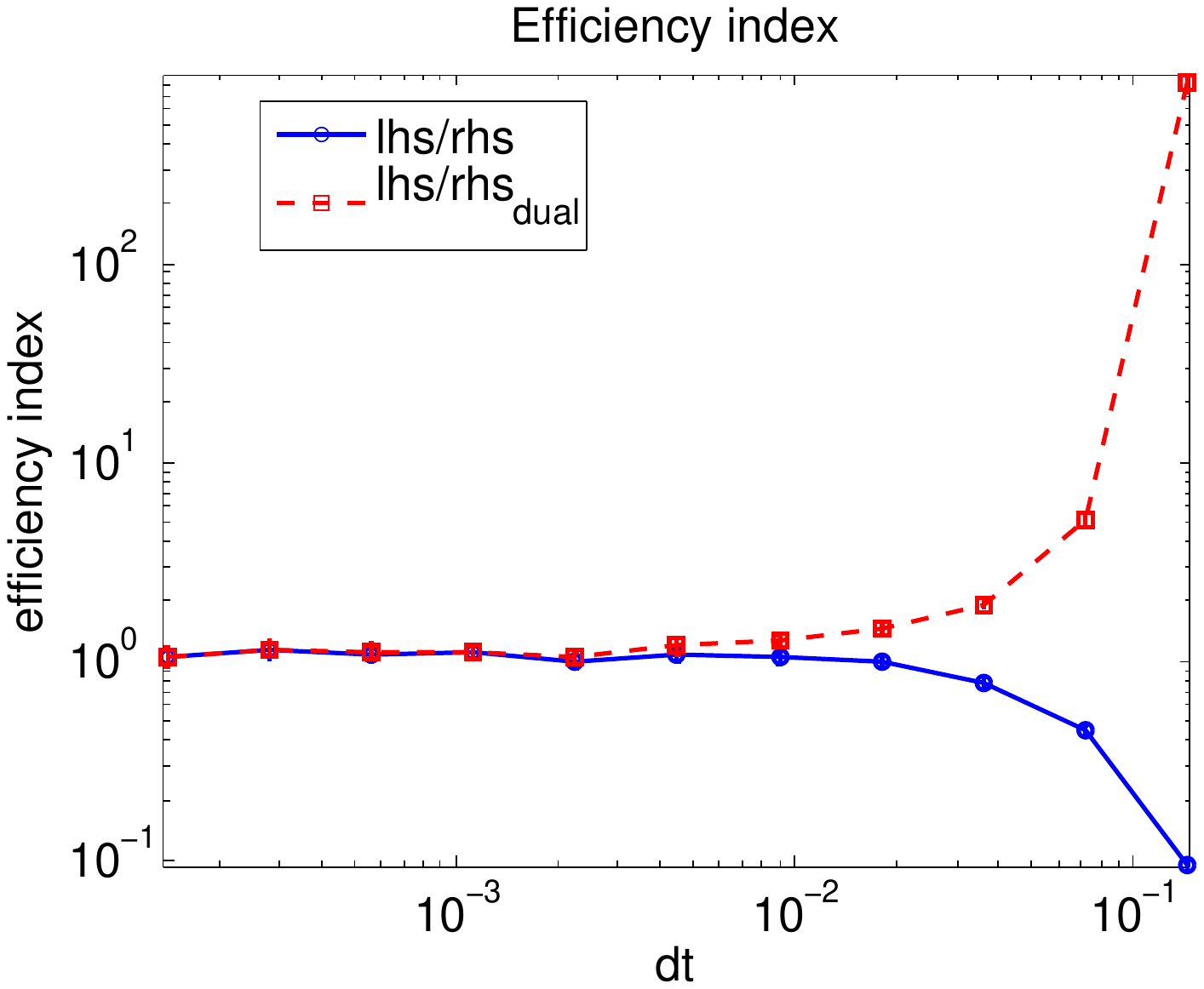}}
  \caption{Reactive decay: Efficiency index for second moment of $X$.}
  \label{fig:decay_efficiency_mom2}
\end{figure}
\begin{figure}[hbpt]
  \centering
  \subfigure[Example 1]{
    \includegraphics*[width=0.45\textwidth,viewport=70 220 500 580]
    {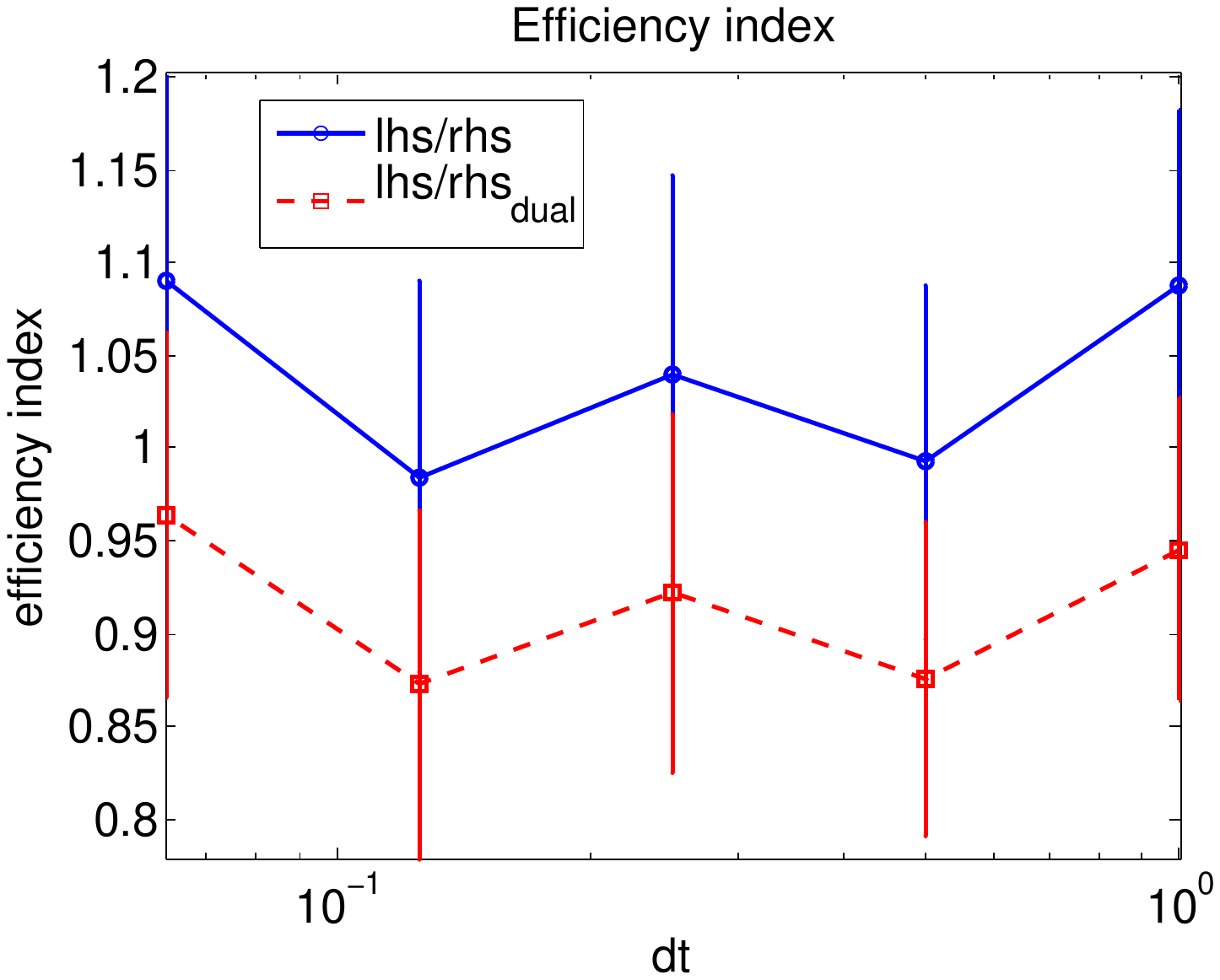}}
  \subfigure[Example 2]{
    \includegraphics*[width=0.45\textwidth,viewport=70 220 500 580]
    {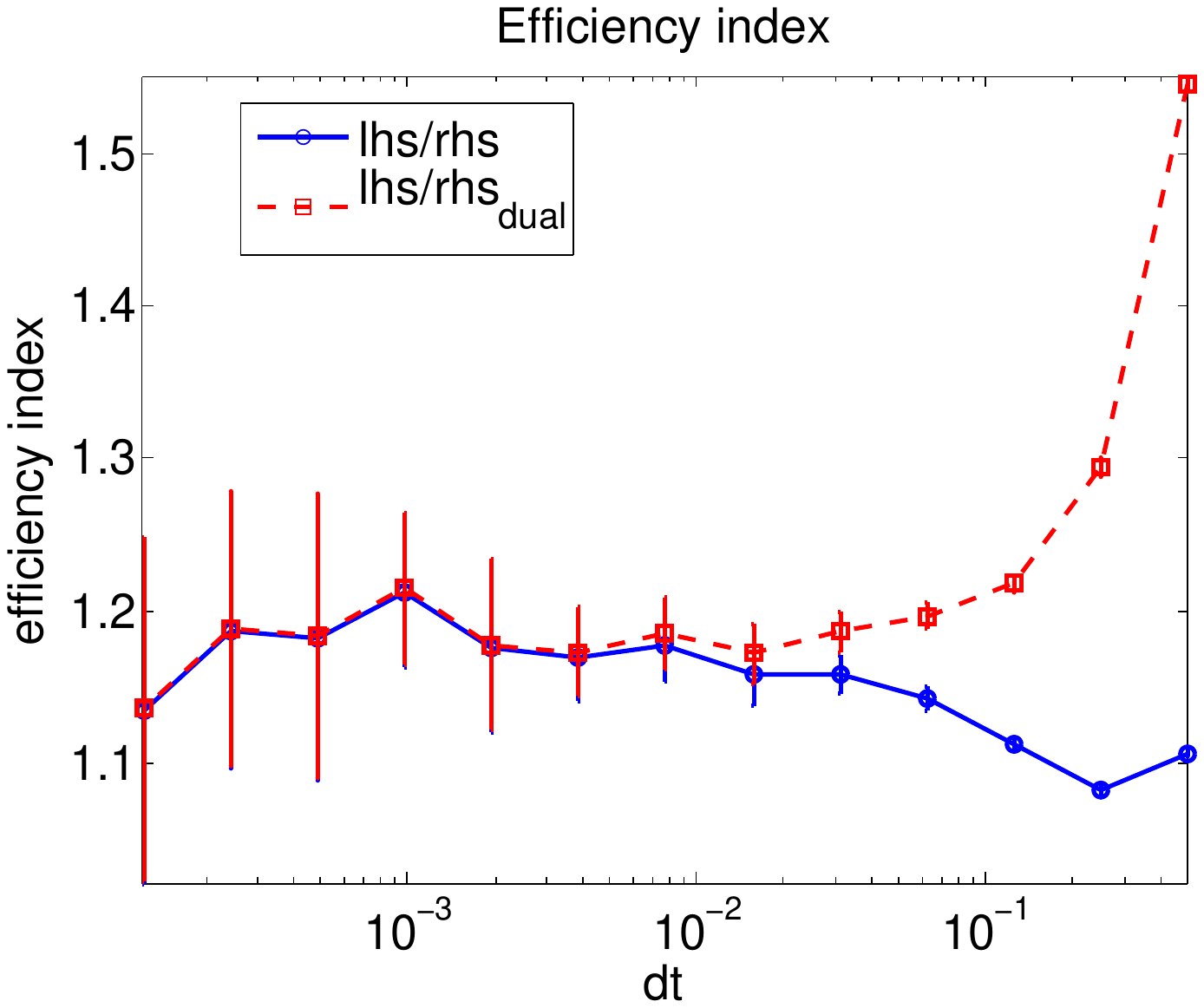}}
  \subfigure[Example 3]{
    \includegraphics[width=0.45\textwidth,viewport=70 220 500 580]
    {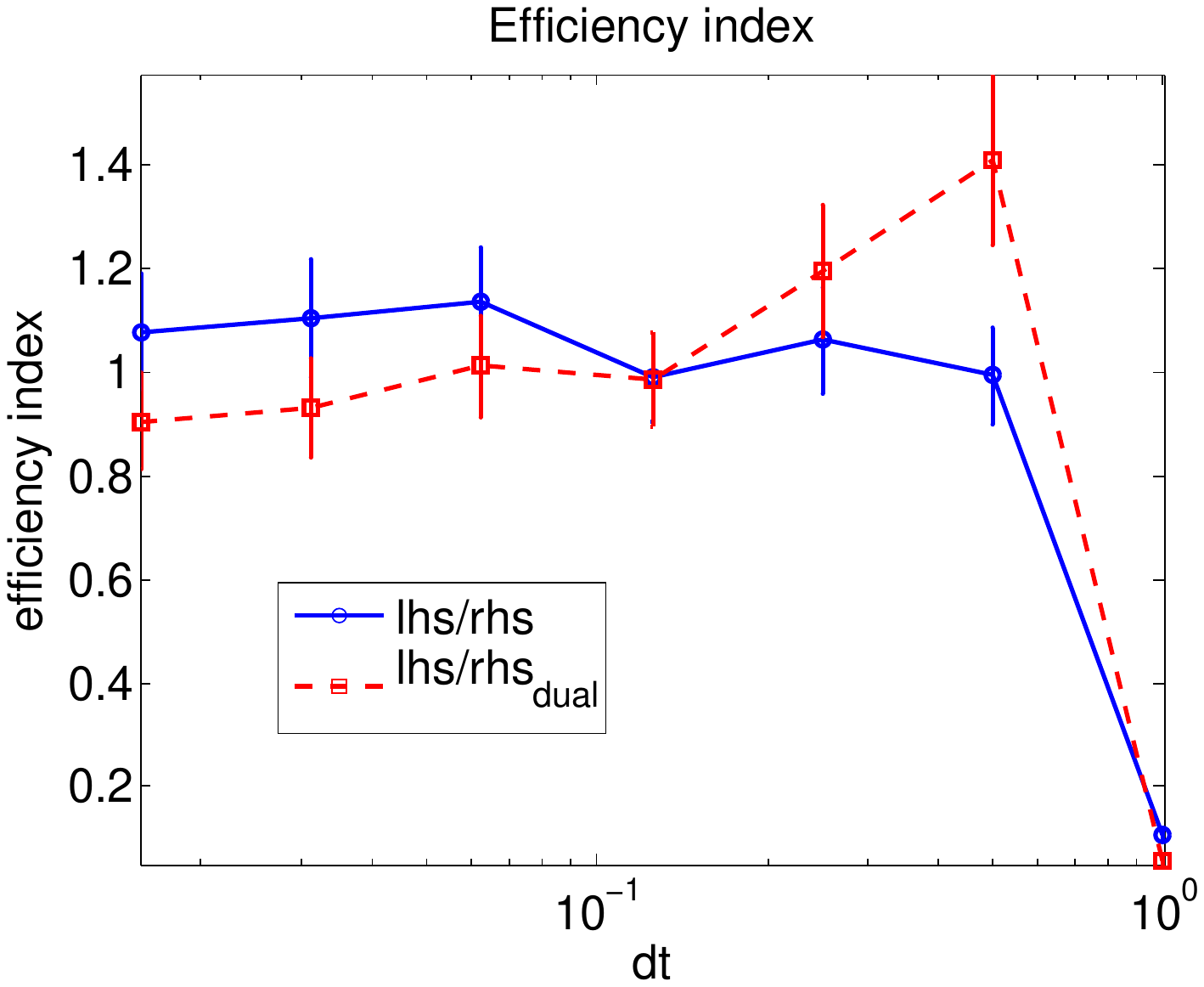}}
  \subfigure[Example 4]{
    \includegraphics[width=0.45\textwidth,viewport=70 220 500 580]
    {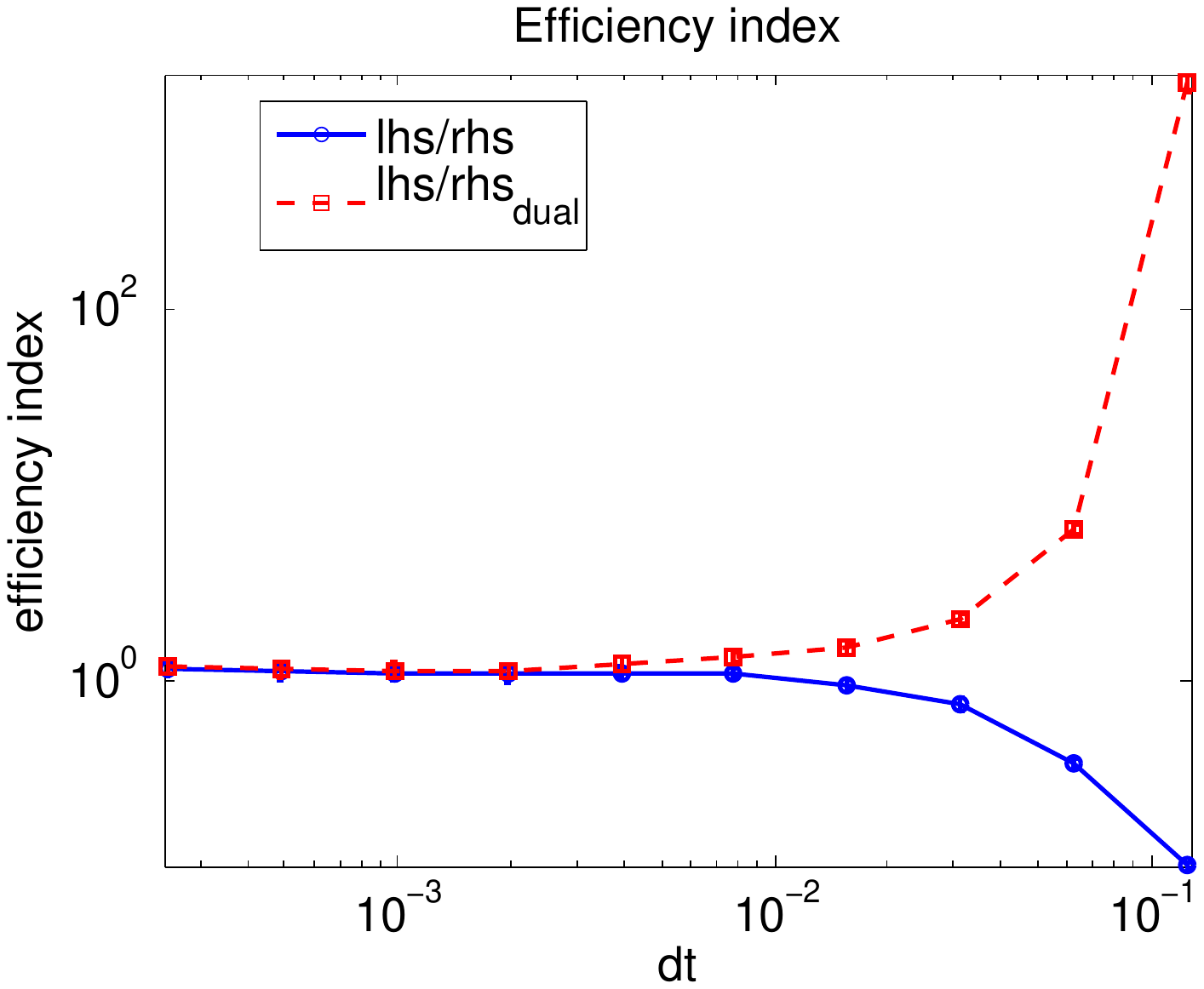}}
  \caption{Reactive decay: Efficiency index for third moment of $X$.}
  \label{fig:decay_efficiency_mom3}
\end{figure}

\begin{table}[hbpt]
  \centering
  \begin{tabular}{c|cccc}
    Example                 & 1  & 2    & 3    & 4 \\
    \hline
    Number of realizations  & 7.5$\cdot 10^4$ & 4.0$\cdot 10^1$
    & 1.3$\cdot 10^5$ & 3.6$\cdot 10^2$\\
    Current work            & 17 & 4097 & 513  & 11265\\ 
    Optimal work            & 16 & 4096 & 512  & 10866\\
    Uniform work            & 16 & 4096 & 512  & 10868\\
    Optimal work using dual & 16 & 4096 & 512  & 10867\\
    Uniform work using dual & 16 & 4096 & 512  & 10869
  \end{tabular}
  \caption{Current and estimated work for first moment. Values
    corresponding to smallest $\tau$ in 
    Figure \ref{fig:decay_convergence_mom1}.
    Two error densities are here used: one using the dual as in
    \eqref{eq:error_indicator} and one using the derivative of the 
    true value function.}
  \label{tab:decay_steps}
\end{table}

\subsection{Unstable dimer}
This stiff model was used in \eg{}
\cite{anderson,gillespie_leap_check} and has four reactions and three
species. The reactions are
\begin{equation*}
  \begin{aligned}
    X_1 &\to 0, & X_2 &\to 2X_1,\\
    2X_1 &\to X_2, & X_2 &\to X_3,
  \end{aligned}
\end{equation*}
described by the stoichiometric matrix and propensity function
\begin{equation*}
  \nu =
  \begin{pmatrix}
    -1 & -2  & 2 & 0\\
    0  & 1 &  -1 & -1\\
    0  &  0 &  0 &  1
  \end{pmatrix}
  \quad
  \text{and}
  \quad
  a(X) = 
  \begin{pmatrix}
    X_1 \\ 0.001 X_1 (X_1-1) \\ 0.5X_2 \\  0.04X_2
  \end{pmatrix},
\end{equation*}
respectively. In Figure \ref{fig:dimer_realizations}, where a few
realizations of the path $X_t:=(X_1,X_2,X_3)(t)$ are shown (in a
log-lin scale), it can be noted that there is a large difference in
time-scales; during a very short time most of the $X_1$-molecules will turn
into $X_2$-molecules. This difference in time scales has several
consequences: the step size during the transient phase may have a big
effect on the error, and the tau-leap method will cause negative
populations unless the step size is adjusted accordingly.

We here let $g(x)=x_1+x_2+x_3$, and the convergence of the
corresponding error and efficiency index can be seen in Figure
\ref{fig:dimer_convergence}. As in the previous example we see that
the error decreases linearly with $\tau$, as expected, with error
estimates being close to each other.

From Figure \ref{fig:dimer_indicators} we see that the error
density here play a big role, and that the optimal time stepping is
to choose very small time steps during the transient phase.  To see if
choosing a pre-leap check as in \cite{gillespie_leap_size} will give a
similar result we apply the leap-size condition \eqref{eq:leap_size}
with $\epsilon = 0.05$, which turns out to give almost the same error
density and proposed time steps.
In Table \ref{tab:dimer_steps1} a comparison between the current and
estimated work shows potential for great improvement using adaptive
time stepping.


\begin{figure}[hbpt]
  \centering
  \includegraphics*[width=0.45\textwidth,viewport=80 230 500 580]
  {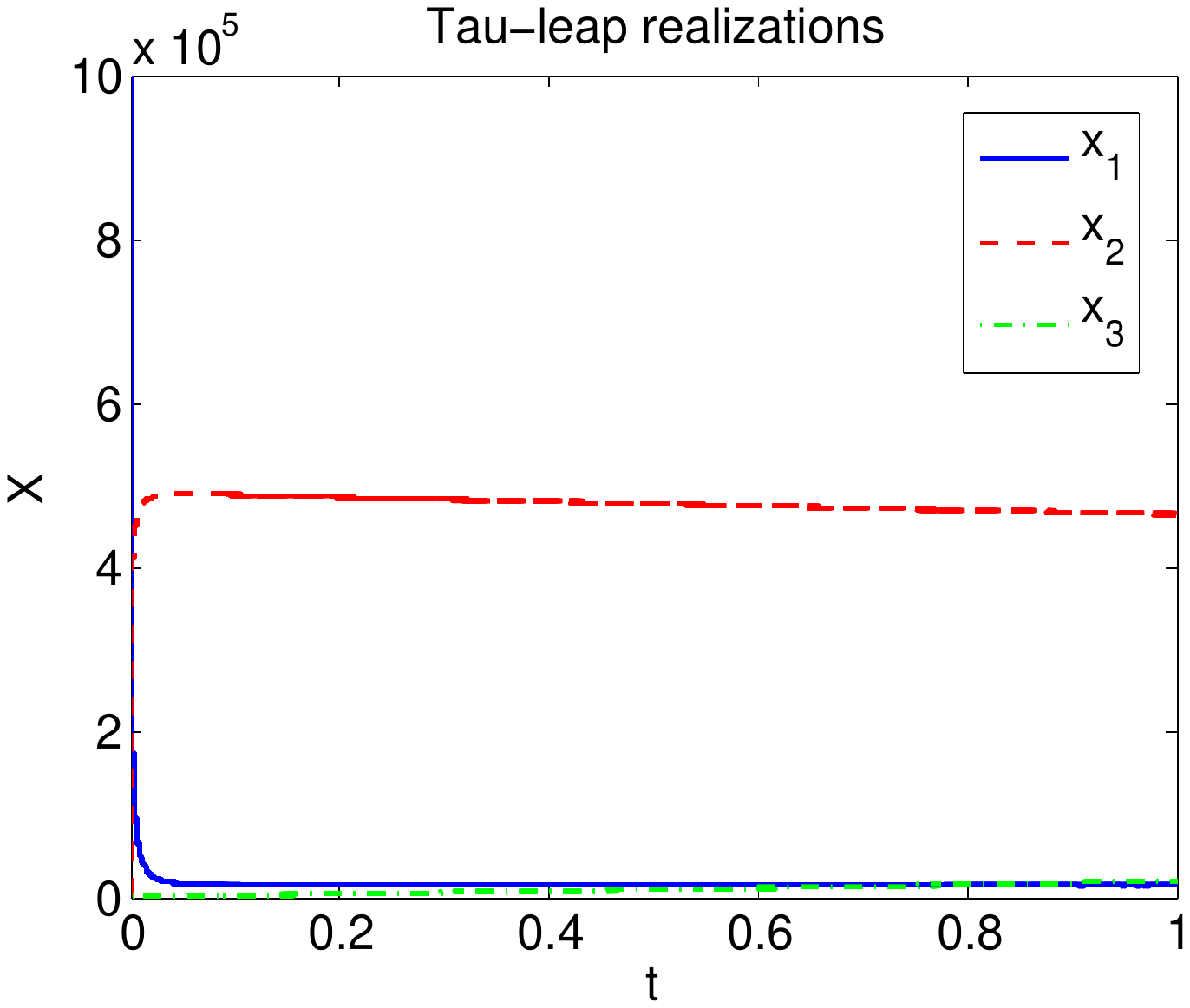}
  \includegraphics*[width=0.45\textwidth,viewport=80 230 500 580]
  {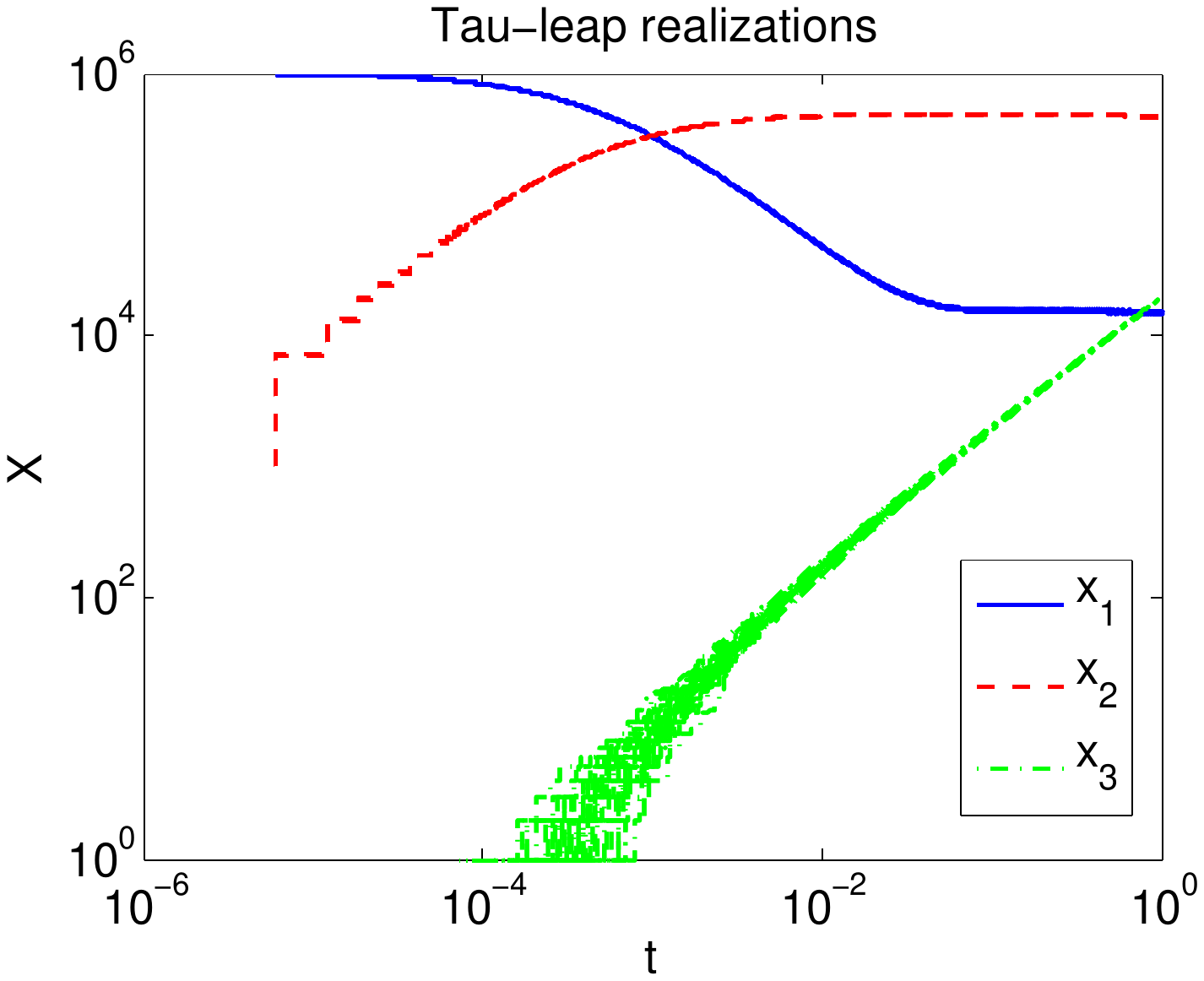}
  \caption{Realizations for unstable dimer shown in different axis scales.}
  \label{fig:dimer_realizations}
\end{figure}

\begin{figure}[hbpt]
  \centering
  \subfigure[Convergence]{
    \includegraphics*[width=0.45\textwidth,viewport=80 230 500 580]
    {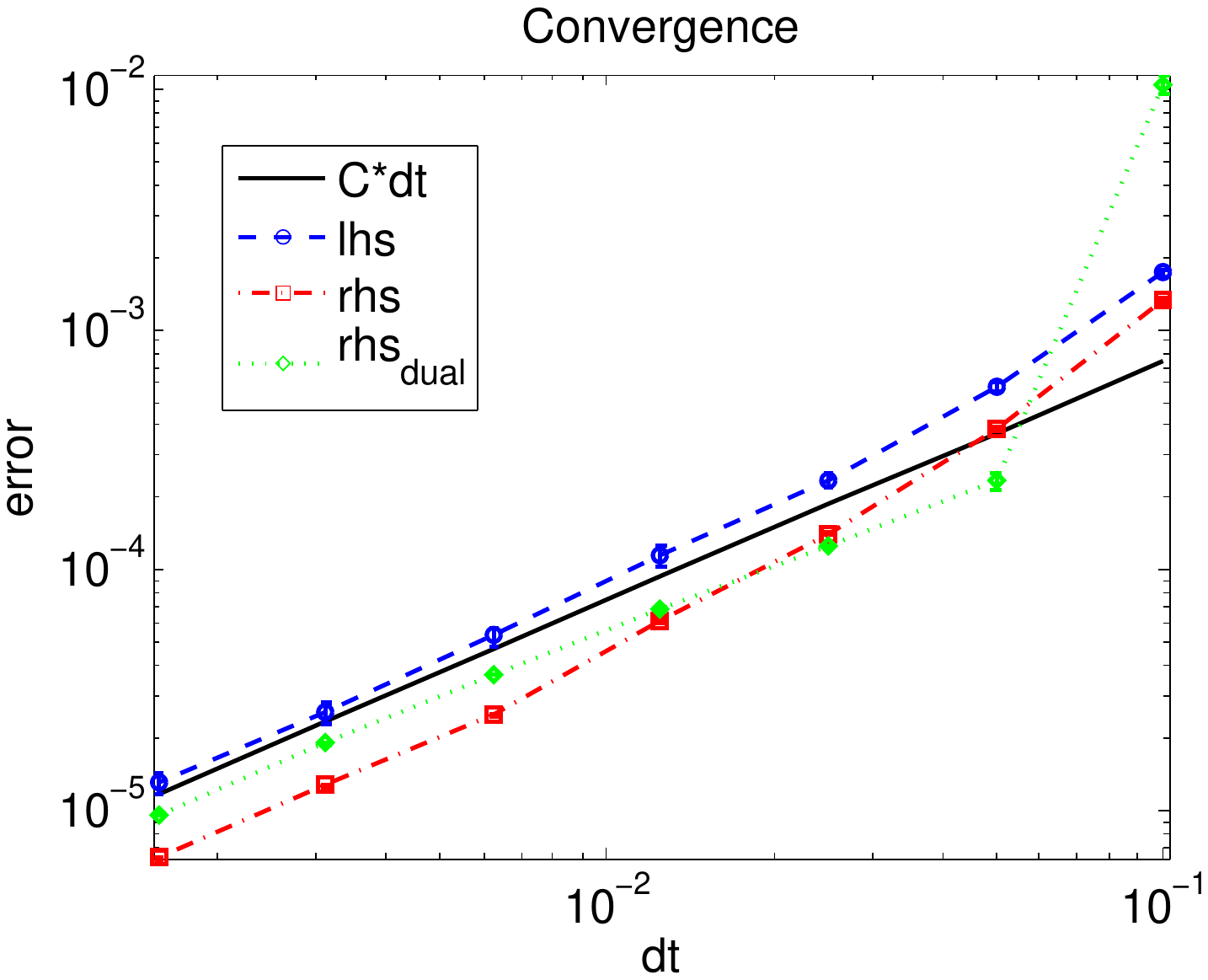}}
  \subfigure[Efficiency index]{
    \includegraphics*[width=0.45\textwidth,viewport=80 230 500 580]
    {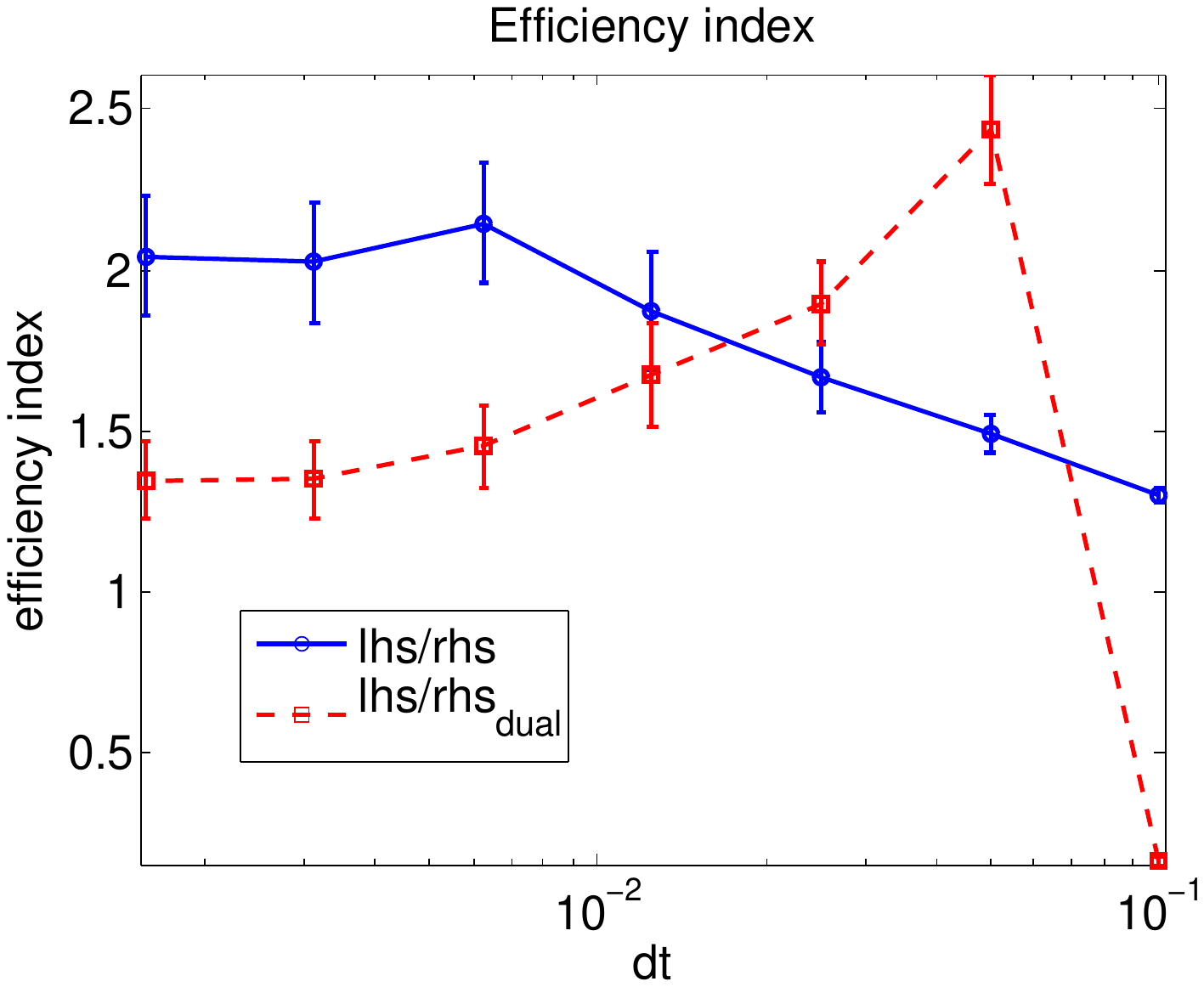}}
  \caption{Unstable dimer: Convergence and efficiency index.}
  \label{fig:dimer_convergence}
\end{figure}


\begin{figure}[hbpt]
  \centering
  \subfigure[Error densities]{
    \includegraphics*[width=0.45\textwidth,viewport=0 100 600 650]
    {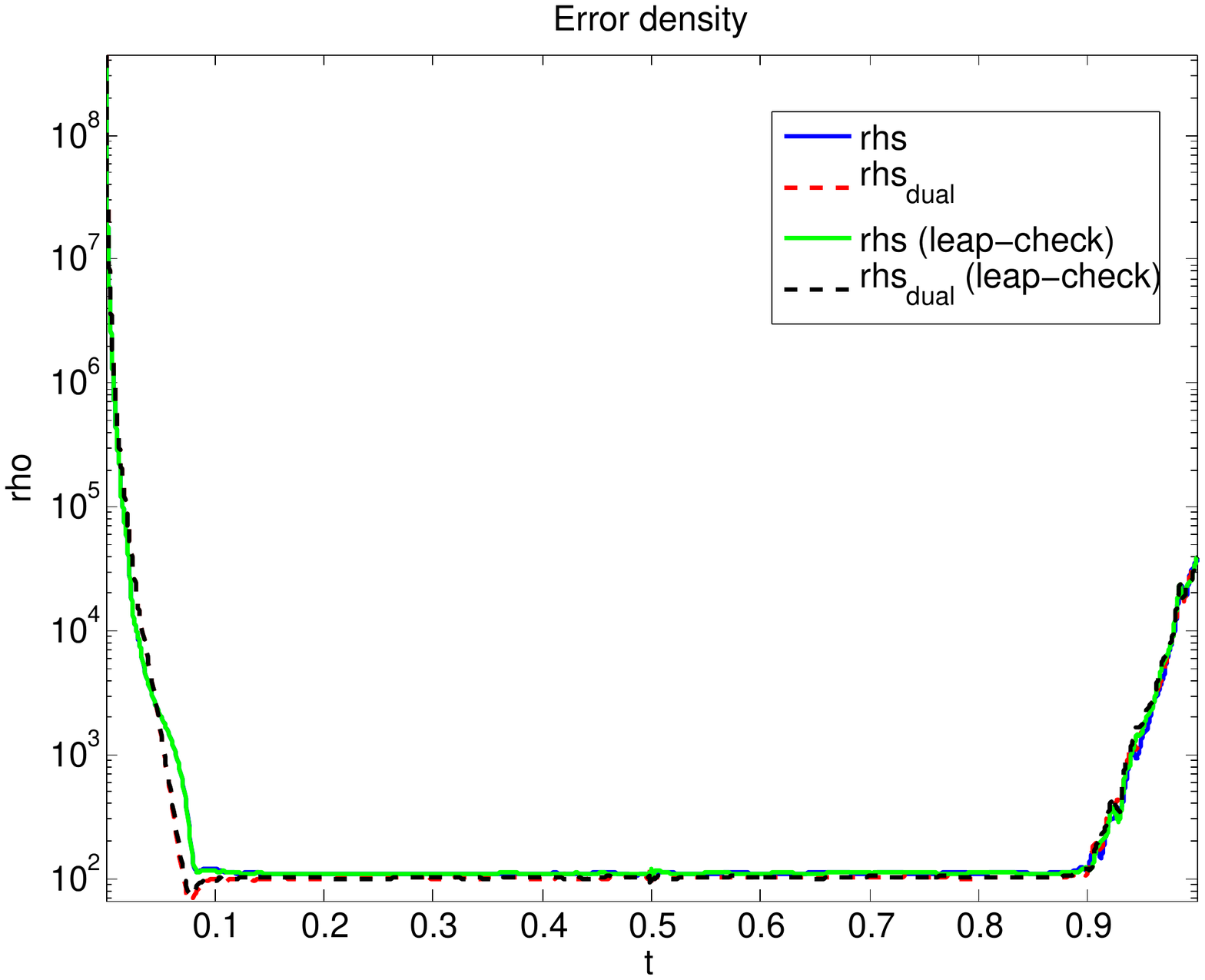}}
  \subfigure[Step sizes]{
    \includegraphics*[width=0.45\textwidth,viewport=0 100 600 650]
    {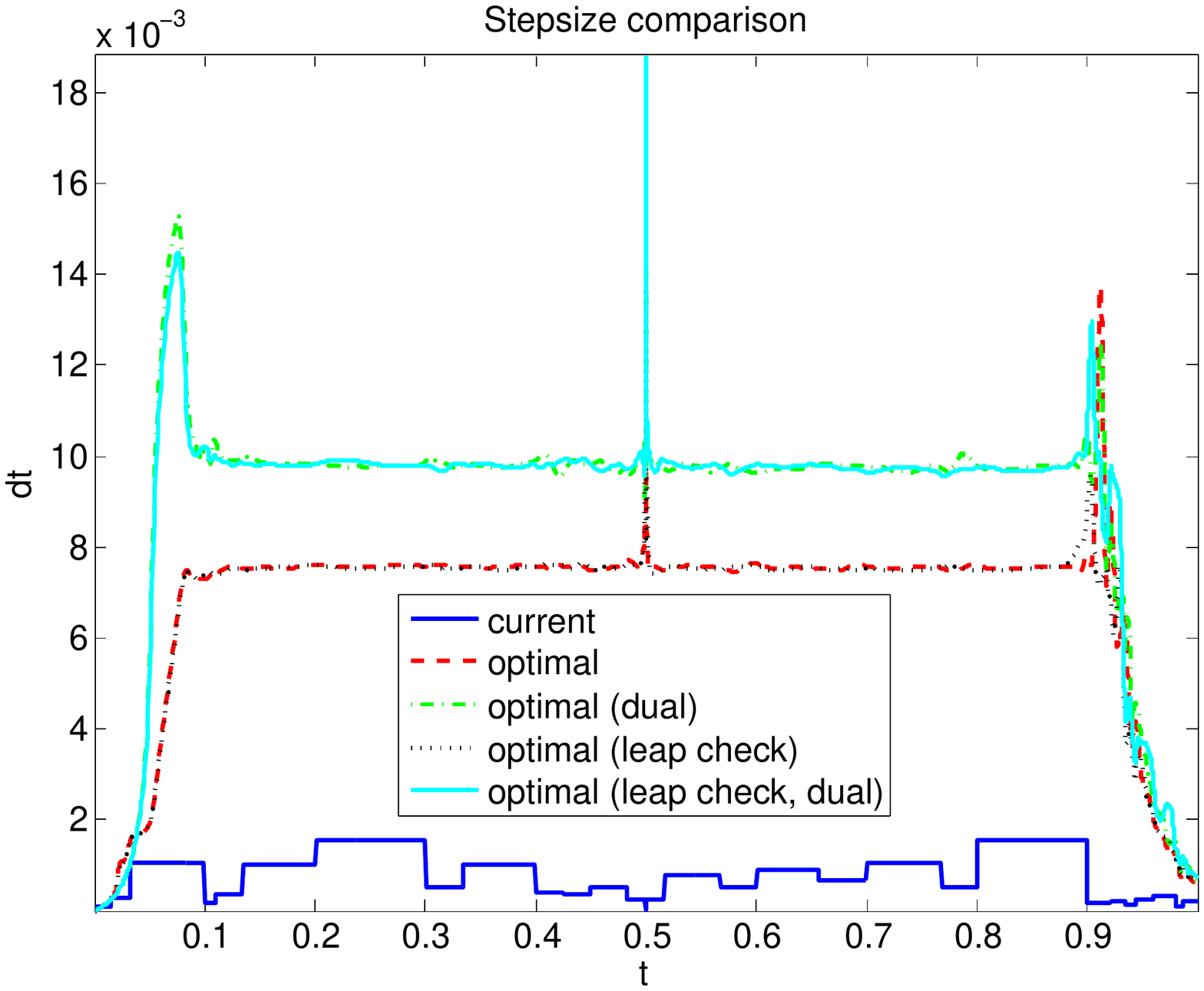}}
  \caption{Unstable dimer: Error densities and step sizes.}
  \label{fig:dimer_indicators}
\end{figure}


\begin{table}[hbpt]
  \centering
  \begin{tabular}{c|cccc}
    Leap-check & No & Yes\\
    \hline
    Current work & 2628 & 2637\\ 
    Optimal work & 573 & 575\\
    Uniform work & 40480 & 40501\\
    Optimal work using dual & 529 & 531\\
    Uniform work using dual & 42362 & 42313    
  \end{tabular}
  \caption{Unstable dimer: Current and estimated work for 880
    realizations. Values
    corresponding to smallest $\tau$ in 
    Figure \ref{fig:dimer_convergence}.
  }
  \label{tab:dimer_steps1}
\end{table}


\section{Conclusions}
We have in this work shown a weak global error representation for the
tau-leap method that can be accurately approximated by a computable leading
order term using a discrete dual weighted propensity residual, see
Lemma \ref{lem:tau_exp_error_rep} and Theorem
\ref{thm:aposteriori}. This type of \emph{a posteriori} error
estimates, using discrete dual functions, are the first for the
tau-leap method and are important tools for the ongoing work on developing
efficient adaptive time stepping algorithms, see \eg{}
\cite{mordecki,msst}.
Also, we have here shown an \emph{a priori} estimate on the relative
error of the tau-leap method, that is of order $\tau$ independently of the
number of particles in the system, see Theorem
\ref{thm:rel_error_bnd}.

Our results are based on an error representation using the value
function for the corresponding Kolmogorov backward equation, which for
jump processes is defined on a discrete lattice. Using extensions from
lattices onto real values and stochastic representations, we develop
weighted estimators for the value function and its derivatives, see
Lemma \ref{lem:cont_ext} and Lemma \ref{lem:growth} respectively.
The weighted estimators developed here give \emph{polynomial} bounds
on the value function and its derivatives and improve similar
$L$-infinity bounds in \cite{katsoulakis} that are \emph{exponential}
in the propensity function.


\bibliographystyle{plain}
\bibliography{../references}

\end{document}